\newcommand{\Uq}{\mathcal{U}_q(\mathfrak{g})}
\newcommand{\DeltaL}{\Delta^{(L)}}
\newtheorem{theorem}{Theorem}[section]
\newtheorem{proposition}[theorem]{Proposition}
\newtheorem{lemma}[theorem]{Lemma}
\newtheorem{corollary}[theorem]{Corollary}
\newtheorem{definition}[theorem]{Definition}
\title{Stochastic duality of ASEP with two particle types via symmetry of quantum groups of rank two}
\author{Jeffrey Kuan }
\begin{document}

\maketitle

\abstract{We study two generalizations of the asymmetric simple exclusion process with two types of particles. Particles of type $1$ can jump over particles of type $2$, while particles of type $2$ can only influence the jump rates of particles of type $1$. We prove that the processes are self--dual and explicitly write the duality function. As an application, an expression for the $r$--th moment of the exponentiated current is written in terms of $r$--particle evolution.

The construction and proofs of duality are accomplished using symmetry of the quantum groups $\mathcal{U}_q(\mathfrak{gl}_3)$ and $\mathcal{U}_q(\mathfrak{sp}_4)$, with each node in the Dynkin diagram corresponding to a particle type, and the number of edges corresponding to the jump rates. 
}

\section{Introduction}
The asymmetric simple exclusion process (ASEP) is a widely studied model in mathematics and physics. Particles occupy a one--dimensional lattice, with at most one particle at each site. The particles jump to neighboring sites asymmetrically, meaning that particles will drift to either the right of the left. If the particle jumps to an occupied site, the jump is blocked. 

In \cite{BS0},\cite{CS} and \cite{TW}, there is additionally a second--class particle. This particle jumps according to the same rule as ASEP. However, if a first--class particle attempts to jump to a site occupied by a second--class particle, the particles switch positions. If a second--class particle attempts to jump to a site occupied by a first--class particle, the jump is blocked. Observe that the second--class particles do not affect the first--class particles, or in other words, the projection to the first--class particles is Markov.

This paper will introduce so-called ``semi--second--class'' particles. These are particles which can not jump over the first--class particles; however, their presence can influence the jump rates of adjacent first--class particles. Thus, the projection to first--class--particles is \textit{not} Markov. These particles will also be called ``type 1'' and ``type 2'' particles. 

Two particular sets of values for the jump rates will be studied in detail. In these two cases, we will show that the processes are self--dual and explicitly write the duality function. The duality is proved using symmetry of the rank two quantum groups $\mathcal{U}_q(\mathfrak{gl}_3)$ and $\mathcal{U}_q(\mathfrak{sp}_4)$. The use of algebra symmetries to prove duality has a well--established history (e.g. \cite{SS},\cite{S},\cite{BS1},\cite{IS}). The proofs in this work follow the method laid out in \cite{CGRS}. Recent work \cite{BCS},\cite{CP} has also developed proofs for duality using more ``direct'' (that is, without algebra) methods. It has also been previously known that ASEP with second--class particles is integrable (e.g. \cite{AB}) and satisfies $\mathcal{U}_q(\mathfrak{gl}_3)$ symmetry \cite{AR}, but the explicit representations had not been constructed. Similar models have also been shown to be integrable (e.g. \cite{C},\cite{DE}).

The remainder of this paper is organized as follows: section \ref{Overview} gives an explicit description of the processes, states the duality results as well as an application, and state the quantum group symmetry. Section \ref{Central Element} reviews the background on quantum groups and constructs a central element necessary for the proof. Section \ref{C2} finishes the proofs for the $\mathcal{U}_q(\mathfrak{sp}_4)$ case, and section \ref{A2} finishes the proofs for the $\mathcal{U}_q(\mathfrak{gl}_3)$ case. 

During the writing of this paper, another paper \cite{BS} was posted to arXiv with similar results. That paper studies the process arising from $\mathcal{U}_q(\mathfrak{gl}_3)$ symmetry and finds a duality function similar to the one presented here. The approach is different in that it uses the Perk--Schultz quantum spin chain \cite{PS} to construct the representations, rather than explicitly constructing a central element. That paper also explicitly constructs all invariant measures and proves an interesting sum rule for the duality functions, neither of which are addressed here.

\textbf{Acknowledgments}. The author would like to thank Alexei Borodin and Ivan Corwin for helpful conversations. The author was partially supported by a National Science Foundation Graduate Research Fellowship.

\section{Overview}\label{Overview}

\subsection{Description}
Consider a one--dimensional lattice. Each lattice site has three possible states: either empty, occupied by a first--class particle, or occupied by a semi--second--class particle. Describe a particle configuration by $ \xi= \{\xi_i\}$ where $\xi_i\in \{0,1,2\}$ for each lattice site $i$, corresponding to an empty site, occupation by a first-class particle, and occupation by a semi--second--class particle, respectively. 

Each particle has two independent exponential clocks, one for left jumps and one for right jumps. The rate of the left clock depends on the state of the site to the left of the particle, and the rate of the right clock depends on the state of the site to the right of the particle. Let $L(i,j)$ denote the rate of the left clock of the $i$--th class particle when the site to the left has state $j$, and similarly denote $R(i,j)$. 

The particles interact according to the following rules: If a first--class particle attempts to jump to a site occupied by a semi--second--class particle, then the particles switch positions. If a semi--second--class particle attempts to jump to a site occupied by a first--class particle, the jump is blocked.  This implies that 
$$
L(2,1)=R(2,1)=0.
$$
If a first--class particle attempts to jump to a site occupied by another first--class particle, then the jump is blocked. The same holds for the semi--second--class particle. This means that
$$
L(1,1)=L(2,2)=R(2,2)=R(1,1)=0.
$$

This leaves six remaining jump rates, $L(1,0),R(1,0),L(2,0),R(2,0),L(1,2),R(1,2)$. Observe that if
$$
L(1,0)=L(1,2) , \quad R(1,0)=R(1,2), 
$$
then the first class particles evolve independently of the semi--second--class particles. In other words, the behavior of the first--class particles is Markov. In this case, the semi--second--class particles have been described in the literature as second--class particles (see e.g. \cite{S,TW}). In general, however, the semi--second--class particles still affect the jump rates of the first--class particles, even if they can not jump over them. Also observe that if the six jump rates are all multiplied by the same positive constant, then this corresponds to rescaling the time, and hence has no effect on the interaction of the particles. 

In this paper, we will consider two particular sets of values for the jump rates. The first is when
$$
L(1,0)=L(2,0)=L(1,2)=1, \quad R(1,0)=R(2,0)=R(1,2)=q^{-2}.
$$
This is called $\textit{spin } 1/2 \textit{ type } A_2 \textit{ ASEP}$, or ASEP with second--class particles. Here, the asymmetry parameter is $q$ for all particles. The second set of values for the jump rates is when
$$
L(1,0)=L(2,0)=1, \quad L(1,2)=a, \quad R(1,0)=R(2,0)=q^{-2}, \quad R(1,2)=aq^{-4},
$$
where $a$ solves $(q^{-4}+q^6)a=q^2(q^2+q^{-2})^2$. This is called $\textit{spin } 1/2 \textit{ type } C_2 \textit{ ASEP }$. In other words, the asymmetry parameter is $q$ for particles of type $1$ and $2$, and $q^2$ when particles of type $1$ and type $2$ interact. The reasons for the names will become clear later in the paper.

\subsection{Duality results}
Let us review the definition of duality. 

\begin{definition}
Suppose that  $X(t)$ and $Y(t)$ are Markov processes on state spaces $X$ and $Y$ respectively. Given a function $D$ on $X\times Y$, let $\mathcal{S}_D\subseteq X\times Y$ be the set of all $(x,y)$ such that
$$
\mathbb{E}_x[D(x(t),y)] = \mathbb{E}_y[D(x,y(t))],
$$
where on the left hand side, the process $x(t)$ starts at $x(0)=x$, and on the right hand side, the process $y(t)$ starts at $y(0)=y$. If $\mathcal{S}_D = X\times Y$, then we say that $X(t)$ and $Y(t)$ are dual with respect to $D(x,y)$. If furthermore, $X(t)$ and $Y(t)$ are the same process, then we say that $X(t)$ is self--dual.
\end{definition}

In order to write the explicit formula for the duality functions, define 
\begin{align*}
N^L_i(\eta) &= \sum_{j=1}^{i-1} 1_{\{\eta_j\neq 0\}}, \quad \quad \tilde{N}^L_i(\eta) = \sum_{j=1}^{i-1} 1_{\{\eta_j = 1\}}, \\
N^R_i(\eta) &= \sum_{j=i+1}^{L} 1_{\{\eta_j\neq 0\}}, \quad \quad \tilde{N}^R_i(\eta) = \sum_{j=i+1}^{L} 1_{\{\eta_j = 1\}}. \\
\end{align*}

\begin{theorem}\label{SDF} In the $A_2$ case, if $\xi$ is the particle configuration with particles of type $1$ at $n_1,\ldots,n_r$ and particles of type $2$ at $m_1,\ldots,m_{r'}$, then the function $D(\cdot,\cdot)$ defined by 
$$
D(\eta,\xi) = \prod_{s=1}^r 1_{\{\eta_{n_s}=1\}}q^{2\tilde{N}^R_{n_s}(\eta)+2n_s} \prod_{s'=1}^{r'} 1_{\{\eta_{m_{s'}}\neq 0\}}q^{2N^R_{m_{s'}}(\eta)+2m_{s'}} 
$$
is a self--duality function.
\end{theorem}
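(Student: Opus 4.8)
The plan is to prove Theorem~\ref{SDF} using the quantum group symmetry of $\mathcal{U}_q(\mathfrak{gl}_3)$, following the method of \cite{CGRS}. The general strategy for such proofs is: (1) realize the generator of the $A_2$ ASEP on a finite lattice of $L$ sites as (a conjugate of) the action of a specific central or Casimir-type element of $\mathcal{U}_q(\mathfrak{gl}_3)$ on the $L$-fold tensor product of the fundamental (three-dimensional) representation; (2) exhibit a symmetry operator --- typically built from a lowering operator in the coproduct $\DeltaL$ of a root vector, or from a ``twist'' by the $R$-matrix --- that commutes with this generator; (3) apply this symmetry operator to a ground state (or trivial duality function) to produce the claimed duality function $D(\eta,\xi)$, invoking the standard lemma (as in \cite{CGRS}) that if $A$ commutes with the generators $\mathcal{L}$ of two processes in the sense $\mathcal{L}_X A = A \mathcal{L}_Y^{T}$ (with respect to suitable reference measures), then the matrix elements of $A$ give a duality function.

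\textbf{Steps in order.} First, I would set up the finite-lattice process on sites $1,\dots,L$ and write its generator $\mathcal{L}$ as a sum of nearest-neighbor terms $\mathcal{L}_{i,i+1}$, each acting on $(\mathbb{C}^3)^{\otimes 2}$. Second, using the central element constructed in Section~\ref{Central Element}, I would identify $\mathcal{L}$ (up to diagonal conjugation by a function of the particle positions, which accounts for the $q^{2n_s}$, $q^{2m_{s'}}$ factors and the ``site-number'' weights) with the coproduct action of that central element; this is the place where the two specific choices of jump rates are forced, since only those make the process coincide with the quantum-group object. Third, I would take the appropriate root vector $E$ (or $F$) of $\mathcal{U}_q(\mathfrak{gl}_3)$ corresponding to the simple root attached to the type-1 node, form $\DeltaL(E^k)$ on the $L$-fold tensor product, and check --- using that $E$ commutes with the central element and using the co-commutativity up to the diagonal Cartan twist --- that $\DeltaL(E^k)$ intertwines $\mathcal{L}$ with itself after transposition. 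Fourth, I would compute the matrix elements of $\DeltaL(E^k)$ in the occupation basis: because $E$ acts on a single tensor factor by shifting $0 \to 1$ or $1 \to 2$ (depending on the chosen root vector), the $q$-binomial / $q$-commutator combinatorics of the iterated coproduct produces exactly the indicator functions $1_{\{\eta_{n_s}=1\}}$, $1_{\{\eta_{m_{s'}}\neq 0\}}$ together with the exponents $2\tilde N^R_{n_s}(\eta)$ and $2N^R_{m_{s'}}(\eta)$ coming from the Cartan elements $q^{H}$ appearing to the right in each coproduct factor.

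\textbf{Main obstacle.} The genuinely delicate step is the third one: verifying that $\DeltaL$ of the chosen root vector really is a symmetry of the \emph{conjugated} generator, i.e.\ bookkeeping the diagonal similarity transformation and the ground-state measure so that the abstract commutation $[\mathcal{L}, \DeltaL(E^k)]$-type relation translates into the precise self-duality identity $\mathbb{E}_\eta[D(\eta(t),\xi)] = \mathbb{E}_\xi[D(\eta,\xi(t))]$. This requires care because $\mathcal{U}_q(\mathfrak{gl}_3)$ is not cocommutative, so one must use the right root vector and possibly pass through the opposite coproduct or a reflection $i \mapsto L+1-i$ of the lattice; getting the left/right asymmetry of $q$ versus $q^{-2}$ to match the direction in which the Cartan factors accumulate is where sign/inversion errors are most likely. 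A secondary obstacle is checking the boundary behavior at sites $1$ and $L$ (the terms $q^{2n_s}$, $q^{2m_{s'}}$), which corresponds to the non-conservation of the ``current'' and must be handled by allowing $D$ to be an eigenfunction-type rather than strictly invariant object; I expect this to follow from the same central-element computation once the bulk case is settled. Routine $q$-binomial identities and the explicit small-rank representation theory I would cite from Sections~\ref{Central Element}--\ref{A2} rather than reprove.
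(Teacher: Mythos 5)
Your overall skeleton is the same as the paper's (the \cite{CGRS} recipe: central element, commuting symmetry, conjugation by the ground state), but the specific symmetry operator you propose cannot produce the duality function of Theorem \ref{SDF}. You take $\DeltaL(E^k)$ for a \emph{single} root vector $E$ attached to one node of the Dynkin diagram. In the fundamental representation of $\mathcal{U}_q(\mathfrak{gl}_3)$, $e_2$ creates a type--$2$ particle from an empty site while $e_1$ converts a type--$2$ particle into a type--$1$ particle; no single simple root vector does both. Since the dual configuration $\xi$ in Theorem \ref{SDF} carries type--$1$ particles at $n_1,\ldots,n_r$ \emph{and} type--$2$ particles at $m_1,\ldots,m_{r'}$ simultaneously, the matrix element of $\DeltaL(E^k)$ between the vacuum and the basis vector indexed by $\xi$ vanishes unless one of $r,r'$ is zero; you would only recover the two degenerate projections to ordinary ASEP mentioned after the theorem, not the full statement. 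The paper's symmetry is the product $S=\exp_{q^2}\bigl(\DeltaL e_2\bigr)\cdot\exp_{q^2}\bigl(\DeltaL e_1\bigr)$ of $q$--exponentials of \emph{both} coproducts, and the use of $q$--exponentials rather than raw powers $E^k$ is not cosmetic: their factorization (Proposition \ref{pseudofac}, quoted from \cite{CGRS}) into an ordered product of local factors of the form $1+k^{\otimes j}\otimes e\otimes 1^{\otimes L-1-j}$ is exactly what yields the product--over--sites formula with the exponents $2\tilde N^R_{n_s}(\eta)$ and $2N^R_{m_{s'}}(\eta)$ in Proposition \ref{ExplicitS}.

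On the other hand, the step you flag as the genuinely delicate one is immediate in this framework, and your worry about non--cocommutativity, opposite coproducts, and lattice reflection is misplaced. The operator $A^{(L)}$ built from the central element is symmetric for the standard inner product and commutes with $\DeltaL(u)$ for every $u$ (Lemma \ref{commutes}); hence for any such symmetry $S$ one has, with $D=G^{-1}SG^{-1}$, the one--line identity $\mathcal{L}D=G^{-1}A^{(L)}SG^{-1}=G^{-1}SG^{-1}\,GA^{(L)}G^{-1}=D\mathcal{L}^{*}$, which by Lemma \ref{DualityProof} is self--duality. The boundary factors $q^{2n_s}$, $q^{2m_{s'}}$ also require no separate eigenfunction or current argument: they are supplied by the second $G^{-1}$ acting in the $\xi$ variable, since the ground state evaluated at an occupied site $i$ carries a factor $q^{1-i}$. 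What actually consumes the work in the paper is the explicit evaluation of $S(\eta,\xi)$ and the bookkeeping reducing $G^{-1}SG^{-1}$ to the stated formula; your step four gestures at this, but the combinatorics only close up because of the two--factor $q$--exponential structure you did not invoke.
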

This function is similar to Proposition 2 of \cite{IS} or (3.12) of \cite{S}. Indeed, if $\xi$ only contains type $2$ particles, one recovers the self--duality function for the projection of type $A_2$ ASEP to the number of particles, which is still ASEP. If $\xi$ only contains type $1$ particles, one recovers the self--duality function for the projection of type $A_2$ ASEP to the type $1$ particles, which is again still ASEP.



In the $C_2$ case, we give two duality functions:
\begin{theorem}\label{C2Duality} 
(1) In the $C_2$ case, the function 
$$
D(\eta,\xi) = \prod_{i=1}^L \left( 1_{\{\xi_i = \eta_i=1\}}q^{2(i-1)} +  1_{\{\xi_i = \eta_i=2\}}q^{2(i-1 + N^L_i(\eta) + N^L_i(\xi))}  +  1_{\{\xi_i=1,\eta_i=2\}}q^{2(N_i^L(\eta) + i-1+ 2N_i^L(\xi)-\tilde{N}_i^L(\xi) )}\right)
$$
is a self--duality function.

(2) In the $C_2$ case, there is a function $D$ such that
$$
\mathcal{S}_D = \{0,1,2\}^L \times \{0,1\}^L.
$$ 
Explicitly, if for $n_1<\ldots<n_r,$  the particle configuration $\xi^{(n_1,\ldots,n_r)}$ is defined by 
$$
\xi^{(n_1,\ldots,n_r)}_i 
= \begin{cases}
1, i \in \{n_1,\ldots,n_r\} \\
0, i \notin \{n_1,\ldots,n_r\} 
\end{cases}
$$
then
$$
D(\eta,\xi^{(n_1,\ldots,n_r)}) = \prod_{s=1}^r 1_{\{\eta_{n_s}\neq 0\}} q^{ 2N_{n_s}^R(\eta)+2n_s}
$$
\end{theorem}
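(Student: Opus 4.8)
The plan is to follow the program of \cite{CGRS}. From the earlier sections one has the $C_2$ generator $\mathcal{L}$ related by a diagonal ground--state conjugation to a quantum Hamiltonian $H=\sum_{i}H_{i,i+1}$ acting on $V^{\otimes L}$, where $V$ is the three--dimensional $\mathcal{U}_q(\mathfrak{sp}_4)$--module with basis $\{|0\rangle,|1\rangle,|2\rangle\}$ constructed earlier and $H_{i,i+1}$ is built from the central element of Section~\ref{Central Element}; consequently $H$ commutes with $\DeltaL(x)$ for every $x\in\mathcal{U}_q(\mathfrak{sp}_4)$. At the level of matrices, ``$D$ is a self--duality function'' means $\mathcal{L}D=D\mathcal{L}^{T}$, and the mechanism is the elementary fact that if $Q$ commutes with $\mathcal{L}$ and $D_0$ is a self--duality function then $QD_0$ (acting $Q$ on the first slot) is again one. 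So the plan is: (i) produce a simple ``ground--state'' self--duality $D_0$; (ii) identify the correct symmetry $Q\in\DeltaL(\mathcal{U}_q(\mathfrak{sp}_4))$; (iii) compute $QD_0$ and match it with the displayed formulas. The probabilistic assertion in the Definition then follows because $\mathcal{L}$ is a bona fide Markov generator and the representation is unitarizable, both established earlier.

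For part~(1) I would take $D_0(\eta,\xi)=\delta_{\eta,\xi}\prod_i c(\eta_i)$ diagonal, with on--site weights $c$ chosen so that $D_0$ intertwines $\mathcal{L}$ with $\mathcal{L}^{T}$ (the reversibility/Cartan part of the structure, immediate once $H$ is seen to preserve weight spaces). For the symmetry I would take $Q=\DeltaL(\Gamma)$, where $\Gamma$ is an ordered product, over the four positive roots $\alpha_1,\alpha_2,\alpha_1+\alpha_2,2\alpha_1+\alpha_2$ of $\mathfrak{sp}_4$, of divided powers of Cartan--twisted root vectors $E_\beta$, the analogue of the single twisted $E$ used for ASEP. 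Iterating the coproduct turns $\DeltaL(E_\beta)$ into a sum over sites carrying a $q$--weight that counts occupied sites on one side, and the $q$--binomial coefficients produced when equal terms are collected are exactly what generate the combinations $N_i^L(\eta),\tilde N_i^L(\eta),N_i^L(\xi),\tilde N_i^L(\xi)$ in the exponents; the three summands in the displayed product are the only on--site pairs $(\xi_i,\eta_i)\in\{(1,1),(2,2),(1,2)\}$ for which $\langle\eta|Q|\xi\rangle\neq0$, and the asymmetric combination $2N_i^L(\xi)-\tilde N_i^L(\xi)$ reflects that the basis vectors labelled $1$ and $2$ carry different Cartan weights, so the twisted $E_\beta$'s weight them differently.

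For part~(2) I would use that the map $\pi\colon\{0,1,2\}^L\to\{0,1\}^L$ replacing every $2$ by $1$ intertwines the $C_2$ process with ordinary ASEP with left rate $1$ and right rate $q^{-2}$: a particle of either type jumps into an empty site at rate $1$ (resp.\ $q^{-2}$), while every remaining transition is either blocked or a type--exchanging swap that leaves the occupation pattern unchanged, so the occupation process is Markov and is exactly ASEP. On the Hopf--algebra side this corresponds to restricting to the rank--one sub--Hopf--algebra $\mathcal{U}_q(\mathfrak{sl}_2)\hookrightarrow\mathcal{U}_q(\mathfrak{sp}_4)$ attached to the node for type~$1$ particles, whose symmetry produces a duality function depending on $\eta$ only through $\pi(\eta)$. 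Composing this intertwining with the Sch\"utz self--duality of ASEP (the $\mathfrak{sl}_2$ instance of the construction above, $Q=\tfrac{1}{[r]_q!}\DeltaL(\tilde E)^{r}$ acting on the diagonal $D_0$) gives $D(\eta,\xi^{(n_1,\ldots,n_r)})=D_{\mathrm{ASEP}}(\pi(\eta),\xi^{(n_1,\ldots,n_r)})=\prod_{s}1_{\{\eta_{n_s}\neq0\}}q^{2N_{n_s}^R(\eta)+2n_s}$, matching the statement (and coinciding with the type--$2$--only specialization of Theorem~\ref{SDF}). Finally I would verify both inclusions defining $\mathcal{S}_D$: the relation holds whenever $\xi\in\{0,1\}^L$ by the argument just given, and it fails as soon as $\xi$ contains a type--$2$ particle, since then the type--$1$ particles of $\xi$ no longer move like ASEP particles ($L(1,2)\neq L(1,0)$ and $R(1,2)\neq R(1,0)$), so $\mathcal{S}_D=\{0,1,2\}^L\times\{0,1\}^L$ exactly.

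I expect the main obstacle to be steps (ii)--(iii) of part~(1): in rank two the iterated coproducts of the various root vectors do not commute, so one must fix a PBW (Levendorskii--Soibelman) ordering of the product defining $Q$, keep track of all the $q$--commutation relations, and then carry out the lengthy combinatorial bookkeeping that extracts the precise exponents --- in particular the coefficient $2N_i^L(\xi)-\tilde N_i^L(\xi)$, which is the most delicate. A secondary difficulty is confirming that the ``cheap'' $D_0$ is genuinely a self--duality for the non--reversible $C_2$ dynamics rather than merely for a reversible restriction, and checking that the algebraic identity $\mathcal{L}D=D\mathcal{L}^{T}$ legitimately upgrades to the expectation identity in the Definition; for part~(2) the only subtle point is the exactness, in both directions, of the claimed $\mathcal{S}_D$.
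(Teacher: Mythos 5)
Your part (1) does not go through as written, for two concrete reasons. First, you take $V$ to be a three--dimensional module with basis $\{|0\rangle,|1\rangle,|2\rangle\}$; the fundamental representation of $\mathcal{U}_q(\mathfrak{sp}_4)$ is four--dimensional, and the fourth basis vector (a site carrying both a type $1$ and a type $2$ particle) is the whole source of difficulty in the $C_2$ case. The block $A_{(0,0)}$ of $\Delta(C)$ has negative off--diagonal entries, so the Markov generator is only obtained as a singular limit $\lim_{\epsilon\to 0}G_\epsilon^{-1}A^{(L)}G_\epsilon$ in which the doubly--occupied states are suppressed. Consequently the implication ``$Q$ commutes with $\mathcal{L}$ and $D_0$ is a self--duality $\Rightarrow$ $QD_0$ is a self--duality'' is \emph{not} automatic here: the matrix identity $\mathcal{L}_\epsilon D_\epsilon=D_\epsilon\mathcal{L}_\epsilon^*$ lives on the full $4^L$--dimensional space, and to descend to the $3^L$--state process one must check both that $\lim_{\epsilon\to 0}D_\epsilon$ is finite and that the intermediate sum over configurations containing a doubly--occupied site contributes nothing; this is exactly Lemma \ref{Proper} in the paper, i.e.\ the duality function must vanish on such configurations. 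Your proposal contains no mechanism for this. Second, your symmetry $Q$ is wrong for part (1): the paper uses only $S=\exp_{q^4}(\DeltaL e_2)$, a single simple root vector, which is why the on--site support is exactly $\{(1,1),(2,2),(1,2)\}$ ($e_2$ only converts a type $1$ into a type $2$). An ordered product of divided powers over all four positive roots of $\mathfrak{sp}_4$ would contain $e_1$ and the composite root vectors, hence would be supported on pairs such as $(\xi_i,\eta_i)=(0,1)$ and $(0,2)$ as well, and would create doubly--occupied sites (violating the vanishing condition above). So the claimed matching of $QD_0$ with the displayed formula cannot happen for your $Q$; you have reverse--engineered the support from the answer rather than derived it.

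Part (2) is essentially correct and is a genuinely different (and shorter) route than the paper's: you reduce to Sch\"utz's ASEP self--duality via the Markov projection $\pi$ onto occupation numbers, which is legitimate since in the $C_2$ case jumps onto empty sites occur at rates $1$ and $q^{-2}$ independently of type and all other moves preserve the occupation pattern. The paper instead derives the formula internally from the symmetry $S=\exp_{q^4}(\DeltaL e_2)\cdot\exp_{q^2}(\DeltaL e_1)$ (ordered so that, when $\xi$ has no type $2$ particles, no doubly--occupied site is created), which keeps the argument self--contained and exhibits $D$ as $G^{-1}SG^{-1}$; your route buys brevity at the cost of importing Sch\"utz's theorem. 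Your one--line justification that the relation \emph{fails} whenever $\xi$ contains a type $2$ particle is not a proof (the failure of your intertwining argument does not imply failure of the identity), but the paper does not supply more on the reverse inclusion either.
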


\noindent \textbf{Remark}. Theorem \ref{C2Duality}(2) can also be stated as ``spin $1/2$ type $C_2$ ASEP is dual to usual ASEP with respect to $D$.'' Also observe that the function $D(\cdot,\cdot)$ only detects the number, not the type, of particles. The projection of type $A_2$ and $C_2$ ASEP to particle occupation is simply the usual ASEP, and the duality function matches that from \cite{S}. The interest lies in that $D(\cdot,\cdot)$ can be constructed from the representation theory of $\mathcal{U}_q(\mathfrak{sp}_4)$, which will be seen below.
\subsection{Construction}\label{Construction}

In \cite{CGRS}, there is a general description of how to construct particle systems from quantum groups, as well as finding self--duality functions for these particle systems.  Let us review the idea in several steps.

The first step is to consider the quantum group $\mathcal{U}_q(\mathfrak{g})$ for some finite--dimensional simple Lie algebra $\mathfrak{g}$. Find an explicit central element $C\in \mathcal{U}_q(\mathfrak{n}_-)\mathcal{U}_q(\mathfrak{h}) \mathcal{U}_q(\mathfrak{n}_+)$.

Next, consider a finite--dimensional irreducible representation  $V$ of $\mathcal{U}_q(\mathfrak{g})$ with a basis $v_1,\ldots,v_d$ consisting of weight space vectors. If $v_1$ denotes the highest weight vector and $v_d$ denotes the lowest weight vector, then compute the value of $a$ for which $\Delta(C)(v_1\otimes v_1)=a v_1\otimes v_1$. Now compute the $d\times d$ matrix of $A:=\Delta(C-a)$ acting on $V\otimes V$ with respect to the basis $\{v_i\otimes v_j, 1\leq i,j\leq d\}$. Observe that since $C-a$ is still central. Assume that this matrix has non--positive diagonal entries, and non--negative off--diagonal entries (this will not always be true). 

Now consider the operator on $V^{\otimes L}$ defined by 
$$
\sum_{i=1}^{L-1} 1^{\otimes i-1} \otimes A \otimes 1^{\otimes L-i-1}.
$$
Suppose that we have a vector $g \in V^{\otimes L}$ such that $A^{(L)}g=0$. It is possible to find such a vector by applying elements of $\mathcal{U}_q(\mathfrak{n}_+)$ to the lowest weight vector $v_d\otimes \cdots \otimes v_d$ (in physics language, this is applying creation operators to the vacuum state).  Write $g$ in terms of the canonical basis
$$
\sum_{1\leq i_1,\ldots,i_L \leq d} g(i_1,\ldots,i_L) v_{i_1} \otimes \cdots \otimes v_{i_L}
$$
Assume that $g(i_1,\ldots,i_L)$ is always positive and define $G$ to the diagonal operator on $V^{\otimes L}$ defined by 
$$
G(v_{i_1} \otimes \cdots \otimes v_{i_L}) =  g(i_1,\ldots,i_L) v_{i_1} \otimes \cdots \otimes v_{i_L}.
$$
By the assumptions on $A$, the matrix $\mathcal{L} = G^{-1}A^{(L)}G$ is the generator of a continuous--time Markov chain on the state space $\{1,\ldots,d\}^L$.

Finally, if $A^{(L)}$ is self--adjoint on $V^{\otimes L}$ and $S$ is an operator that commutes with $A$, then $D=G^{-1}SG^{-1}$ is a self--duality function for the particle system generated by $\mathcal{L}$. 

This paper will consider the situation in which $\mathfrak{g} = \mathfrak{sp}_4$ or $\mathfrak{gl}_3$ and $V$ is the fundamental representation. The precise statements are as follows:

\begin{theorem}
There exists a central element $C\in \mathcal{U}_q(\mathfrak{gl}_3)$ and an operator $G$ on $V^{\otimes L}$ such that for 
$$
A^{(L)} := \sum_{i=1}^{L-1} 1^{\otimes i-1} \otimes \Delta(C) \otimes 1^{\otimes L-i-1},
$$
$G^{-1}A^{(L)}G$ is the generator of spin $1/2$ type $A_2$ ASEP on the lattice $\{1,\ldots,L\}$ with domain wall boundary conditions. 

The self--duality function $D(\cdot,\cdot)$ in Theorem \ref{SDF} is of the form $G^{-1}SG^{-1}$ for a symmetry $S$ of $A^{(L)}$.
\end{theorem}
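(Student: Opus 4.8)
The plan is to follow the recipe outlined in Section~\ref{Construction} with $\mathfrak{g}=\mathfrak{gl}_3$ and $V$ the three-dimensional fundamental representation, carrying out each of the four steps concretely. First I would take the central element $C\in\mathcal{U}_q(\mathfrak{gl}_3)$ constructed in Section~\ref{Central Element} (a Casimir-type element lying in $\mathcal{U}_q(\mathfrak{n}_-)\mathcal{U}_q(\mathfrak{h})\mathcal{U}_q(\mathfrak{n}_+)$) and compute $\Delta(C)$ acting on $V\otimes V$ in the weight basis $v_1,v_2,v_3$. One finds the scalar $a$ from $\Delta(C)(v_1\otimes v_1)=a\,v_1\otimes v_1$, sets $A=\Delta(C-a)$, and records its $9\times 9$ matrix; the claim is that after the shift the diagonal entries are $\le 0$ and the off-diagonal entries are $\ge 0$, so that $A$ is (up to the gauge $G$) a two-site Markov generator. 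Because $C$ is central, $A$ commutes with $\Delta$ of every element of $\mathcal{U}_q(\mathfrak{gl}_3)$, hence $A^{(L)}$ commutes with the $L$-fold coproduct; this is what will later supply symmetries.

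Next I would produce the ground-state vector $g\in V^{\otimes L}$ with $A^{(L)}g=0$ by applying the lowering/raising generators to the vector $v_3^{\otimes L}$ (the ``all type-2'' or vacuum configuration), extract its coordinates $g(i_1,\dots,i_L)$ in the canonical basis, check positivity, and let $G$ be the corresponding diagonal operator. Then $\mathcal{L}=G^{-1}A^{(L)}G$ is a genuine Markov generator on $\{0,1,2\}^L$ (relabeling the basis index $1,2,3$ of $V$ as the site states $1,2,0$, say), and the main computation is to verify that the resulting jump rates are exactly $L(1,0)=L(2,0)=L(1,2)=1$ and $R(1,0)=R(2,0)=R(1,2)=q^{-2}$ with all the blocked transitions having rate zero — i.e. that $\mathcal{L}$ is the generator of spin $1/2$ type $A_2$ ASEP with domain-wall boundary conditions. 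This is the content of the first assertion of the theorem and is established by reading off the off-diagonal entries of $G^{-1}AG$ on $V\otimes V$ and matching them against the jump-rate table in Section~\ref{Overview}.

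For the second assertion I would invoke the general principle from \cite{CGRS} recalled at the end of Section~\ref{Construction}: since $A^{(L)}$ is self-adjoint on $V^{\otimes L}$ with respect to a suitable inner product (which I would check from the explicit matrix of $A$ on $V\otimes V$), any operator $S$ commuting with $A^{(L)}$ yields a self-duality function $D=G^{-1}SG^{-1}$. The natural choice is $S=\Delta^{(L)}(q^{H})$ or a product of such group-like elements coming from the torus $\mathcal{U}_q(\mathfrak{h})$, together possibly with a product of powers of a single lowering generator $F_i$ raised to powers matching the particle numbers $r,r'$; these commute with $A^{(L)}$ precisely because $A$ is built from the central element $C$. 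The remaining task is then the bookkeeping step: compute $G^{-1}SG^{-1}$ explicitly on a basis vector $v_{i_1}\otimes\cdots\otimes v_{i_L}$ paired against the configuration $\xi$, and check that the resulting weights reproduce the formula
$$
D(\eta,\xi)=\prod_{s=1}^r 1_{\{\eta_{n_s}=1\}}q^{2\tilde N^R_{n_s}(\eta)+2n_s}\prod_{s'=1}^{r'}1_{\{\eta_{m_{s'}}\neq 0\}}q^{2N^R_{m_{s'}}(\eta)+2m_{s'}}
$$
in Theorem~\ref{SDF}. The exponents $2\tilde N^R_{n_s}$ and $2N^R_{m_{s'}}$ should emerge from the $q^H$-weights accumulated to the right of each occupied site, and the $2n_s$, $2m_{s'}$ from the diagonal part of $G^{-1}$ itself.

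I expect the main obstacle to be the explicit diagonalization/positivity bookkeeping rather than anything conceptual: one must verify that the shift by $a$ really does make $A$ a Markov generator (sign conditions on all nine entries), that the ground-state coordinates $g(i_1,\dots,i_L)$ are strictly positive and have a closed product form so that $G^{-1}(\,\cdot\,)G^{-1}$ can be computed in closed form, and that the chosen symmetry $S$ is exactly the one whose gauge transform matches the stated $D$ — in particular pinning down which group-like element and which power of which $F_i$ to use, and handling the domain-wall boundary terms at sites $1$ and $L$ which break the translation symmetry and are responsible for the $q^{2n_s}$, $q^{2m_{s'}}$ factors. These are all finite computations with the rank-two quantum group, carried out in detail in Section~\ref{A2}.
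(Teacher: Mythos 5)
Your overall strategy for the first assertion --- the CGRS recipe with the Gould--Zhang--Bracken central element \eqref{GZBC}, the constant shift so that $\Delta(C)$ kills $v_1\otimes v_1$, a positive ground state obtained by applying raising operators to the vacuum $v_3^{\otimes L}$, and the gauge conjugation $G^{-1}A^{(L)}G$ matched against the jump-rate table --- is exactly the paper's route, and your sign check on the $9\times 9$ matrix is indeed all that is needed there (Lemma \ref{A2Gen}; in the $A_2$ case no $\epsilon$-regularization is required, unlike $C_2$).

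The gap is in your choice of the symmetry $S$. You propose $S=\Delta^{(L)}(q^{H})$, i.e.\ group-like elements of the torus, ``together possibly with a product of powers of a single lowering generator $F_i$.'' A purely group-like $S$ is diagonal, so $G^{-1}SG^{-1}$ is diagonal and cannot equal the function of Theorem \ref{SDF}, which is supported on all pairs $(\eta,\xi)$ with $\eta$ having type-$1$ particles at the $n_s$ and particles of either type at the $m_{s'}$. A fixed power of a single $f_i$ (or $e_i$) only connects sectors with one prescribed particle count and uses only one of the two simple roots, whereas the duality function must register both particle types simultaneously. The symmetry the paper actually uses is the ordered product of $q$-exponentials of \emph{both} raising generators,
$$
S=\exp_{q^2}\bigl(\Delta^{(L)}e_2\bigr)\cdot\exp_{q^2}\bigl(\Delta^{(L)}e_1\bigr),
$$
which commutes with $A^{(L)}$ by Lemma \ref{commutes}, packages all powers of $e_1,e_2$ at once, and factorizes site by site via Proposition \ref{pseudofac}, giving the closed product form of Proposition \ref{ExplicitS}. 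Moreover the ground state and the symmetry are the same object: $G(\eta)=S(\eta,\Omega)$, so $g=S\Omega$ is automatically in the kernel of $A^{(L)}$ and automatically positive with a product-form coefficient; you treat $g$ and $S$ as independent constructions and would have to establish positivity and the closed form separately. Without the correct $S$, the bookkeeping step you describe --- matching $G^{-1}SG^{-1}$ against the exponents $2\tilde N^R_{n_s}+2n_s$ and $2N^R_{m_{s'}}+2m_{s'}$ --- cannot be carried out.
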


In the following theorem, the notation for $A^{(L)}$ and $\tilde{A}^{(L)}$ are the same.

\begin{theorem}\label{C2Construction}
There exists a central element $C\in \mathcal{U}_q(\mathfrak{sp}_4)$ and operators $G_{\epsilon}$ on $V^{\otimes L}$ such that the limit $\lim_{\epsilon\rightarrow 0} G_{\epsilon}^{-1}A^{(L)}G_{\epsilon}$ is the generator of spin $1/2$ type $C_2$ ASEP on the lattice $\{1,\ldots,L\}$ with domain wall boundary conditions.

The functions in Theorem \ref{C2Duality} are of the form $G^{-1}SG^{-1}$ for a symmetry $S$ of $A^{(L)}$.
\end{theorem}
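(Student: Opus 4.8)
The plan is to follow the general recipe reviewed in Section \ref{Construction}, specialized to $\mathfrak{g} = \mathfrak{sp}_4$ with $V$ the four-dimensional fundamental representation. The first step is to produce the central element: using the construction of Section \ref{Central Element}, I would exhibit an explicit $C \in \mathcal{U}_q(\mathfrak{n}_-)\mathcal{U}_q(\mathfrak{h})\mathcal{U}_q(\mathfrak{n}_+)$ and compute the scalar $a$ by which $\Delta(C)$ acts on $v_1 \otimes v_1$, then set $A = \Delta(C - a)$. The key computation here is to write out the $16 \times 16$ matrix of $A$ on $V \otimes V$ in the weight basis $\{v_i \otimes v_j\}$, check that the diagonal entries are non-positive and the off-diagonal entries non-negative, and read off the two-site jump rates. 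One must verify that these rates, after the gauge transformation by $G_\epsilon$ and the $\epsilon \to 0$ limit, match exactly the six rates defining spin $1/2$ type $C_2$ ASEP; in particular the constraint $(q^{-4}+q^6)a = q^2(q^2+q^{-2})^2$ on the rate parameter $a$ in the model should emerge as the condition that makes the off-diagonal block consistent, i.e.\ that the naive diagonal operator $G$ degenerates and must be replaced by the $\epsilon$-family $G_\epsilon$.

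The second step is to construct the ground state vector $g \in V^{\otimes L}$ with $A^{(L)} g = 0$. Following the recipe, I would obtain $g$ by applying elements of $\mathcal{U}_q(\mathfrak{n}_+)$ (the coproduct-extended raising operators, i.e.\ creation operators) to the lowest-weight vector $v_4^{\otimes L}$; centrality of $C - a$ and the fact that it kills $v_1 \otimes v_1$ (hence $v_1^{\otimes L}$, and by symmetry $v_4^{\otimes L}$ after adjusting the scalar) guarantee $A^{(L)} g = 0$. I would then extract the coefficients $g(i_1, \ldots, i_L)$ explicitly, observe they are positive (this is where the precise $q$-binomial/weight structure of $\mathcal{U}_q(\mathfrak{sp}_4)$ enters), and define $G$, or rather the family $G_\epsilon$ needed because the literal $g$ has some vanishing or blowing-up coordinates in the relevant limit. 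The conjugated operator $\lim_{\epsilon \to 0} G_\epsilon^{-1} A^{(L)} G_\epsilon$ is then by construction a Markov generator, and comparing its matrix entries with the rates fixed in step one identifies it as the type $C_2$ ASEP generator; the domain wall boundary conditions arise because only nearest-neighbor terms $1^{\otimes i-1} \otimes A \otimes 1^{\otimes L-i-1}$ appear, with no boundary generator.

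For the duality half of the statement, I would invoke the self-adjointness of $A^{(L)}$ on $V^{\otimes L}$ with respect to a suitable inner product (inherited from a Shapovalov-type form on the fundamental representation, so that $\Delta(C)$ is symmetric), and then exhibit an explicit symmetry $S$ commuting with $A$: the natural candidate is $S = \Delta^{(L)}(E_S)$ for an element $E_S$ of $\mathcal{U}_q(\mathfrak{sp}_4)$ that commutes with $C$ — for instance a suitable power series in the Cartan elements times lowering operators, or in the simplest case $S$ built from $K_i$'s and the $F_i$'s analogously to the $A_2$ case. Since $C$ is central, any $\Delta^{(L)}$-image of a $\mathcal{U}_q$-element commutes with $A^{(L)}$, so the commutation is automatic; the content is in choosing $S$ so that $G^{-1} S G^{-1}$, when its matrix entries are computed on basis vectors $v_{i_1} \otimes \cdots \otimes v_{i_L}$, reproduces the two formulas in Theorem \ref{C2Duality}. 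This reduces to a bookkeeping identity: expanding $\Delta^{(L)}(S)$ via the coproduct produces sums over subsets matching the $\tilde N^L, N^L, N^R$ counts in the stated $D(\eta, \xi)$, and the $G^{-1} \cdot G^{-1}$ conjugation supplies the remaining powers of $q$ from the $g$-coefficients.

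The main obstacle I anticipate is the $\epsilon \to 0$ limiting procedure: unlike the $A_2$ case where a single diagonal $G$ works, for $C_2$ the ground state coefficients degenerate (this is precisely why the theorem is phrased with $G_\epsilon$ and a limit), so one must set up a one-parameter deformation — most likely deforming the highest weight or a Cartan parameter away from the fundamental representation and taking a limit — in which every conjugated matrix entry has a finite, correctly-signed limit. Verifying that this limit exists term-by-term, that it commutes with taking the symmetry $S$ through the conjugation, and that the resulting generator has the exact rates with the stated constraint on $a$, is the delicate part; the representation-theoretic inputs (centrality, self-adjointness, existence of $g$) are comparatively routine once the central element $C$ from Section \ref{Central Element} is in hand.
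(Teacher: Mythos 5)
Your outline reproduces the general recipe of Section \ref{Construction}, but it misses the two points where the $C_2$ case genuinely departs from that recipe, and both are essential. The reason the theorem is phrased with a family $G_\epsilon$ and a limit is not that the ground-state coefficients ``vanish or blow up,'' and the fix is not a deformation of the highest weight or of a Cartan parameter. The obstruction is that the zero-weight block $A_{(0,0)}$ of $\Delta(C-a)$ has off-diagonal entries of both signs for every $q$ (its second row is $-q^2$ times its first), and conjugation by a positive diagonal matrix can never change the sign of an off-diagonal entry; so no single $G$ of the type described in Section \ref{Construction} can turn $A^{(L)}$ into a generator on all of $\{1,2,3,4\}^L$. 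The paper's resolution stays inside the fundamental representation and instead perturbs the ground state within $\ker A^{(L)}$: it takes $g_\epsilon = \bigl(\exp_{q^4}(\Delta^{(L)}e_2)\exp_{q^2}(\Delta^{(L)}e_1) + \epsilon\sum_{e\in\mathcal{E}_2}\Delta^{(L)}e\bigr)\Omega$, where the $\epsilon$-part is precisely the portion of the kernel supported on configurations with a doubly occupied site (the set $\mathcal{B}_2$). Then $g_\epsilon>0$ everywhere, $g_\epsilon\to 0$ on $\mathcal{B}_2$ and stays positive on $\mathcal{B}_1$, so in the limit the rates out of $\mathcal{B}_1$ into $\mathcal{B}_2$ vanish and the restriction of $\lim_{\epsilon\to0}G_\epsilon^{-1}A^{(L)}G_\epsilon$ to $\mathcal{B}_1\cong\{0,1,2\}^L$ is a bona fide generator; the offending negative entries only ever involve $\mathcal{B}_2$. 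Your proposed mechanism does not address this sign problem at all.

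For the duality half, commutation of $S$ with $A^{(L)}$ together with symmetry of $A^{(L)}$ gives the intertwining $\mathcal{L}_\epsilon D_\epsilon = D_\epsilon\mathcal{L}_\epsilon^*$ only on the full space $V^{\otimes L}$, whereas the limiting process lives on $\mathcal{B}_1$; the resulting identity $\sum_x \mathbb{P}_t(z\to x)D(x,y)=\sum_x\mathbb{P}_t(y\to x)D(z,x)$ still sums over the unphysical states $x\in\mathcal{B}_2$. One needs the additional condition $D(x,y)=D(z,x)=0$ for $x\notin\mathrm{span}(\mathcal{B}_1)$ (Lemma \ref{Proper}) before the algebraic relation becomes a stochastic duality. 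This is exactly why Theorem \ref{C2Duality}(2), which uses $S=\exp_{q^4}(\Delta^{(L)}e_2)\exp_{q^2}(\Delta^{(L)}e_1)$, is asserted only for dual configurations $\xi\in\{0,1\}^L$ (so that $S$ never creates a doubly occupied site), while the unrestricted self-duality (1) must use the smaller symmetry $S=\exp_{q^4}(\Delta^{(L)}e_2)$. Your proposal treats this step as automatic bookkeeping, but without it the argument does not produce a duality for the actual Markov process.
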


\section{Central Element}\label{Central Element}
The first step is to find a suitable central element in $\mathcal{U}_q(\mathfrak{g})$. This will be done with the quantum Harish--Chandra isomorphism. In principle, one could directly check that the resulting element is central using only the commutation relations, but the whole proof is presented here in order to make the construction less mysterious and more applicable for other Lie algebras.

Given a simple Lie algebra $\mathfrak{g}$ of rank $n$, the quantum group $\Uq$ is the Hopf algebra generated by $\{e_i,f_i,k_i\},1\leq i\leq n$ with relations
$$
[e_i,f_j] = \delta_{ij}\frac{k_i-k_i^{-1}}{q_i-q_i^{-1}} , \quad [k_i,k_j]=0
$$
$$
[e_i,e_j]=[f_i,f_j]=0,\ \  i\neq j+1,
$$
$$
k_{i}e_{j}= q^{(\alpha_i,\alpha_j)}e_{j}k_{i} \quad k_{i}f_{j}= q^{-(\alpha_i,\alpha_j)}f_{j}k_{i} 
$$
$$
\sum_{r=0}^{-a_{ij}} \binom{-a_{ij}}{r}_q e_i^{r}e_{j}e_i^{-a_{ij}-r}=0
$$
where
$$
q_i = q^{(\alpha_i,\alpha_i)/2}
$$
$$
\binom{n}{m}_q = \frac{(n)_q!}{(m)_q!(n-m)_q!}, \quad (n)_q! = \prod_{k=1}^n (k)_q, \quad (n)_q = \frac{q^n-q^{-n}}{q-q^{-1}}
$$
and
$$
a_{ij} = \frac{2(\alpha_i,\alpha_j)}{(\alpha_i,\alpha_i)}
$$
is the Cartan matrix. (Recall that $(\alpha_i,\alpha_i)=2$ for short roots and $4$ for long roots.) The co--product is
$$
\Delta(e_i) = e_i \otimes 1  + k_i \otimes e_i \quad \Delta(f_i) = 1\otimes f_i + f_i\otimes k_i^{-1}
$$
and the antipode is 
$$
S(e_i) = -k_i^{-1}e_i \quad S(f_i) = -f_i k_i, \quad S(k_i) = k_i^{-1},
$$
We will also use Greek letter subscripts $k_{\alpha}$ to denote the $k_i$, when it is notationally more convenient to do so, and $k_{\alpha+\beta}$ denotes $k_{\alpha}k_{\beta}$.

Letting $\mathfrak{b}_{\pm}$ denote the Borel subalgebras, there is a pairing (see Proposition 6.12 of \cite{J}) on $ \mathcal{U}_q(\mathfrak{b}_-) \times \mathcal{U}_q(\mathfrak{b}_+)$ defined by 
$$
\langle k_{\alpha}, k_{\beta}\rangle = q^{-(\alpha,\beta)_{\mathfrak{g}}}, \quad \langle f_i,e_j\rangle = \frac{-\delta_{ij}}{q_i-q_i^{-1}}, \quad \langle k_i,e_j\rangle = \langle f_i,k_j\rangle = \langle 1,e_i, \rangle=\langle f_j,1\rangle=0, \quad \langle 1, 1\rangle=1
$$
$$
\langle y, x\cdot x' \rangle = \langle \Delta(y), x'\otimes x\rangle, \quad \langle y \cdot y', x\rangle = \langle y \otimes y', \Delta(x)\rangle
$$
where $( \cdot,\cdot ){\mathfrak{g}}$ is an invariant, non--degenerate invariant symmetric bilinear form on $\mathfrak{h}^*$. Furthermore, according to Lemma 6.16 of \cite{J},
\begin{equation}\label{anti}
(\omega(x),\omega(y))=(y,x)=(\tau(y),\tau(x))
\end{equation}
where $\omega$ is the automorphism and $\tau$ is the antiautomorphism defined by
\begin{align*}
\omega(e_i)=f_i, \quad \omega(f_i)=e_i, \quad \omega(k_i)=k_i^{-1}\\
\tau(e_i)=e_i, \quad \tau(f_i)=f_i, \quad \tau(k_i)=k_i^{-1}
\end{align*}

Let $V$ be the fundamental representation of $\mathfrak{g}$ and let $\{v_{\mu}\}$ be a basis of $V$ such that $v_{\mu} \in V[\mu]$, the $\mu$--weight space of $V$.  For any $\mu \geq \lambda$ such that $V[\mu]$ and $V[\lambda]$ are nonzero, let $e_{\mu\lambda}$ and $f_{\lambda\mu}$ be elements of $U^+$ and $U^-$ respectively such that $e_{\mu\lambda}v_{\lambda}=v_{\mu}$ and $f_{\lambda\mu}v_{\mu}=v_{\lambda}$, Let $\rho$ be half the sum of the positive roots of $\mathfrak{g}$, and recall that $(2\rho,\alpha)=(\alpha,\alpha)$ for the simple roots $\alpha$. 

\begin{lemma}\label{CentralLemma}
If $q$ is not a root of unity and $2\mu$ is in the root lattice of $\mathfrak{g}$ for all weights $\mu$ of $V$, then the element
\begin{equation}\label{central}
\sum_{\mu \geq\lambda }  q^{(\mu-\lambda,\mu)}  q^{-(2\rho,\mu)} e_{\mu\lambda}^*  k_{-\lambda-\mu} f_{\lambda\mu}^*
\end{equation}
is central in $\Uq$, where the star $^*$ denotes the dual element under $\langle,\cdot,\cdot\rangle$. 
\end{lemma}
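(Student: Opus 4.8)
The plan is to recognize the element \eqref{central} as the image of a natural central element under the inverse of the quantum Harish--Chandra isomorphism, and to verify centrality directly by exhibiting it as a "quantum Casimir"-type construction coming from the pairing between the two Borel halves. First I would recall the general mechanism: if $\{x_\beta\}$ is a basis of $\mathcal{U}_q(\mathfrak{n}_+)$ homogeneous for the root-space grading and $\{x^\beta\}$ is the dual basis of $\mathcal{U}_q(\mathfrak{n}_-)$ under $\langle\cdot,\cdot\rangle$, then an element of the form $\sum_\beta x^\beta k_\gamma x_\beta$, for the appropriate grading-dependent group-like factor $k_\gamma$, commutes with all of $\mathcal{U}_q(\mathfrak{g})$. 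The factor $k_{-\lambda-\mu}$ together with the scalars $q^{(\mu-\lambda,\mu)}q^{-(2\rho,\mu)}$ is exactly what is needed to make the weight bookkeeping work out, using $(2\rho,\alpha)=(\alpha,\alpha)$ as noted just before the lemma statement.

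The key steps, in order, would be: (1) Fix the basis $\{e_{\mu\lambda}\}$ of the relevant graded pieces of $U^+$ indexed by pairs of weights $\mu\geq\lambda$ of $V$, and let $\{e_{\mu\lambda}^*\}\subset U^-$, $\{f_{\lambda\mu}^*\}\subset U^+$ be the dual families under $\langle\cdot,\cdot\rangle$; record how the pairing restricts to each graded component and check nondegeneracy there (this is where "$q$ not a root of unity" enters). (2) Verify commutation with each $k_i$: both $e_{\mu\lambda}^* f_{\lambda\mu}^*$ and $k_{-\lambda-\mu}$ have total weight zero when paired correctly, so the sum is a sum of weight-zero terms and hence commutes with the Cartan part — this is the easy check. (3) Verify commutation with each $e_i$ and each $f_i$. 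For this I would use the defining adjointness property of the pairing, $\langle y, xx'\rangle=\langle\Delta(y),x'\otimes x\rangle$ and $\langle yy',x\rangle=\langle y\otimes y',\Delta(x)\rangle$, together with the explicit coproducts $\Delta(e_i)=e_i\otimes1+k_i\otimes e_i$ and $\Delta(f_i)=1\otimes f_i+f_i\otimes k_i^{-1}$, to show that multiplying the central candidate by $e_i$ on the left and on the right produces the same expression after re-indexing the sum over weight pairs; the group-like factors $k_i$ appearing in the coproducts are absorbed by shifting $k_{-\lambda-\mu}$, and the discrepancy in the scalar prefactors is cancelled precisely by $q^{(\mu-\lambda,\mu)-(2\rho,\mu)}$ and the relation $(2\rho,\alpha)=(\alpha,\alpha)$. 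The hypothesis that $2\mu$ lies in the root lattice for all weights $\mu$ of $V$ guarantees that $k_{-\lambda-\mu}=k_{\lambda+\mu}^{-1}$ is a well-defined element of $\mathcal{U}_q(\mathfrak{h})$ (i.e. an honest product of the $k_i^{\pm1}$), so that the whole expression lives in $\mathcal{U}_q(\mathfrak{n}_-)\mathcal{U}_q(\mathfrak{h})\mathcal{U}_q(\mathfrak{n}_+)$ as claimed.

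The cleanest route for step (3) is probably not a brute-force commutator computation but rather to invoke the quantum Harish--Chandra isomorphism and the standard fact (see Jantzen, Chapter 6) that the center of $\Uq$ is spanned by elements obtained by pairing a finite-dimensional representation against itself; concretely, $\sum_{\mu\geq\lambda} (\text{coeff}) \, e_{\mu\lambda}^* k_{-\lambda-\mu} f_{\lambda\mu}^*$ is the "trace of the $R$-matrix twisted by $k_{2\rho}$" evaluated in the fundamental representation $V$, which is manifestly central. I would then only need to check that the coefficients $q^{(\mu-\lambda,\mu)}q^{-(2\rho,\mu)}$ match those coming out of the universal $R$-matrix expansion on $V\otimes V$ restricted to the weight spaces, which is a finite, representation-specific verification. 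The main obstacle I anticipate is bookkeeping: correctly normalizing the dual bases $e_{\mu\lambda}^*$ and $f_{\lambda\mu}^*$ (there is a genuine asymmetry between using $e_{\mu\lambda}^*\in U^-$ versus $f_{\lambda\mu}^*\in U^+$, and one must be careful that the pairing is computed in the right order, $\langle y,x\rangle$ with $y\in\mathcal{U}_q(\mathfrak{b}_-)$, $x\in\mathcal{U}_q(\mathfrak{b}_+)$) and tracking the $q$-power prefactors through the coproduct manipulations so that everything cancels. Once the normalization conventions are pinned down, centrality follows formally from the adjointness of the pairing.
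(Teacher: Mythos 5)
Your proposal is correct and follows essentially the same route as the paper: the paper's proof simply cites the construction of central elements via the quantum Harish--Chandra isomorphism in Jantzen (Chapter 6) and observes that the resulting expression $\sum_{\mu\geq\lambda} q^{(\nu,\mu)}q^{-(2\rho,\lambda)}q^{-(2\rho,\nu)}e_{\mu\lambda}^* k_{\nu}k_{-2\lambda-2\nu}f_{\lambda\mu}^*$ simplifies to \eqref{central}, which is precisely the ``pair $V$ against itself under the Borel pairing'' mechanism you identify as the cleanest route. Your remarks on the roles of the two hypotheses (nondegeneracy of the pairing, and well-definedness of $k_{-\lambda-\mu}$) are also consistent with the paper.
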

\begin{proof}
By following the construction of the Harish--Chandra isomorphism in \cite{J}, one sees that
$$
\sum_{\mu \geq\lambda }  q^{(\nu,\mu)} q^{-(2\rho,\lambda)} q^{-(2\rho,\nu)} e_{\mu\lambda}^* k_{\nu}k_{-2\lambda-2\nu} f_{\lambda\mu}^*, \quad \nu:=\mu-\lambda
$$
is central, which simplifies to \eqref{central}.
\end{proof}

\subsection{$\mathfrak{sp}_4$} 
Recall that $\mathfrak{sp}_{2n}$ is the rank $n$  Lie algebra consisting of $2n\times 2n$ matrix of the form
$$
\left\{
\left(
\begin{tabular}{cc}
$A $& $B $\\
$C$ & $D$
\end{tabular}
\right)
: A=-D^T, B=B^T, C=C^T
\right\}
$$
Letting $E_{ij}$ denote the matrix with a $1$ at the $(i,j)$-entry and zeroes elsewhere, define 
\begin{align*}
e_i &= E_{i,i+1} - E_{n+i+1,n+i}, \quad  f_i = E_{i+1,i} - E_{n+i,n+i+1} \\
h_i &= E_{ii} - E_{i+1,i+1} -  E_{n+i,n+i} + E_{n+i+1,n+i+1}  \\
e_n &= E_{n,2n}, \quad  f_n = E_{2n,n} \quad  h_n = E_{n,n} - E_{2n,2n} 
\end{align*}
The simple roots and fundamental weights are
\begin{align*}
\alpha_i &= \epsilon_i - \epsilon_{i+1} , 1\leq i\leq n-1\\
\alpha_n &= 2\epsilon_n, \\
\omega_i &= \epsilon_1 + \epsilon_2 + \cdots + \epsilon_n, 1\leq i\leq n
\end{align*}
where $\epsilon_i(M)=M_{ii}$.  We have that $(\alpha_i, \alpha_i) =2$ for $1\leq i\leq n-1$ and $(\alpha_n,\alpha_n)=4$. We have that $(\alpha_{n-1},\alpha_n) = -2$. When $n=2$, the Cartan matrix of $\mathfrak{sp}_4$ is simply
$$
\left(
\begin{tabular}{cc}
2 & $-2$ \\
$-1$ & 2 
\end{tabular}
\right)
$$
The Dynkin diagram of $\mathfrak{sp}_4$ is of type $C_2$, hence the notation. To make notation clearer, $k_{(1,-1)}$ denotes $k_1$ and $k_{(0,2)}$ denotes $k_2$.

Let $V$ be the fundamental representation of $\mathfrak{sp}_4$. It has a basis $v_1,v_2,v_4,v_3$ which are in the weight spaces $\epsilon_1,\epsilon_2, -\epsilon_2, -\epsilon_1$. It is immediate that the condition of lemma \ref{CentralLemma} holds. Index these and order them by $\mathbf{1} \geq \mathbf{2} \geq \mathbf{\bar{2}} \geq \mathbf{\bar{1}} $. We have that
$$
\mathbf{1} \stackrel{f_1}{\longrightarrow} \mathbf{2} \stackrel{f_2}{\longrightarrow} \mathbf{\bar{2}} \stackrel{f_1}{\longrightarrow} \mathbf{\bar{1}} .
$$ 
Here, the sum of the positive roots is
$$
2\rho = 4\epsilon_1 + 2\epsilon_2 
$$
So that
\begin{equation}\label{rho}
(-2\rho,\mu)
= 
\begin{cases}
-4, &\mu = \mathbf{1} \\
-2, &\mu = \mathbf{2} \\
2, &\mu = \mathbf{\bar{2}} \\
4, &\mu = \mathbf{\bar{1}} \\
\end{cases}
\end{equation}
In order to write the central element, the dual elements need to be calculated:

\begin{lemma} The dual elements are:
\begin{align*}
(e_1)^* &= -(q-q^{-1})f_1,  \quad  &(f_1)^* = -(q-q^{-1})e_1 \\
(e_2)^* &= -(q^2-q^{-2})f_2,  \quad & (f_2)^* = -(q^2-q^{-2})e_2 \\
(e_1e_2)^* &= (q-q^{-1})  (q^{2}f_1f_2 - f_2f_1), \quad & (f_1f_2)^* = (q-q^{-1})  (q^{2}e_1e_2 - e_2e_1) \\
(e_2e_1)^* &= (q-q^{-1})  (q^{2}f_2f_1 - f_1f_2),  \quad & (f_2f_1)^* = (q-q^{-1})  (q^{2}e_2e_1 - e_1e_2) \\
(e_1e_2e_1)^* &= (q-q^{-1})(q f_1f_1f_2 - (q^{-1} + q^3)f_1f_2f_1 + qf_2f_1f_1) \\
(e_2e_1e_1)^* &= \frac{q-q^{-1}}{q+q^{-1}} \left( f_1(q^2 f_2f_1 - f_1f_2) - (q^2 f_2f_1 - f_1f_2)f_1\right)\\
(e_1e_1e_2)^* &= \frac{q-q^{-1}}{q+q^{-1}}  (q^2 f_1f_2 - f_2f_1)f_1  - f_1(q^2 f_1f_2 - f_2f_1) \\
(f_1f_2f_1)^* &= (q-q^{-1})(q e_1e_1e_2 - (q^{-1} + q^3)e_1e_2e_1 + qe_2e_1e_1) \\
(f_2f_1f_1)^* &= \frac{q-q^{-1}}{q+q^{-1}} \left( e_1(q^2 e_2e_1 - e_1e_2) - (q^2 e_2e_1 - e_1e_2)e_1\right)\\
(f_1f_1f_2)^* &= \frac{q-q^{-1}}{q+q^{-1}} \left( q^2 e_1e_2 - e_2e_1)e_1  - e_1(q^2 e_1e_2 - e_2e_1 \right)
\end{align*}
\end{lemma}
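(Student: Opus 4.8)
The plan is to exploit that the bilinear form $\langle\cdot,\cdot\rangle$ is uniquely determined by its values on the generators together with the two compatibility rules $\langle y,xx'\rangle=\langle\Delta(y),x'\otimes x\rangle$ and $\langle yy',x\rangle=\langle y\otimes y',\Delta(x)\rangle$, and that it respects the root-lattice grading, so that $\langle\mathcal{U}_q(\mathfrak{n}_-)_{-\mu},\mathcal{U}_q(\mathfrak{n}_+)_\nu\rangle=0$ unless $\mu=\nu$ (Proposition~6.12 of \cite{J}). Hence the dual of an $e$-monomial of weight $\nu$ is a linear combination of the $f$-monomials spanning $\mathcal{U}_q(\mathfrak{n}_-)_{-\nu}$, and computing it reduces to writing down a finite Gram matrix and inverting it. Moreover, by \eqref{anti} the automorphism $\omega$ interchanging $e_i\leftrightarrow f_i$ preserves the form, so the right-hand column of the table will follow from the left-hand column by applying $\omega$; it is therefore enough to compute $(e_1)^*,(e_2)^*,(e_1e_2)^*,(e_2e_1)^*,(e_1e_2e_1)^*,(e_2e_1e_1)^*,(e_1e_1e_2)^*$.

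First I would record the ingredients: $q_1=q$ and $q_2=q^2$, so the nonzero base pairings among simple generators are $\langle f_1,e_1\rangle=-1/(q-q^{-1})$ and $\langle f_2,e_2\rangle=-1/(q^2-q^{-2})$, while $\langle f_i,k_j\rangle=\langle k_i,e_j\rangle=\langle 1,e_j\rangle=\langle f_i,1\rangle=0$ and $\langle k_\alpha,k_\beta\rangle=q^{-(\alpha,\beta)}$. Iterating $\Delta(e_i)=e_i\otimes1+k_i\otimes e_i$ gives the coproducts of the words $e_1e_2,e_2e_1$ (four terms each) and $e_1e_1e_2,e_1e_2e_1,e_2e_1e_1$ (eight terms each). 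For $\langle f_{a_1}\cdots f_{a_k},e_{b_1}\cdots e_{b_l}\rangle=\langle f_{a_1}\otimes\cdots\otimes f_{a_k},\Delta^{(k-1)}(e_{b_1}\cdots e_{b_l})\rangle$ only the summands whose $j$-th leg is a Cartan monomial times $e_{a_j}$ survive, and each leg is evaluated by the small identities $\langle f_a,e_bk_\gamma\rangle=\langle f_a,e_b\rangle$ and $\langle f_a,k_\gamma e_b\rangle=q^{(\alpha_a,\gamma)}\langle f_a,e_b\rangle$ that come from $\Delta(f_a)$ and the vanishing pairings above; this collapses every matrix entry to $-1/(q_i-q_i^{-1})$ times a monomial in the $q^{(\alpha_i,\alpha_j)}$.

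With this in place the weight-$\alpha_1$ and weight-$\alpha_2$ spaces are one-dimensional and $\langle\lambda f_1,e_1\rangle=1$, $\langle\lambda f_2,e_2\rangle=1$ immediately force $\lambda=-(q-q^{-1})$ and $\lambda=-(q^2-q^{-2})$. For weight $\alpha_1+\alpha_2$ I would assemble the $2\times2$ matrix $\langle f_af_b,e_ce_d\rangle$ over $(ab),(cd)\in\{(1,2),(2,1)\}$, observe it is symmetric, and invert it; its rows give $(e_1e_2)^*$ and $(e_2e_1)^*$ as the asserted combinations of $f_1f_2,f_2f_1$. For weight $2\alpha_1+\alpha_2$ the three words $e_1e_1e_2,e_1e_2e_1,e_2e_1e_1$ are linearly independent in $\mathcal{U}_q(\mathfrak{sp}_4)$ — the only Serre relation at the pair $(1,2)$ involves three factors of $e_1$, so it lives in weight $3\alpha_1+\alpha_2$ and imposes nothing here — so the $3\times3$ Gram matrix is invertible, and inverting it produces $(e_1e_2e_1)^*,(e_2e_1e_1)^*,(e_1e_1e_2)^*$. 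Applying $\omega$ to all seven formulas yields the remaining entries of the lemma.

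The main labor is the weight-$2\alpha_1+\alpha_2$ step: the degree-three coproducts have eight terms apiece, and because the Cartan data is not symmetric ($(\alpha_1,\alpha_1)=2$, $(\alpha_2,\alpha_2)=4$, $(\alpha_1,\alpha_2)=-2$) the powers of $q$ created by commuting Cartan elements past the $f$'s do not telescope as cleanly as in the simply-laced case, so the $3\times3$ matrix and its inverse require careful bookkeeping. The symmetry of each Gram matrix — a consequence of \eqref{anti} applied to the order-reversing antiautomorphism $\tau$ — serves as a running consistency check and roughly halves the work.
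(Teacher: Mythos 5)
Your proposal is correct: the form respects the root--lattice grading, so each dual lives in the corresponding weight space of $\mathcal{U}_q(\mathfrak{n}_-)$, the degree--one and degree--two cases reduce to inverting $1\times 1$ and $2\times 2$ Gram matrices exactly as in the paper, and the reduction of the right--hand column to the left via $\omega$ and \eqref{anti} is also how the paper concludes. Your observation that the quantum Serre relations sit in weights $3\alpha_1+\alpha_2$ and $\alpha_1+2\alpha_2$, so that $e_1e_1e_2$, $e_1e_2e_1$, $e_2e_1e_1$ really are a basis of the three--dimensional weight space $2\alpha_1+\alpha_2$, is the right justification for invertibility there. The one place you genuinely diverge from the paper is this degree--three step: you propose to compute the full $3\times 3$ Gram matrix of pairings $\langle f_af_bf_c,\,e_de_ee_f\rangle$ (eight--term coproducts on each side) and invert it, whereas the paper instead forms the candidate elements $(e_1e_2)^*f_1$, $f_1(e_1e_2)^*$, $(e_2e_1)^*f_1$, $f_1(e_2e_1)^*$ from the already--known degree--two duals and pairs these against the three monomials; most of those pairings vanish or collapse immediately (e.g.\ $\langle (e_1e_2)^*f_1, e_2e_1e_1\rangle=0$ because no leg of the coproduct puts an $e_1e_2$ factor in the correct tensor slot), so suitable $q$--commutators $(e_1e_2)^*f_1-q^{-2}f_1(e_1e_2)^*$ etc.\ are read off as duals with far less bookkeeping. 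Both routes are valid linear algebra in the same weight space; yours is the more systematic brute--force version, the paper's recursion trades the $3\times 3$ inversion for a handful of sparse pairings and scales better to longer words (as in the $\mathfrak{gl}_n$ computation later in the paper).
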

\begin{proof}
The first two lines follow immediately from the definition of the pairing.
For the next two lines, we have that
\begin{align*}
\langle f_1f_2, e_1e_2\rangle &= \langle f_2 \otimes f_1k_2^{-1}, e_2\otimes e_1 \rangle\\
&=\langle f_2,e_2\rangle \langle f_1 \otimes k_2^{-1}, e_1 \otimes 1\rangle \\
&= (q^2-q^{-2})^{-1}(q - q^{-1})^{-1}
\end{align*}
By \eqref{anti}, 
$$
\langle f_2f_1,e_2e_1\rangle = (q^2-q^{-2})^{-1}(q - q^{-1})^{-1}.
$$
Furthermore, 
\begin{align*}
\langle f_1f_2,e_2e_1 \rangle &= \langle f_1\otimes f_2, k_2 e_1\otimes e_2 \rangle \\
&=\langle f_1 \otimes k_1^{-1}, e_1\otimes k_2 \rangle \langle f_2,e_2 \rangle \\
&=q^{-2} (q^2-q^{-2})^{-1}(q - q^{-1})^{-1}
\end{align*}
and 
\begin{align*}
\langle f_2f_1,e_1e_2 \rangle &= \langle f_2\otimes f_1, k_1 e_2 \otimes e_1 \rangle \\
&=\langle f_2 \otimes k_2^{-1}, e_2\otimes k_1 \rangle \langle f_1,e_1 \rangle \\
&=q^{-2} (q^2-q^{-2})^{-1}(q - q^{-1})^{-1}
\end{align*}
This proves  lines three and four.

Now for the remainder of the lemma. We have that 


\begin{align*}
\langle (e_1e_2)^*f_1 , e_1e_2e_1 \rangle &= \langle  (e_1e_2)^*  \otimes f_1, e_1e_2 k_1 \otimes e_1\rangle\\
&= \langle \Delta((e_1e_2)^*)  , k_1 \otimes  e_1e_2 \rangle  \langle e_1, f_1\rangle \\
&= \langle k_1k_2 \otimes (e_1e_2)^*  , k_1 \otimes  e_1e_2 \rangle  \langle e_1, f_1\rangle \\
&= - (q-q^{-1})^{-1}
\end{align*}
and also 
\begin{align*}
\langle f_1 (e_1e_2)^* , e_1e_2e_1 \rangle &= \langle f_1 \otimes (e_1e_2)^*,  k_1k_2 e_1 \otimes e_1e_2\rangle\\
&= \langle f_1,k_1k_2e_1\rangle  \\
&=  \langle f_1,e_1\rangle \langle  k_1^{-1}, k_1k_2 \rangle\\
&= - (q-q^{-1})^{-1}
\end{align*}
and additionally 
\begin{align*}
\langle  (e_1e_2)^*f_1 ,  e_2 e_1 e_1\rangle &= 0
\end{align*}
because $e_1e_2$ is not possible as a left tensor factor. Continuing, 
\begin{align*}
\langle (e_1e_2)^*f_1,  e_1e_1e_2 \rangle &= \langle (e_1e_2)^* \otimes f_1 , e_1 k_1 e_2 \otimes  e_1   +  k_1 e_1 e_2 \otimes  e_1 \rangle \\
&= (1+q^{-2}) \langle (e_1e_2)^* \otimes f_1 ,  k_1 e_1 e_2 \otimes  e_1  \rangle \\
&= - (1+q^{-2})(q-q^{-1})^{-1} 
\end{align*}
and 
\begin{align*}
\langle f_{1}(e_1e_2)^* , e_1e_1e_2 \rangle &= \langle  f_1 \otimes (e_1e_2)^*,  e_1 k_1 k_2 \otimes e_1e_2   +   k_1 e_1 k_2 \otimes e_1e_2  \rangle\\
&= (1+q^{2})\langle  f_1 \otimes (e_1e_2)^*,  e_1 k_1 k_2 \otimes e_1e_2\rangle\\
&= - (1+q^{2})(q-q^{-1})^{-1}
\end{align*}
So we see that
\begin{align*}
\langle  (e_1e_2)^*f_1 - f_1 (e_1e_2)^*, e_1e_2e_1 \rangle &= 0 \\
\langle  (e_1e_2)^*f_1 -  f_1 (e_1e_2)^* , e_1e_1e_2 \rangle &= (q^2-q^{-2})(q-q^{-1})^{-1} = q+q^{-1}\\
\langle (e_1e_2)^*f_1 -  f_1 (e_1e_2)^* , e_2e_1e_1 \rangle &= 0
\end{align*}
and that
\begin{align*}
\langle  (e_1e_2)^*f_1 - q^{-2} f_1 (e_1e_2)^* , e_1e_2e_1 \rangle &= (-1+q^{-2})/(q-q^{-1})=-q^{-1} \\
\langle  (e_1e_2)^*f_1 -  q^{-2} f_1 (e_1e_2)^* , e_1e_1e_2\rangle &= 0 \\
\langle  (e_1e_2)^*f_1 -  q^{-2}f_1 (e_1e_2)^* , e_2e_1e_1 \rangle &= 0
\end{align*}
Furthermore, 
\begin{align*}
\langle (e_2e_1)^*f_1, e_1e_2e_1 \rangle &= \langle (e_2e_1)^* \otimes f_1, k_1e_2e_1\otimes e_1\rangle \\
&= -(q-q^{-1})^{-1} 
\end{align*}
and that
\begin{align*}
\langle f_{1}(e_2e_1)^* , e_2e_1e_1 \rangle &= \langle  f_1 \otimes (e_2e_1)^*,  k_2k_1e_1\otimes e_2e_1 + k_2e_1k_1 \otimes e_2  e_1 \rangle\\
&= (1+q^{-2})\langle    f_1 \otimes (e_2e_1)^* , k_2k_1e_1\otimes e_2e_1 \rangle\\
&= - (1+q^{-2})(q-q^{-1})^{-1}
\end{align*}
and
\begin{align*}
\langle (e_2e_1)^* f_1 , e_2e_1e_1 \rangle &= \langle  (e_2e_1)^* \otimes f_1,   e_2 e_1 \otimes k_2k_1 e_1  + e_2 e_1\otimes k_2 e_1 k_1 \rangle\\
&= (1+q^{2})\langle  (e_2e_1)^* \otimes f_1 , e_2 e_1 \otimes k_2 k_1e_1  \rangle\\
&= -(1+q^{2})(q-q^{-1})^{-1}
\end{align*}
So
so that
\begin{align*}
\langle  (e_2e_1)^*f_1 -   f_1 (e_2e_1)^* , e_1e_2e_1 \rangle &= 0 \\
\langle (e_2e_1)^*f_1 -   f_1 (e_2e_1)^* , e_1e_1e_2 \rangle &= 0 \\
\langle  (e_2e_1)^*f_1 -   f_1 (e_2e_1)^* , e_2e_1e_1 \rangle &= (q^{-2} - q^{2})(q-q^{-1})^{-1} = - (q + q^{-1})
\end{align*}
So that
\begin{align*}
(e_1e_2e_1)^* &= (q-q^{-1})(q^{-1}f_1  (e_2e_1)^* - q (e_2e_1)^*f_1) \\
&= (q-q^{-1}) \left(q^{-1} f_1(q^2f_1f_2 - f_2f_1)-q^{}(q^2f_1f_2 - f_2f_1)f_1\right)\\
&= (q-q^{-1})(q f_1f_1f_2 - (q^{-1} + q^3)f_1f_2f_1 + qf_2f_1f_1) \\
(e_2e_1e_1)^* &= \frac{q-q^{-1}}{q+q^{-1}} \left( f_1(q^2 f_1f_2 - f_2f_1) - (q^2 f_1f_2 - f_2f_1)f_1\right)\\
(e_1e_1e_2)^* &= \frac{q-q^{-1}}{q+q^{-1}} \left( (q^2 f_1f_2 - f_2f_1)f_1  - f_1(q^2 f_1f_2 - f_2f_1) \right)\\
\end{align*}
which is lines five through seven. Lines eight through ten follow from \eqref{anti}.
\end{proof}

\begin{proposition}\label{sp4central}  If $q$ is not a root of unity, the element 
\begin{multline*}
q^{-4} k_{(-2,0)} + q^{-2} k_{(0,-2)} + q^2 k_{(0,2)} + q^4 k_{(2,0)} \\
+ q^{-3} (q-q^{-1})^2 f_1 k_{(-1,-1)}e_1 + q^{-3} (e_1e_2)^* k_{(-1,1)} (f_2f_1)^* + (q^2+q^{-2})f_2e_2 \\
+ q^{-2} (e_1e_2e_1)^* (f_1f_2f_1)^* + q^{-1} (e_2e_1)^* k_{(1,-1)} (f_1f_2)^* + q^3 (q-q^{-1})^2  f_1 k_{(1,1)} e_1 
\end{multline*}
\begin{align*}
&=q^{-4} k_{(-2,0)} + q^{-2} k_{(0,-2)} + q^4 k_{(2,0)} + q^2 k_{(0,2)} \\
&+ (q-q^{-1})^2 q^{-3} f_1 k_{(-1,-1)}e_1 + (q-q^{-1})^2 q^3 f_1 k_{(1,1)}e_1 + (q^2-q^{-2})^2 f_2e_2\\
& + (q-q^{-1})^2( q^{-1} (qf_{1}f_{2}-q^{-1}f_2f_1) k_{(-1,1)}( q e_2e_1  - q^{-1} e_1e_2) + q (qf_2f_1-q^{-1}f_1f_2) k_{(1,-1)} (q e_1e_2 -  q^{-1}e_2e_1))\\
&+ (q-q^{-1})^2  \left( f_1f_1f_2 - (q^{-2} + q^2)f_1f_2f_1 +  f_2f_1f_1\right) \left( e_1e_1e_2 - (q^{-2} + q^2)e_1e_2e_1 +  e_2e_1e_1\right) 
\end{align*}
is central in $\mathcal{U}_q(\mathfrak{sp}_4)$.
\end{proposition}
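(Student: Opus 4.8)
The plan is to apply Lemma~\ref{CentralLemma} directly to $\mathfrak{g} = \mathfrak{sp}_4$ with $V$ its fundamental representation, and then simplify the resulting formula into the displayed expression. By Lemma~\ref{CentralLemma}, since $q$ is not a root of unity and $2\mu$ lies in the root lattice for every weight $\mu$ of $V$ (checked earlier in the text), the element
$$
\sum_{\mu \geq \lambda} q^{(\mu-\lambda,\mu)} q^{-(2\rho,\mu)} e_{\mu\lambda}^* \, k_{-\lambda-\mu} \, f_{\lambda\mu}^*
$$
is central. So the first step is simply to enumerate the pairs $\mu \geq \lambda$ among the four weights $\mathbf{1},\mathbf{2},\mathbf{\bar 2},\mathbf{\bar 1}$. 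The diagonal pairs $\mu = \lambda$ contribute the four Cartan terms $q^{-(2\rho,\mu)} k_{-2\mu}$, which by \eqref{rho} and the identifications $k_{(1,-1)}=k_1$, $k_{(0,2)}=k_2$ give $q^{-4}k_{(-2,0)} + q^{-2}k_{(0,-2)} + q^2 k_{(0,2)} + q^4 k_{(2,0)}$. The off--diagonal pairs are $(\mathbf{1},\mathbf{2})$, $(\mathbf{2},\mathbf{\bar 2})$, $(\mathbf{\bar 2},\mathbf{\bar 1})$, $(\mathbf{1},\mathbf{\bar 2})$, $(\mathbf{2},\mathbf{\bar 1})$, and $(\mathbf{1},\mathbf{\bar 1})$, corresponding respectively to $f$--words $f_1$, $f_2$, $f_1$, $f_2 f_1$, $f_1 f_2$, and $f_1 f_2 f_1$ (reading off the chain $\mathbf 1 \xrightarrow{f_1}\mathbf 2\xrightarrow{f_2}\mathbf{\bar 2}\xrightarrow{f_1}\mathbf{\bar 1}$).

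The second step is bookkeeping: for each pair compute the weight $\nu = \mu - \lambda$, the scalar $q^{(\nu,\mu)} q^{-(2\rho,\mu)}$, the group--like factor $k_{-\lambda-\mu}$, and substitute the dual elements $e_{\mu\lambda}^*$ and $f_{\lambda\mu}^*$ from the preceding Lemma. For instance the pair $(\mathbf 1,\mathbf 2)$ has $\nu = \alpha_1$, $k_{-\lambda-\mu} = k_{(-1,-1)}$, and $e_{\mu\lambda} = f_1$, $f_{\lambda\mu} = e_1$ (note the roles: $e_{\mu\lambda}$ raises $\lambda$ to $\mu$, but in $\mathfrak{sp}_4$ we go \emph{down} via $f_i$, so one must be careful about which generator raises which weight — here I would double--check the convention that $e_{\mu\lambda}\in U^+$ with $e_{\mu\lambda}v_\lambda = v_\mu$, meaning $\mathbf 2 = f_1 \mathbf 1$ forces $e_{\mathbf 1 \mathbf 2} $ to be built from $f$'s only if the paper's weight ordering has $\mathbf 2$ above — more likely the ordering $\mathbf 1 \geq \mathbf 2$ with $e$'s lowering indices). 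After plugging in $(e_1)^* = -(q-q^{-1})f_1$, $(f_1)^* = -(q-q^{-1})e_1$, etc., the product $e_{\mu\lambda}^* k f_{\lambda\mu}^*$ becomes an explicit element of $U^- U^0 U^+$, and one collects terms. The $(\mathbf 1,\mathbf{\bar 1})$ term uses $(e_1e_2e_1)^*$ and $(f_1f_2f_1)^*$ from lines five and eight of the Lemma, producing the bottom line with the combination $f_1f_1f_2 - (q^{-2}+q^2)f_1f_2f_1 + f_2f_1f_1$; here I would verify that the three rank--three dual elements combine with their prefactors into exactly that symmetric form (the $(q-q^{-1})/(q+q^{-1})$ denominators must cancel against a compensating $(q^2-q^{-2})$-type factor coming from $q^{(\nu,\mu)}$ with $(\nu,\mu)$ a long-root inner product).

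The third step is to reconcile the two displayed forms of the answer in the Proposition: the first uses the starred notation verbatim, the second expands everything. Checking that the middle lines agree amounts to the identities $(e_1e_2)^* = (q-q^{-1})(q^2 f_1f_2 - f_2f_1) = q(q-q^{-1})(qf_1f_2 - q^{-1}f_2f_1)$ and similarly for the other starred rank--two elements, plus tracking the scalar prefactors $q^{-3}, q^{-1}, q, q^3$ and the mixed group-likes $k_{(-1,1)}, k_{(1,-1)}$; this is pure algebra once the dictionary from the Lemma is in hand. \textbf{The main obstacle} I anticipate is not any single hard idea but the sheer care needed in the sign and power-of-$q$ bookkeeping — in particular getting the inner products $(\mu-\lambda,\mu)$ right for the long root $\alpha_2 = 2\epsilon_2$ (where $(\alpha_2,\alpha_2)=4$ rather than $2$), making sure the $q_i = q^{(\alpha_i,\alpha_i)/2}$ distinction is respected throughout, and confirming that the rank-three dual elements from the Lemma assemble into the advertised symmetric combination. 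As a sanity check I would verify centrality independently on the highest weight vector $v_1 \otimes \cdots$, or at least check that $\Delta(C)$ acting on $v_1^{\otimes 2}$ and on $v_4^{\otimes 2}$ gives the scalars $q^{\pm 4} + q^{\pm 4} + \cdots$ predicted by the Cartan part, which pins down the normalization; alternatively one can cite that the Harish--Chandra construction in \cite{J} guarantees centrality and treat this Proposition purely as the evaluation of that general formula in the case at hand.
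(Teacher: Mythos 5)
Your proposal follows essentially the same route as the paper: the proof there is exactly the evaluation of the general formula \eqref{central} from Lemma \ref{CentralLemma} using the weight data \eqref{rho}, enumerating the diagonal pairs and the six off--diagonal pairs $\mu>\lambda$ and substituting the dual elements computed in the preceding lemma. One small point to fix in your bookkeeping: by the paper's convention $e_{\mu\lambda}\in U^+$ with $e_{\mu\lambda}v_\lambda=v_\mu$, so for the pair $(\mathbf{1},\mathbf{2})$ one has $e_{\mathbf{1}\mathbf{2}}=e_1$ and $f_{\mathbf{2}\mathbf{1}}=f_1$ (not the reverse); the $f_1$ on the left of $k_{(-1,-1)}$ in the final formula arises only after dualizing, via $(e_1)^*=-(q-q^{-1})f_1$.
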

\begin{proof}
Use \eqref{central} and \eqref{rho}. The terms with $\mu=\lambda$ yield
$$
q^{-4} k_{(-2,0)} + q^{-2} k_{(0,-2)} + q^2 k_{(0,2)} + q^4 k_{(2,0)}
$$
Furthermore, $\mu=\mathbf{1} > \mathbf{2} = \lambda$ yields
$$
q^{-3} (q-q^{-1})^2 f_1 k_{(-1,-1)}e_1
$$
and $\mu=\mathbf{1},\mathbf{2} > \mathbf{\bar{2}}=\lambda$ yields
$$
q^{-3} (e_1e_2)^* k_{(-1,1)} (f_2f_1)^* + (q^2+q^{-2})f_2e_2
$$
and $\mu=\mathbf{1},\mathbf{2}, \mathbf{\bar{2}} > \mathbf{\bar{1}}=\lambda$ yields
$$
q^{-2} (e_1e_2e_1)^* (f_1f_2f_1)^* + q^{-1} (e_2e_1)^* k_{(1,-1)} (f_1f_2)^* + q^3 (q-q^{-1})^2  f_1 k_{(1,1)} e_1 
$$
\end{proof}

One can check (with some calculation) that this element acts as $q^{-6} + q^{-2} + q^2 + q^6$ times the identity on $V$, which is consistent with the Harish--Chandra isomorphism.

\subsection{$\mathfrak{gl}_3$}
Recall that $\mathfrak{sl}_n$ is the rank $n-1$ Lie algebra consisting of all traceless $n\times n$ matrices. Set
$$
e_i= E_{i,i+1}, \quad f_i =E_{i+1,i}, \quad h_i= E_{ii}-E_{i+1,i+1}
$$
The simple roots and fundamental weights are
\begin{align*}
\alpha_i &= \epsilon_i - \epsilon_{i+1}, \quad  1\leq i \leq n-1, \\
 \omega_i &= \epsilon_1 + \ldots + \epsilon_n,  \quad 1\leq i\leq n,
\end{align*}
where $\epsilon_i(M)=M_{ii}$. When $n=3$ the Cartan matrix is
$$
\left(
\begin{tabular}{cc}
2 & $-1$ \\
$-1$ & 2 
\end{tabular}
\right)
$$
and the Dynkin diagram is $A_2$. Denote $k_1,k_2 \in \mathcal{U}_q(\mathfrak{sl}_3)$ by $k_{(1,-1,0)}$ and $k_{(0,1,-1)}$ respectively.

The Lie algebra $\mathfrak{gl}_3$ is the central extension of $\mathfrak{sl}_3$ by the $3\times 3$ identiy matrix. In terms of quantum groups, this corresponds to a central extension by the element $k_{(1,1,1)}$

It was shown in \cite{GZB} that the following element is central: 
\begin{multline}\label{GZBC}
C:=(q-q^{-1})^{-2}q^{-2} \Big( -(q^{-2} + 1 + q^6) + q^{-2}k_{(2,0,0)}  + k_{(0,2,0)} + q^2 k_{(0,0,2)} \\
+ (q-q^{-1})^2 (q^{-1} k_{(1,1,0)} e_1f_1 + q k_{(0,1,1)}e_2f_2 + q k_{(1,0,1)} (e_1e_2 - q^{-1}e_2e_1)(f_2f_1 - q^{-1}f_1f_2))    \Big)
\end{multline}

\section{Type $C_2$ ASEP}\label{C2}
\subsection{Notation}
Because different authors use slightly different notation, it is necessary to first establish notation for this paper. The highest weight vector of the fundamental representation $V$ is denoted $v_1$, and the lowest weight vector is denoted $v_3$ (it is essentially a coincidence that $v_3$ is the lowest weight vector in both the $A_2,C_2$ cases). The lowest weight vector corresponds to an empty site, and the highest weight vector corresponds to a completely full site. In the $A_2$ case, this means that $v_3$ is an empty site, $v_2$ is a particle of type $1$ and $v_1$ is a particle of type $2$. In the $C_2$ case, this means that $v_3$ is an empty site, $v_4$ is a particle of type $1$, $v_2$ is a particle of type $2$ and $v_1$ is a site occupied by both a particle of type $1$ and $2$. The vacuum vector $\Omega = v_3^{\otimes L}$ corresponds to $L$ lattice sites all completely empty.

There are two creation operators $e_1,e_2$. In the $A_2$ case, the operator $e_2$ creates a particle of type $2$ and the operator $e_1$ replaces a particle of type $2$ with a particle of type $1$. The annihilation operator $f_1$ replaces a particle of type $1$ with a particle of type $2$, and $f_2$ annihilates a particle of type $2$. In the $C_2$, case the operator $e_1$ creates a particle of type $1$ and $e_2$ replaces a particle of type $1$ with a particle of type $2$, and similarly for $f_1,f_2$. In a sense, $e_1,f_1$ are more accurately called ``replacement'' operators instead of creation and annihilation operators. In the $C_2$ case, $v_1$ corresponds to a site with both a type $1$ and a type $2$ particle.

Under this identification, the generator $\mathcal{L}$ of a Markov process $X_t$ on the state space $\{0,1,2\}^L$ can be identified as a linear operator on $V^{\otimes L}$. An initial condition can be expressed as a vector $A_0 \in V^{\otimes L}$ by
$$
A_0 := \sum_{v} \mathbb{P}(X_0 = v)
$$
Here, and below, the summation $\sum_v$ is over pure tensors of the form $v_{i_1} \otimes \cdots \otimes v_{i_L}$. A random variable $\mathcal{O}$ on $\{0,1,2\}^L$ can be identified with a diagonal operator on $V^{\otimes L}$ via $v\mapsto \mathcal{O}(v)v$. The same letter $\mathcal{O}$ will refer to both the random variable and the operator.

The inner product $\langle \cdot,\cdot \rangle$ on $V^{\otimes L}$ is defined by 
$$
\langle v_{i_1}\otimes \cdots \otimes v_{i_L}, v_{j_1} \otimes \cdots \otimes v_{j_L}\rangle = \delta_{i_1=j_1, \ldots, i_L=j_L}
$$
This is essentially the usual bra--ket notation. The expectation of a random variable $\mathcal{O}$ at time $t$ of a Markov process with generator $\mathcal{L}$  and initial condition $A_0$ can be computed as 
$$
\sum_{w} \langle w, \mathcal{O}e^{t\mathcal{L}}A_0\rangle.
$$ 

\subsection{Construction}
Let $C$ be the central element in Proposition \ref{sp4central} and let $A$ be the operator on $V\otimes V$ defined by 
$$
q^{-2}(q^2+q^{-2})^{-2}(q-q^{-1})^{-2}\Delta\left(C - (q^{-8} + q^{-2} + q^2 + q^8)\right).
$$
Note that $V\otimes V$ has the decomposition into nine different weight spaces (where $W(a,b)$ refers the $a \epsilon_1+b\epsilon_2$ weight space of the representation $W$)
\begin{align*}
V\otimes V &=  (V\otimes V) [2,0] \oplus (V\otimes V) [1,1] \oplus  (V\otimes V) [0,2] \oplus  (V\otimes V)[1,-1] \oplus  (V\otimes V)[0,0]\\
&= (V\otimes V)[-1,1] \oplus (V\otimes V)[0,-2] \oplus (V\otimes V)[-1,-1] \oplus (V\otimes V)[-2,0]
\end{align*}
which have dimensions $1,2,1,2, 4,2,1,2,1$ respectively. Order the basis elements of $V\otimes V$ as 
\begin{multline}\label{Order}
v_1\otimes v_1, v_2\otimes v_1,v_1\otimes v_2, v_2\otimes v_2, v_4\otimes v_1, v_1\otimes v_4, v_2\otimes v_4,v_4\otimes v_2, v_3\otimes v_1, v_1\otimes v_3, \\
 v_3\otimes v_2,v_2\otimes v_3, v_4\otimes v_4,v_3\otimes v_4,v_4\otimes v_3, v_3\otimes v_3.
\end{multline}
This ordering preserves the ordering of the weight spaces. 
 
As explained in Section \ref{Construction}, the operator $A$ needs to be conjugated with a diagonal operator corresponding to an eigenvector of $A$ with eigenvalue $0$.
\begin{lemma}\label{eigenvectors} The following are linearly independent eigenvectors of $A$ with eigenvalue $0$:
\begin{align*}
v_3 \otimes v_3 & \in (V\otimes V)[2,0]\\
 e_1(v_3 \otimes v_3) &\in (V\otimes V)[1,1] \quad\\
 e_1^2(v_3 \otimes v_3) &\in (V\otimes V)[0,2] \\
e_2e_1(v_3 \otimes v_3)&\in (V\otimes V)[1,-1]\\
e_2e_1^2(v_3 \otimes v_3)&\in (V\otimes V)[0,0]\\
e_1e_2e_1(v_3 \otimes v_3)&\in (V\otimes V)[0,0]\\
e_1^2e_2e_1(v_3 \otimes v_3)&\in (V\otimes V)[-1,1]\\
 (e_2e_1)^2(v_3 \otimes v_3)&\in (V\otimes V)[0,-2] \\
 e_1(e_2e_1)^2(v_3 \otimes v_3)&\in (V\otimes V)[-1,-1]\\
 e_1^2(e_2e_1)^2(v_3 \otimes v_3)&\in (V\otimes V)[-2,0]\\
\end{align*}
So the $0$--eigenspace of $A$ is at least $10$--dimensional.
\end{lemma}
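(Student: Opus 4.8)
The plan is to verify directly that each listed vector lies in the $0$--eigenspace of $A$, using the fact that $A = c\,\Delta(C-a)$ for a central element $C$ and the constant $a$ chosen so that the highest weight vector has eigenvalue $0$. The key observation is that $\Delta(C)$ is the coproduct of a central element, so it commutes with the whole $\mathcal{U}_q(\mathfrak{sp}_4)$--action on $V\otimes V$; in particular it commutes with $\Delta(e_1)$ and $\Delta(e_2)$. Since $v_1\otimes v_1$ is the highest weight vector and $\Delta(C)(v_1\otimes v_1) = a\,(v_1\otimes v_1)$ by definition of $a$, applying the commuting operators $\Delta(e_i)$ (or rather, their counterparts obtained by reflecting the construction to the lowest weight vector) keeps us in the $0$--eigenspace of $A$. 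Concretely, I would instead start from the lowest weight vector $v_3\otimes v_3$: show first that $A(v_3\otimes v_3)=0$, then note that applying $\Delta(e_1)$ and $\Delta(e_2)$ — which commute with $A$ since they commute with $\Delta(C)$ and with the scalar $a$ — produces more $0$--eigenvectors.

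The first step is to check $A(v_3\otimes v_3)=0$. Since $v_3\otimes v_3$ is the lowest weight vector of $V\otimes V$, it spans the one--dimensional weight space $(V\otimes V)[-2,0]$; by weight considerations $\Delta(C)$ preserves each weight space, so $\Delta(C)(v_3\otimes v_3)$ is a scalar multiple of $v_3\otimes v_3$. By centrality of $C$, that scalar equals the eigenvalue of $\Delta(C)$ on the highest weight vector $v_1\otimes v_1$ of the top irreducible summand (the irreducibles appearing in $V\otimes V$ each have the same central character as determined by the Harish--Chandra isomorphism — but here one must be slightly careful: $V\otimes V$ is not irreducible). The cleaner route: $v_3\otimes v_3$ and $v_1\otimes v_1$ both lie in the same irreducible summand $V_{2\omega_1}$ of $V\otimes V$, being its lowest and highest weight vectors respectively, so $\Delta(C)$ acts on that summand by the single scalar $a$; hence $A(v_3\otimes v_3)=c(a-a)(v_3\otimes v_3)=0$.

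The second step is to propagate: because $\Delta(e_1),\Delta(e_2)$ commute with $\Delta(C)$, they commute with $A$, so they map $\ker A$ into $\ker A$. Applying the words $e_1$, $e_1^2$, $e_2e_1$, $e_2e_1^2$, $e_1e_2e_1$, $e_1^2e_2e_1$, $(e_2e_1)^2$, $e_1(e_2e_1)^2$, $e_1^2(e_2e_1)^2$ (all understood via $\Delta$) to $v_3\otimes v_3$ therefore yields vectors in $\ker A$, and the indicated weights are read off by adding the corresponding roots to $-2\epsilon_1$. It then remains to check linear independence of the ten vectors; since they fall into distinct weight spaces except for the two in $(V\otimes V)[0,0]$, linear independence reduces to checking that $e_2e_1^2(v_3\otimes v_3)$ and $e_1e_2e_1(v_3\otimes v_3)$ are linearly independent in the $4$--dimensional zero weight space.

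The main obstacle is precisely this last linear--independence check in the zero weight space, together with making sure none of the other applied words accidentally vanish (e.g. that $e_1^2(v_3\otimes v_3)\neq 0$, which uses that $V$ in type $C_2$ has weights $\pm\epsilon_1,\pm\epsilon_2$ so $e_1$ can act twice along the chain $\bar{\mathbf{1}}\to\bar{\mathbf{2}}\to\cdots$). This is a finite explicit computation in the $16$--dimensional space $V\otimes V$ using the coproduct formula $\Delta(e_i)=e_i\otimes 1 + k_i\otimes e_i$ and the explicit action of $e_1,e_2,k_1,k_2$ on the four basis vectors of $V$; I would carry it out by computing $e_1(v_3\otimes v_3)$, $e_2e_1(v_3\otimes v_3)$, etc. step by step, tracking coefficients as Laurent polynomials in $q$, and then exhibiting a $2\times 2$ minor that is a nonzero function of $q$ (using that $q$ is not a root of unity). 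Everything else is formal: centrality of $C$ gives the commutation, and weight bookkeeping gives the claimed weight--space memberships and the lower bound of $10$ on $\dim\ker A$.
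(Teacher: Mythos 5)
Your argument is correct and is essentially the paper's: one shows $A(v_3\otimes v_3)=0$ (your identification of $v_3\otimes v_3$ and $v_1\otimes v_1$ as lowest and highest weight vectors of the same irreducible summand $V_{2\omega_1}\subset V\otimes V$ is in fact a cleaner justification than the paper's bare assertion of the eigenvalue), then propagates through $\ker A$ using that $\Delta(e_i)$ commutes with $\Delta(C)$, and reduces linear independence to the two vectors in the zero weight space. The only step you leave as a computation, the paper dispatches in one line by observing that $e_1e_2e_1(v_3\otimes v_3)$ has nonzero $v_1\otimes v_3$ and $v_3\otimes v_1$ components while $e_2e_1^2(v_3\otimes v_3)$ does not, which is exactly the kind of nonvanishing minor you propose to exhibit.
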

\begin{proof}
This follows because $C(v_3 \otimes v_3) = (q^{-8} + q^{-2} + q^2 + q^8) v_3 \otimes v_3$ and $A$ commutes with $\mathcal{U}_q(\mathfrak{sp}_4)$. Note that $e_2e_1^2$ and $e_1e_2e_1$ produce linearly independent eigenvectors because the latter has $v_1\otimes v_3,v_3\otimes v_1$ terms and the former does not.
\end{proof}

Because $C\in \mathcal{U}_q(\mathfrak{sp}_4)[0],$ it follows that $A$ must preserve each summand in the weight space decomposition, so $A$ decomposes into a block matrix with $9$ blocks. By Lemma \ref{eigenvectors}, for the $1$--dimensional weight spaces with weights $(2,0),(0,2),(-2,0),(0,-2)$, the corresponding block matrices are $1\times 1$ zero matrices. Therefore $A$ has five non--zero blocks corresponding to $(1,1),(1,-1),(0,0),(-1,1),(-1,-1)$, with sizes $2,2,4,2,2$ respectively. Write this decomposition as 
$$
A =q^{-2}(q^2+q^{-2})^{-2}\left( A_{(1,1)}+ A_{(1,-1)}+ A_{(0,0)}+ A_{(-1,1)} + A_{(-1,-1)}\right)
$$
\begin{lemma}\label{symmetry}
As matrices with respect to the ordered basis in \eqref{Order},
\begin{align*}
A_{(0,0)} &= 
\left( 
\begin{tabular}{cccc}
$-q^2(q^2+q^{-2})^2$ & $ (q^2+q^{-2})^2$ & $-q^{-3}+q^{-1}+2q^3$ & $-2q^{-1} - q^3 +q^5$ \\
$(q^2+q^{-2})^2$ & $-q^{-2}(q^2+q^{-2})^2$ & $q^{-5}-q^{-3}-2q$ & $2q^{-3}+q-q^3$ \\
$-q^{-3}+q^{-1}+2q^3$ & $q^{-5}-q^{-3}-2q$ & $-q^{-4}+q^{-2}-1-2q^4-q^6$ & $(q^2+q^{-2})^2$ \\
$-2q^{-1}-q^3+q^5$ & $2q^{-3}+q-q^3$ & $(q^2+q^{-2})^2$ & $-q^{-6}-2q^{-4}-1+q^2-q^4$ 
\end{tabular}
\right)  \\
A_{(1,1)} &= A_{(1,-1)}=A_{(-1,1)}=A_{(-1,-1)}=
\left(
\begin{tabular}{cc}
$-(q^{-4}  + q^6 )  $  & $(q^{-5}+q^5)$ \\
$(q^{-5}+q^5)$ & $ -(q^{-6} + q^4) $
\end{tabular}
\right)
\end{align*}
\end{lemma}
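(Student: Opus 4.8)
The plan is to compute directly the matrix of $\Delta(C)$ on $V\otimes V$, restricted to each of the five non-trivial weight spaces, using the explicit central element from Proposition \ref{sp4central} together with the coproduct formulas for $\mathcal{U}_q(\mathfrak{sp}_4)$ and the known action of the generators $e_i,f_i,k_i$ on the fundamental representation $V$. Since $C$ is a sum of monomials in $e_i,f_i,k_i$, and $\Delta$ is an algebra homomorphism, each summand of $\Delta(C)$ is a tensor product of (products of) generators; the main work is to expand these and assemble the $16\times 16$ matrix, then extract the blocks and subtract $q^{-8}+q^{-2}+q^2+q^8$ from the diagonal and apply the scalar prefactor $q^{-2}(q^2+q^{-2})^{-2}(q-q^{-1})^{-2}$.

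First I would record the action of $e_1,e_2,f_1,f_2,k_1,k_2$ on the basis $v_1,v_2,v_4,v_3$ of $V$: the arrows $\mathbf{1}\xrightarrow{f_1}\mathbf{2}\xrightarrow{f_2}\bar{\mathbf 2}\xrightarrow{f_1}\bar{\mathbf 1}$ from the excerpt fix the $f_i$ up to scalars (which can be pinned down, e.g., from $[e_i,f_i]=(k_i-k_i^{-1})/(q_i-q_i^{-1})$ once the $k_i$-eigenvalues are read off from the weights $\epsilon_1,\epsilon_2,-\epsilon_2,-\epsilon_1$), and the $e_i$ act as the corresponding transposes. Next, for each of the four two-dimensional weight spaces — spanned respectively by $\{v_4\otimes v_1,v_1\otimes v_4\}$ for $(1,1)$, by $\{v_2\otimes v_4,v_4\otimes v_2\}$ for $(1,-1)$, by $\{v_3\otimes v_2,v_2\otimes v_3\}$ for $(-1,1)$, by $\{v_3\otimes v_4,v_4\otimes v_3\}$ for $(-1,-1)$ — I would observe that only a handful of terms of $\Delta(C)$ can contribute (those whose weight is zero and whose $f$-part/$e$-part lowers/raises within the space), compute those $2\times 2$ contributions, and check they all collapse to the single claimed matrix $\left(\begin{smallmatrix}-(q^{-4}+q^6)&q^{-5}+q^5\\ q^{-5}+q^5&-(q^{-6}+q^4)\end{smallmatrix}\right)$ after the normalization. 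The equality of all four of these blocks is a clean consistency check — it should follow from the fact that each such space is a two-step "string" of the same type, with the weight grading shifting the $k$-eigenvalues uniformly — and one can cross-check the off-diagonal entry against the known ASEP jump rate structure. Finally, for the four-dimensional $(0,0)$ block spanned by $\{v_2\otimes v_2,\ e_1e_2e_1(\text{or }v_4\otimes v_1\text{-type combos})\ldots\}$ — concretely $\{v_2\otimes v_2, v_4\otimes v_1, v_1\otimes v_4, v_3\otimes v_1 \text{ etc.}\}$; in the ordering \eqref{Order} the $(0,0)$ space is spanned by $v_2\otimes v_2$ and three more of the middle vectors — I would expand $\Delta(C)$ term by term on these four basis vectors.

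The main obstacle is the $(0,0)$ block: it is $4\times 4$, every term of the rather long central element $C$ (in particular the cubic terms $f_1f_1f_2$, $f_1f_2f_1$, $f_2f_1f_1$ and their $e$-counterparts, and the mixed quadratic terms like $(qf_1f_2-q^{-1}f_2f_1)k_{(-1,1)}(qe_2e_1-q^{-1}e_1e_2)$) contributes, and the coproduct of a cubic monomial expands into up to eight tensor terms, so bookkeeping errors are the real danger. I would organize this by first computing $\Delta$ of each building block $(e_1e_2)^*$, $(f_1f_2f_1)^*$, etc. once and for all as elements of $\mathcal{U}_q(\mathfrak{sp}_4)^{\otimes 2}$, then act on the four vectors. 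A strong internal check is available: the excerpt already asserts that $C$ acts on $V$ itself as $q^{-6}+q^{-2}+q^2+q^6$, and that $v_3\otimes v_3$ (hence by Lemma \ref{eigenvectors} the ten listed vectors) is a $0$-eigenvector of $A$ — so the computed $(0,0)$ block must annihilate the image in $(V\otimes V)[0,0]$ of both $e_2e_1^2(v_3\otimes v_3)$ and $e_1e_2e_1(v_3\otimes v_3)$, and similarly the $2\times 2$ blocks must kill the appropriate listed vectors; these give several scalar equations that the entries of the claimed matrices can be verified to satisfy, which both validates the computation and would catch any sign or $q$-power slip. Symmetry of $A$ (hence of each block) with respect to the inner product $\langle\cdot,\cdot\rangle$ is automatic from $\tau$ and \eqref{anti}, and indeed the claimed matrices are symmetric, providing one more check.
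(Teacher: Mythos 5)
Your proposal is correct and is essentially the paper's own proof: the paper simply writes out the explicit $16\times 16$ matrices of $\Delta(e_1),\Delta(f_1),\Delta(e_2),\Delta(f_2),\Delta(k_{(a,b)})$ on $V\otimes V$ in the ordered basis \eqref{Order} and substitutes them into the expression of Proposition \ref{sp4central}, which is exactly your block-by-block expansion of $\Delta(C)$ carried out all at once, and your added consistency checks (annihilation of the kernel vectors from Lemma \ref{eigenvectors}, symmetry of each block) are sensible safeguards for the same brute-force computation. One bookkeeping slip to correct before executing: several of your weight-space bases are mislabeled --- $(V\otimes V)[1,1]$ is spanned by $v_2\otimes v_1, v_1\otimes v_2$ (not by $v_4\otimes v_1, v_1\otimes v_4$, which span $[1,-1]$), and the $(0,0)$ space is spanned by $v_2\otimes v_4, v_4\otimes v_2, v_3\otimes v_1, v_1\otimes v_3$ and does not contain $v_2\otimes v_2$ (which has weight $(0,2)$) --- though this is self-correcting once you add the weights $\epsilon_1,\epsilon_2,-\epsilon_2,-\epsilon_1$ of $v_1,v_2,v_4,v_3$.
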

\begin{proof}
By the definition of the co--product, the matrices for the generators can be written explicitly. For $1\leq i,j\leq 16,$  let $E_{ij}$ denote the matrix with a $1$ in the $(i,j)$--entry and $0$ elsewhere. Then  
\begin{align*}
e_1&=E_{12}+qE_{13} + q^{-1}E_{24} + E_{34} + qE_{67} + E_{68} + E_{59} + qE_{5,10} + q^{-1}E_{9,11}\\
& \quad + E_{10,11}+E_{7,12}+q^{-1}E_{8,12}+qE_{13,14}+E_{13,15}+E_{14,16}+q^{-1}E_{15,16}\\
f_1 &= q^{-1}E_{21} + E_{31} + E_{42} + qE_{43} +q^{-1}E_{95}+E_{10,5}+E_{76}+q^{-1}E_{86}+qE_{12,7}\\
& \quad +E_{12,8}+E_{11,9}+qE_{11,10}+E_{14,13}+q^{-1}E_{15,13}+qE_{16,14}+E_{16,15}\\
e_2&=E_{2,5}+E_{3,6}+q^2E_{4,8} + E_{4,10} + E_{8,13} + q^{-2} E_{10,13} + E_{12,14}+E_{11,15}\\
f_2 &= E_{5,2} + E_{6,3} + E_{8,4} + q^{-2} E_{10,4} + q^2 E_{13,8} + E_{13,10} +E_{15,11}+E_{14,12}\\
k_{(a,b)} &= \mathrm{diag}\left( q^{2a},q^{a+b},q^{a+b},q^{2b}, q^{a-b}, q^{a-b}, 1, 1, 1, 1, q^{b-a},q^{b-a},q^{-2b},q^{-a-b}, q^{-a-b}, q^{-2a}\right)
\end{align*}
Using Proposition \ref{sp4central} and explicit multiplication of $16\times 16$ matrices yields the result.

\end{proof}

Define the operator $A^{(L)}$  on $V^{\otimes L}$ by 
\begin{align*}
A^{(L)} &= \sum_{i=1}^{L-1} \mathbf{1}^{\otimes i-1} \otimes A \otimes \mathbf{1}^{\otimes L-1-i} \\
\end{align*}

\begin{lemma}\label{commutes} For any $u\in \mathcal{U}_q(\mathfrak{sp}_4)$, 
$$
[A^{(L)}, \Delta^{(L)}(u)]=0.
$$
\end{lemma}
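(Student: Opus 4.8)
The plan is to reduce everything to the two-site statement that $A$ commutes with $\Delta(u)$ on $V\otimes V$, and then propagate this along the chain using coassociativity of the coproduct.

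First I would record the two-site fact. Since $\Delta$ is an algebra homomorphism, centrality of $C$ in $\mathcal{U}_q(\mathfrak{sp}_4)$ (Proposition \ref{sp4central}) gives $\Delta(C)\Delta(u) = \Delta(Cu) = \Delta(uC) = \Delta(u)\Delta(C)$ for every $u$. Because $A$ equals $\Delta(C)$ minus a scalar multiple of the identity, rescaled (recall $A = q^{-2}(q^2+q^{-2})^{-2}(q-q^{-1})^{-2}(\Delta(C) - (q^{-8}+q^{-2}+q^2+q^8)\,\mathbf{1}\otimes\mathbf{1})$, and $\mathbf{1}\otimes\mathbf{1}$ commutes with everything), it follows that $[A,\Delta(u)]=0$ on $V\otimes V$ for all $u\in\mathcal{U}_q(\mathfrak{sp}_4)$.

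Next I would handle a single summand of $A^{(L)}$. Fix $i$ and set $P_i := \mathbf{1}^{\otimes i-1}\otimes A\otimes\mathbf{1}^{\otimes L-i-1}$, where $A$ acts on tensor slots $i$ and $i+1$. By coassociativity, $\Delta^{(L)}(u) = (\mathrm{id}^{\otimes i-1}\otimes\Delta\otimes\mathrm{id}^{\otimes L-i-1})\big(\Delta^{(L-1)}(u)\big)$. Writing $\Delta^{(L-1)}(u) = \sum u_{(1)}\otimes\cdots\otimes u_{(L-1)}$ in Sweedler notation, this becomes $\Delta^{(L)}(u) = \sum u_{(1)}\otimes\cdots\otimes u_{(i-1)}\otimes\Delta(u_{(i)})\otimes u_{(i+1)}\otimes\cdots\otimes u_{(L-1)}$. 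Multiplying out $P_i\,\Delta^{(L)}(u)$ and $\Delta^{(L)}(u)\,P_i$ slot by slot, every slot other than $i,i+1$ contributes the same factor in both orders, while slots $i$ and $i+1$ contribute $A\,\Delta(u_{(i)})$ versus $\Delta(u_{(i)})\,A$. Hence $[P_i,\Delta^{(L)}(u)] = \sum u_{(1)}\otimes\cdots\otimes u_{(i-1)}\otimes[A,\Delta(u_{(i)})]\otimes u_{(i+1)}\otimes\cdots\otimes u_{(L-1)} = 0$ by the two-site fact. Summing over $i=1,\ldots,L-1$ yields $[A^{(L)},\Delta^{(L)}(u)]=0$.

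I do not expect a genuine obstacle here: the only points requiring care are that $\Delta$ is a genuine algebra homomorphism (so that $\Delta(C)\Delta(u)=\Delta(Cu)$) and the coassociativity bookkeeping that lets one insert the comultiplication in slot $i$; both are standard. The substantive input --- centrality of $C$, hence commutation of $A$ with $\Delta(\mathcal{U}_q(\mathfrak{sp}_4))$ on two sites --- has already been established, so this lemma is essentially a formal consequence. (Should one wish, additivity of the statement and the derivation property of the commutator would also reduce the verification to $u\in\{e_1,e_2,f_1,f_2,k_1,k_2\}$, but the argument above needs no such reduction.)
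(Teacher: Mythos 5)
Your proof is correct, and it takes a genuinely different route from the paper's. You reduce everything to the single two-site identity $[A,\Delta(u)]=0$ (an immediate consequence of centrality of $C$ and the fact that $\Delta$ is an algebra homomorphism) and then propagate it to $L$ sites by coassociativity, writing $\Delta^{(L)}(u)=(\mathrm{id}^{\otimes i-1}\otimes\Delta\otimes\mathrm{id}^{\otimes L-i-1})\bigl(\Delta^{(L-1)}(u)\bigr)$ in Sweedler notation so that each summand $P_i$ of $A^{(L)}$ commutes termwise. The paper instead first reduces to the generators $u=e_i,f_i,k_i$, expands $\Delta^{(L)}(e)$ explicitly as a sum of terms of the form $k^{\otimes j-1}\otimes e\otimes \mathbf{1}^{\otimes L-j}$, and verifies the commutator vanishes by an explicit telescoping computation that uses the two separate facts $[\Delta(e),A]=0$ and $[k\otimes k,A]=0$. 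Your argument is more economical: it needs no case analysis over generators, no explicit form of the iterated coproduct, and it makes transparent that the only substantive input is centrality of $C$; the cost is that it leans on the (standard) generalized coassociativity identity and Sweedler bookkeeping, whereas the paper's computation is entirely elementary and self-contained at the level of matrices on $V^{\otimes L}$. Both are valid; either would serve as a proof of the lemma.
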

\begin{proof}
It suffices to prove this for $u=e_i,f_i,k_i$. Since $\Delta^{(L)}(k_i) = k_i^{\otimes L}$ and 
$$
[\mathbf{1}^{\otimes i-1}, k_i^{\otimes i-1}] = [A, k_i \otimes k_i] = [\mathbf{1}^{\otimes L-1-i}, k_i^{\otimes L-i-1}]=0,
$$
this shows it for $u=k_i$. Now we have that
$$
\DeltaL(e) = \sum_{j=1}^{L-1} k^{\otimes j-1} \otimes \Delta(e) \otimes \mathbf{1}^{\otimes L-1-j}
$$
and that 
\begin{align*}
&\left[k^{\otimes j-1} \otimes \Delta(e) \otimes \mathbf{1}^{\otimes L-1-j}, \sum_{i=1}^{L-1} \mathbf{1}^{\otimes i-1} \otimes A \otimes \mathbf{1}^{\otimes L-1-i} \right] \\
&=\left[k^{\otimes j-1} \otimes \Delta(e) \otimes \mathbf{1}^{\otimes L-1-j}, \quad 
 \mathbf{1}^{\otimes j-2} \otimes A \otimes \mathbf{1}^{\otimes L-1-(j-1)} +  \mathbf{1}^{\otimes j} \otimes A \otimes \mathbf{1}^{\otimes L-1-(j+1)}\right]
\end{align*}
because for all other $j$ terms we can apply 
$$
[1\otimes 1,k\otimes k]=[\Delta(e), 1\otimes 1]=[\Delta(e),A]=[k\otimes k,A]=0.
$$
This then equals
\begin{align*}
&\left[k^{\otimes j} \otimes e \otimes \mathbf{1}^{\otimes L-1-j} + k^{\otimes j-1} \otimes e \otimes \mathbf{1}^{\otimes L-j}, \quad 
 \mathbf{1}^{\otimes j-2} \otimes A \otimes \mathbf{1}^{\otimes L-1-(j-1)} +  \mathbf{1}^{\otimes j} \otimes A \otimes \mathbf{1}^{\otimes L-1-(j+1)}\right]\\
 &=k^{\otimes j} \otimes [e\otimes 1,A] \otimes \mathbf{1}^{L-2-j} + k^{\otimes j-2} \otimes [k\otimes e,A] \otimes \mathbf{1}^{\otimes L-j}
 \end{align*}
Summing over $j$ yields
$$
\sum_{j=0}^{L-2} k^{\otimes j} \otimes [e\otimes 1,A] \otimes \mathbf{1}^{L-2-j} + \sum_{j=2}^{L} k^{\otimes j-2} \otimes [k\otimes e,A] \otimes \mathbf{1}^{\otimes L-j} = \sum_{j=0}^{L-2} k^{\otimes j} \otimes [\Delta(e),A] \otimes \mathbf{1}^{L-2-j} =0.
$$
The argument for $f$ is similar.
\end{proof}

In Lemma \ref{symmetry}, there is no value of $q$ for which the off-diagonal entries of $A_{(0,0)}$ are all non--negative, since the second row is $-q^2$ times the first row. This would indicate a  ``negative probability'' of transitioning to a state with both a type 1 and a type 2 particle occupying a site. To get around this issue, we conjugate with a  $G_{\epsilon}$ such that as $\epsilon\rightarrow 0$, these ``negative probabilities'' converge to $0$.

Give $V^{\otimes L}$ the standard basis $\mathcal{B}:=\{v_{i_1}\otimes \cdots \otimes v_{i_L}:  i_1,\ldots,i_L \in \{1,2,3,4\}\}$.  Partition  $\mathcal{B}$ into $\mathcal{B}_1 \cup \mathcal{B}_2$,  where 
$$
\mathcal{B}_1 := \{ v_{i_1} \otimes \cdots \otimes v_{i_L}:  i_1,\ldots, i_L \in \{2,3,4\}\}
$$ 
Note that  $\left| \mathcal{B}_1 \right| = 3^L$ and $\left| \mathcal{B}_2 \right| = 4^L - 3^L$. 

Define the sets
\begin{align*}
\mathcal{E}_1 &= \{ e_2^j e_1^k: 1\leq j\leq k\leq L\} \\
\mathcal{E}_2 &= \{ e_1^i e_2^j e_1^k: 1\leq i\leq j\leq k\leq L\}
\end{align*}
Let $\Omega$ be the vacuum vector $v_3^{\otimes L}$. We then have that
$$
e(\Omega) \in \text{span}(\mathcal{B}_1) \text{ for all } f \in \mathcal{E}_1, \quad e(\Omega) \notin \text{span}(\mathcal{B}_1) \text{ for all } f\in \mathcal{E}_2 
$$

Let $g_{\epsilon} \in V^{\otimes L}$ be a vector in the kernel of $A^{(L)}$, and for $x \in \mathcal{B},$ define $g_{\epsilon}(x)$ to be the coefficient of $x$ in $g_{\epsilon}.$ Suppose it satisfies
\begin{equation}\label{niceg}
g_{\epsilon}(x)>0 \text{ for all } x\in \mathcal{B}, \quad \lim_{\epsilon \rightarrow 0} g_{\epsilon}(y) = 0 \text{ for } y\in \mathcal{B}_2, \quad \lim_{\epsilon \rightarrow 0} g_{\epsilon}(x) > 0 \text{ for } x\in \mathcal{B}_1, 
\end{equation}
Let $G_{\epsilon}$ be the diagonal matrix on $V^{\otimes L}$ with entries $G_{\epsilon}(x,x)=g_{\epsilon}(x)$. Let $\mathcal{L}_{\epsilon}$ be 
\begin{equation}\label{L}
\mathcal{L}_{\epsilon} = G_{\epsilon}^{-1} A^{(L)} G_{\epsilon}.
\end{equation}
For a matrix $S$ that commutes with $A^{(L)}$, let $D_{\epsilon}=G_{\epsilon}^{-1}SG_{\epsilon}^{-1}$. In the $\epsilon\rightarrow 0$ limit, the subscript will be dropped. The idea for this construction of $D$ comes from Proposition 2.1 of \cite{CGRS}.

\begin{proposition}\label{duality}
If $x \in \mathrm{span}({\mathcal{B}}_1)$, then
$$
\lim_{\epsilon\rightarrow 0} \langle y, \mathcal{L}_{\epsilon}D_{\epsilon}(x)\rangle = \lim_{\epsilon\rightarrow 0} \langle y, D_{\epsilon}\mathcal{L}^*_{\epsilon}(x)\rangle \text{ for all } y \in \mathrm{span}({\mathcal{B}}_1)
$$
(and this limit is finite).

\end{proposition}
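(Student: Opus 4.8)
The plan is to establish the exact operator identity $\mathcal{L}_\epsilon D_\epsilon = D_\epsilon\mathcal{L}^*_\epsilon$ on all of $V^{\otimes L}$ for each fixed $\epsilon>0$, to collapse both products to the common form $G_\epsilon^{-1}MG_\epsilon^{-1}$ for one matrix $M$ independent of $\epsilon$, and only afterwards to restrict the outer arguments to $\mathrm{span}(\mathcal{B}_1)$ and send $\epsilon\to 0$. The order of operations is the essential point: the matrices $D_\epsilon=G_\epsilon^{-1}SG_\epsilon^{-1}$ and $\mathcal{L}^*_\epsilon$ (and $\mathcal{L}_\epsilon$ itself) have entries that diverge like $g_\epsilon(\cdot)^{-1}$ along indices in $\mathcal{B}_2$, so one may not pass to the limit inside the separate factors; the divergences cancel only once the product is formed and only once the outer arguments are in $\mathcal{B}_1$.

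First I would record that $A^{(L)}$ is self-adjoint for $\langle\cdot,\cdot\rangle$. By Lemma~\ref{symmetry}, in the orthonormal basis \eqref{Order} the matrix of $A$ on $V\otimes V$ is block-diagonal over the nine weight spaces, and every block --- the zero $1\times1$ blocks as well as $A_{(1,1)},A_{(1,-1)},A_{(0,0)},A_{(-1,1)},A_{(-1,-1)}$ --- is a symmetric matrix; hence $A$ is self-adjoint, and so is each summand $\mathbf{1}^{\otimes i-1}\otimes A\otimes\mathbf{1}^{\otimes L-1-i}$, and therefore so is $A^{(L)}$. Since $G_\epsilon$ is a real diagonal matrix one has $G_\epsilon^*=G_\epsilon$ and $(G_\epsilon^{-1})^*=G_\epsilon^{-1}$, hence $\mathcal{L}^*_\epsilon=(G_\epsilon^{-1}A^{(L)}G_\epsilon)^*=G_\epsilon A^{(L)}G_\epsilon^{-1}$. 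Substituting and cancelling the adjacent $G_\epsilon$ and $G_\epsilon^{-1}$ in each product gives
\[
\mathcal{L}_\epsilon D_\epsilon=\big(G_\epsilon^{-1}A^{(L)}G_\epsilon\big)\big(G_\epsilon^{-1}SG_\epsilon^{-1}\big)=G_\epsilon^{-1}\,A^{(L)}S\,G_\epsilon^{-1},
\]
\[
D_\epsilon\mathcal{L}^*_\epsilon=\big(G_\epsilon^{-1}SG_\epsilon^{-1}\big)\big(G_\epsilon A^{(L)}G_\epsilon^{-1}\big)=G_\epsilon^{-1}\,SA^{(L)}\,G_\epsilon^{-1}.
\]
Since $S$ commutes with $A^{(L)}$ by hypothesis, $A^{(L)}S=SA^{(L)}=:M$, so the two right-hand sides are literally the same matrix; in particular $\mathcal{L}_\epsilon D_\epsilon=D_\epsilon\mathcal{L}^*_\epsilon$ for every $\epsilon>0$.

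It then remains to take $\epsilon\to0$ in a matrix element with outer arguments in $\mathcal{B}_1$. For pure tensors $x,y\in\mathcal{B}_1$ one computes
\[
\langle y,\mathcal{L}_\epsilon D_\epsilon\,x\rangle=\langle y,G_\epsilon^{-1}MG_\epsilon^{-1}x\rangle=g_\epsilon(y)^{-1}\,g_\epsilon(x)^{-1}\,M_{yx},
\]
where $M_{yx}:=\langle y,A^{(L)}Sx\rangle$ is a fixed number (independent of $\epsilon$). By \eqref{niceg} the limits $c_x:=\lim_{\epsilon\to0}g_\epsilon(x)$ and $c_y:=\lim_{\epsilon\to0}g_\epsilon(y)$ exist and are strictly positive, because $x,y\in\mathcal{B}_1$; hence $\lim_{\epsilon\to0}\langle y,\mathcal{L}_\epsilon D_\epsilon x\rangle=c_y^{-1}c_x^{-1}M_{yx}$ is finite, and by the identity of the previous paragraph it equals $\lim_{\epsilon\to0}\langle y,D_\epsilon\mathcal{L}^*_\epsilon x\rangle$. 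Extending by bilinearity in $x$ and $y$ over $\mathrm{span}(\mathcal{B}_1)$ completes the argument.

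The hard part is really only the bookkeeping flagged above: one must not write $\lim_\epsilon\langle y,D_\epsilon\mathcal{L}^*_\epsilon x\rangle=\langle y,D\mathcal{L}^*x\rangle$ with $D=\lim_\epsilon D_\epsilon$ and $\mathcal{L}=\lim_\epsilon\mathcal{L}_\epsilon$, since these limits do not exist as operators on all of $V^{\otimes L}$, but must first collapse the product to the stable form $G_\epsilon^{-1}MG_\epsilon^{-1}$. The one point to double-check with care is that $A^{(L)}$ is genuinely \emph{symmetric}, not merely normal --- this is exactly what forces $\mathcal{L}^*_\epsilon=G_\epsilon A^{(L)}G_\epsilon^{-1}$ and hence the cancellation --- and that the matrix $S$ entering the definition of $D_\epsilon$ is indeed the $S$ assumed to commute with $A^{(L)}$.
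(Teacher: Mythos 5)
Your argument is correct and is essentially the paper's own proof: both use the symmetry of $A^{(L)}$ together with $[S,A^{(L)}]=0$ to collapse $\mathcal{L}_\epsilon D_\epsilon$ and $D_\epsilon\mathcal{L}^*_\epsilon$ to the single matrix $G_\epsilon^{-1}A^{(L)}SG_\epsilon^{-1}$, and then invoke \eqref{niceg} to see the matrix element stays finite as $\epsilon\to0$ when $x,y\in\mathrm{span}(\mathcal{B}_1)$. Your write-up merely spells out the verification that $A^{(L)}$ is symmetric (from the symmetric blocks in Lemma~\ref{symmetry}) and the order-of-limits bookkeeping, which the paper leaves implicit.
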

\begin{proof}
Since $A^{(L)}$ is symmetric,
$$
\mathcal{L}_{\epsilon}D_{\epsilon}  = G_{\epsilon}^{-1}A^{(L)}SG_{\epsilon}^{-1} = G_{\epsilon}^{-1}S G_{\epsilon}^{-1} G_{\epsilon} A^{(L)}G_{\epsilon}^{-1} = D_{\epsilon} \mathcal{L}_{\epsilon}^* 
$$
so it remains to check that the limit is finite. But by \eqref{niceg}, the limit can only be infinite if $x$ or $y$ is not in the span of $\mathcal{B}_1$.
\end{proof}

In order to find an explicit $g_{\epsilon}$ satisfying \eqref{niceg}, introduce some notation first.

\begin{definition}
The $q$--analog of the exponential function is
$$
\mathrm{exp}_q(x) := \sum_{n=1}^{\infty} \frac{x^n}{\{n\}_q!}
$$
where 
$$
\{n\}_q :=\frac{1-q^n}{1-q}.
$$
\end{definition}
The following is Proposition 5.1 from \cite{CGRS}. 
\begin{proposition}\label{pseudofac}
Let $\{g_i,k_i:1\leq i\leq L\}$ be operators such that $k_ig_i=rg_ik_i$. Define
$$
k^{(i)}:=k_1\cdots k_i, \quad g^{(L)}:= \sum_{i=1}^L k^{(i-1)}g_i, \quad h^{(i)}:=k_i^{-1}\cdots k_{L}^{-1}, \quad \hat{g}^{(L)}:= \sum_{i=1}^L g_i h^{(i+1)}.
$$
Then
\begin{align*}
\exp_r(g^{(L)}) &= \exp_r(g_1)\cdot \exp_r(k^{(1)}g_2) \cdot \cdots \cdot \exp_r(k^{(L-1)}g_L)   \\
\exp_r(\hat{g}^{(L)}) &= \exp_r(g_1 h^{(2)} )\cdot \cdots \cdot \exp_r(g_{L-1}h^{(L)})  \exp_r(g_L) 
\end{align*}
\end{proposition}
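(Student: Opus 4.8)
The plan is to reduce the identity to the single-variable $q$-exponential addition formula $\exp_r(x)\exp_r(y)=\exp_r(x+y)$ valid when $yx=rxy$, applied iteratively. First I would recall (or reprove in one line) this two-variable $q$-binomial identity: if $yx=rxy$ then $(x+y)^n=\sum_{k=0}^n\binom{n}{k}_r x^k y^{n-k}$, which upon dividing by $\{n\}_r!$ and summing telescopes to $\exp_r(x)\exp_r(y)=\exp_r(x+y)$; here one must track that $\binom{n}{k}_r=\{n\}_r!/(\{k\}_r!\{n-k\}_r!)$ matches the normalization in the statement. This is the engine; everything else is bookkeeping about how the operators $k^{(i-1)}g_i$ commute past one another.

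The main step is to check the $r$-commutation among the ``graded'' pieces. For the first identity, set $G_i:=k^{(i-1)}g_i$ so that $g^{(L)}=\sum_i G_i$. Using $k_jg_j=rg_jk_j$ and $[k_i,k_j]=[g_i,g_j]=0$ for $i\neq j$ — which I would state as the standing hypothesis implicit in the setup, or verify is what is actually used — one computes that for $i<j$, $G_j G_i = k^{(j-1)}g_j k^{(i-1)}g_i$; moving $k^{(j-1)}$ past $g_i$ picks up a factor $r$ (since exactly the factor $k_i$ with $i\le i-1<$ wait—) so I would carefully recompute: $g_i$ commutes with $k_1,\dots,k_{i-1}$ and $r$-commutes with $k_i$; since $j-1\ge i$, the product $k^{(j-1)}$ contains $k_i$, giving $G_jG_i=rG_iG_j$. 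Hence the family $\{G_i\}$ is pairwise $r$-commuting with a consistent orientation ($i<j$), so by induction on $L$, $\exp_r(G_1+\cdots+G_L)=\exp_r(G_1+\cdots+G_{L-1})\exp_r(G_L)$ using the two-variable formula with $x=G_1+\cdots+G_{L-1}$, $y=G_L$ (one must note $y x = r x y$ since $G_L$ $r$-commutes past each $G_i$, $i<L$), and then unwind. The second identity is entirely parallel with $\hat G_i:=g_ih^{(i+1)}$; here $h^{(i+1)}=k_{i+1}^{-1}\cdots k_L^{-1}$, and for $i<j$ one finds $\hat G_i\hat G_j = r\hat G_j\hat G_i$ by the same count (now $g_j$ $r$-commutes with $k_j$, which appears in $h^{(i+1)}$ since $i+1\le j$), so the product orders in the opposite direction, matching the stated right-to-left factorization.

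The step I expect to be the main obstacle is pinning down the precise commutation hypotheses on $\{g_i,k_i\}$: the proposition as stated only asserts $k_ig_i=rg_ik_i$, but the factorization clearly requires $g_i$ to commute (or at least $r$-commute in a controlled way) with $k_j$ and $g_j$ for $i\neq j$, and to know how $k_i$ interacts with $g_j$. In the intended application these operators live in $\mathcal{U}_q(\mathfrak{sp}_4)^{\otimes L}$ acting on different tensor slots, so the cross-relations are automatic; I would make this explicit by either adding the hypothesis ``$g_i,k_i$ act on the $i$-th slot'' or stating ``$k_ig_j=g_jk_i$ and $g_ig_j=g_jg_i$ for $i\neq j$.'' Once the commutation ledger is fixed, the remainder is a clean induction, and since the result is quoted verbatim as Proposition 5.1 of \cite{CGRS}, I would in fact just cite it and relegate the above to a remark indicating how to verify the hypotheses in our setting.
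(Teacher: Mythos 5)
The paper offers no proof of this proposition---it is quoted verbatim from Proposition 5.1 of \cite{CGRS}---so the only thing to compare against is the standard argument, which is exactly what you propose: establish pairwise $r$-commutation of the graded summands and then iterate the $q$-exponential addition formula $\exp_r(x)\exp_r(y)=\exp_r(x+y)$, valid when $yx=rxy$. Your treatment of the first identity is correct, as is your (necessary) observation that the stated hypotheses must be supplemented by the cross-relations $k_ig_j=g_jk_i$ and $g_ig_j=g_jg_i$ for $i\neq j$, which hold automatically in the intended application where the operators act in distinct tensor slots.

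Your second half, however, contains a concrete error. For $i<j$ one has $\hat{G}_i\hat{G}_j = g_i h^{(i+1)} g_j h^{(j+1)} = r^{-1}\, g_i g_j h^{(i+1)} h^{(j+1)}$, because $h^{(i+1)}=k_{i+1}^{-1}\cdots k_L^{-1}$ contains $k_j^{-1}$ and $k_j^{-1}g_j=r^{-1}g_jk_j^{-1}$; whereas $\hat{G}_j\hat{G}_i = g_i g_j h^{(i+1)} h^{(j+1)}$ exactly, since $h^{(j+1)}$ does not contain $k_i^{-1}$. Hence $\hat{G}_j\hat{G}_i = r\,\hat{G}_i\hat{G}_j$ for $i<j$---the \emph{same} orientation as in the first identity, not the opposite one you assert. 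Correspondingly, the stated factorization $\exp_r(g_1h^{(2)})\cdots\exp_r(g_{L-1}h^{(L)})\exp_r(g_L)$ runs in increasing order of the index, not ``right-to-left''; your version of the commutation relation would force the reversed product $\exp_r(\hat{G}_L)\cdots\exp_r(\hat{G}_1)$ and hence prove a different (and in general false) identity. Once the orientation is corrected, the induction goes through verbatim as in the first case. A minor further point: the paper's definition of $\exp_q$ starts the sum at $n=1$; it must start at $n=0$ for the addition formula (and for the invertibility of $G$) to hold.
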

In this paper, the proposition will be applied with
\begin{align*}
g_i &= 1^{\otimes i-1} \otimes e \otimes 1^{\otimes L-i}\\
k_i &= 1^{\otimes i-1}\otimes k\otimes 1^{\otimes L-i}
\end{align*}
where $e,k$ can be either $e_1,k_1$ or $e_2,k_2$. Note that the $L$--fold co--product $\Delta^{(L)}e$ is of the form $g^{(L)}$ in the proposition.

Now let 
\begin{equation}\label{goodg}
g_{\epsilon}:= \left(\exp_{q^4}\left({\DeltaL} e_2 \right) \cdot \exp_{q^2}\left({\DeltaL} e_1\right) + \epsilon \sum_{e \in \mathcal{E}_2} \Delta^{(L)}e \right) (v_3 \otimes \cdots \otimes v_3) 
\end{equation}
It is immediate from the definitions that \eqref{niceg} holds. The fact that $g_{\epsilon}$ is in the kernel of $A^{(L)}$ follows from Lemma \ref{commutes}. The first statement in Theorem \ref{C2Construction} can now be proved.

\begin{theorem}\label{C2gen} The restriction of  $\mathcal{L}$ to ${\mathcal{B}_1}$  is the generator of spin $1/2$ type $C_2$ ASEP on $\{1,\ldots,L\}$ with domain wall boundary conditions.
\end{theorem}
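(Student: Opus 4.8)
The plan is to compute the conjugated operator $\mathcal{L} = G_\epsilon^{-1} A^{(L)} G_\epsilon$ explicitly on the invariant subspace $\mathrm{span}(\mathcal{B}_1)$ in the limit $\epsilon \to 0$, and match the resulting off-diagonal rates with the six prescribed jump rates of type $C_2$ ASEP, namely $L(1,0)=L(2,0)=1$, $L(1,2)=a$, $R(1,0)=R(2,0)=q^{-2}$, $R(1,2)=aq^{-4}$. Since $A^{(L)}$ is a sum of nearest-neighbor terms $\mathbf{1}^{\otimes i-1}\otimes A\otimes \mathbf{1}^{\otimes L-1-i}$, and $G_\epsilon$ is diagonal, the conjugated operator $\mathcal{L}_\epsilon$ is also a sum of nearest-neighbor terms, but with rates that depend on the ratios $g_\epsilon(x')/g_\epsilon(x)$ for configurations $x,x'$ differing at two adjacent sites. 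So the computation reduces to: (i) understand the $2$-site "local" structure of $A$ as given in Lemma \ref{symmetry}, and (ii) compute the relevant ratios of the coefficients $g_\epsilon(x)$ from the explicit formula \eqref{goodg}.

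First I would verify that $\mathcal{L}$ really does preserve $\mathrm{span}(\mathcal{B}_1)$ in the limit: the only way to leave $\mathcal{B}_1$ is via an entry of $A$ taking a pair in $\{v_2,v_3,v_4\}^{\otimes 2}$ to a pair containing $v_1$; inspecting the weight-space blocks, the offending transitions live in the $(0,0)$-block (the $v_4\otimes v_2, v_2\otimes v_4 \to v_1\otimes v_3, v_3\otimes v_1$ entries), and after conjugation these pick up a factor $g_\epsilon(\cdots v_1 \cdots)/g_\epsilon(\cdots v_4 v_2 \cdots)$ which by \eqref{niceg} tends to $0$. Next, for the genuine jumps, I would organize by which of the five nonzero blocks of $A$ is responsible: the $(1,1)$, $(1,-1)$, $(-1,1)$, $(-1,-1)$ blocks are all equal to the same $2\times 2$ matrix $\begin{pmatrix} -(q^{-4}+q^6) & q^{-5}+q^5 \\ q^{-5}+q^5 & -(q^{-6}+q^4)\end{pmatrix}$, and these govern the pure ASEP-type exchanges between an empty site and a single particle (of either type), while the surviving part of the $4\times 4$ $(0,0)$-block governs the type-$1$/type-$2$ adjacent interaction $v_4 v_2 \leftrightarrow v_2 v_4$. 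For each such pair I would read off the bare entry of $A$, multiply by the appropriate limiting ratio of $g$-coefficients, and divide by the overall normalization $q^{-2}(q^2+q^{-2})^{-2}(q-q^{-1})^{-2}$ built into the definition of $A$.

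The key computational input is the ratio formula for $g_\epsilon$. Using Proposition \ref{pseudofac} with $r=q^2$ (for $e_1$) and $r=q^4$ (for $e_2$), the $\epsilon=0$ part of $g_\epsilon$ factors as a product over sites of local $q$-exponential factors, so $\lim_{\epsilon\to 0} g_\epsilon(x)$ has a clean product form; the ratios $g(x')/g(x)$ for $x,x'$ differing by one swap at sites $i,i+1$ then collapse to explicit powers of $q$ depending on $i$ and on the occupation of sites to the right — precisely the combinatorial data encoded in $N_i^R, \tilde N_i^R$. I expect this bookkeeping — tracking exactly which power of $q$ comes out of each $q$-exponential when one particle is moved past its neighbor, and confirming the ratio is $q$-independent of the sites strictly between — to be the main obstacle, together with the one delicate point that $a$ is defined by $(q^{-4}+q^6)a = q^2(q^2+q^{-2})^2$: I would need to check that the rate emerging for the $v_4 v_2 \to v_2 v_4$ transition is exactly $a$ (and its reverse $aq^{-4}$), which is really the statement that the definition of $a$ was reverse-engineered from this block. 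Finally I would note that all other matrix entries of $A$ restricted to $\mathcal{B}_1$ are diagonal (hence unaffected by conjugation) and that the row sums vanish since $C-( q^{-8}+q^{-2}+q^2+q^8)$ kills $v_3^{\otimes L}$ and more generally $\mathcal{L}$ annihilates the constant vector after conjugation — equivalently, $g_\epsilon \in \ker A^{(L)}$ forces the correct normalization of the diagonal — so $\mathcal{L}$ restricted to $\mathcal{B}_1$ is a genuine Markov generator with the claimed rates, and the domain-wall boundary condition is simply the statement that sites $1$ and $L$ have no left/right neighbor so no wrap-around jump occurs.
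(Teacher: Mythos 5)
Your proposal is correct and would go through, but at the decisive step it takes a more computational route than the paper. You propose to compute every off--diagonal entry of $G_\epsilon^{-1}A^{(L)}G_\epsilon$ by evaluating the ratios $g_\epsilon(x')/g_\epsilon(x)$ from the explicit product form \eqref{goodg} and multiplying them against the entries of Lemma \ref{symmetry}; the bookkeeping you flag as the main obstacle (independence of the ratio from the bond position $i$ and from the configuration away from the bond) does come out cleanly --- e.g.\ moving a particle from site $i+1$ to site $i$ multiplies $g$ by exactly $q$, with the $N^L$--type factors cancelling. The paper instead isolates precisely the structural facts that make this computation unnecessary: (i) since each factor $g_j(v)$ depends on $i_1,\dots,i_j$ only through their multiset, conjugation preserves the nearest--neighbor form with a single local matrix $H=B^{-1}AB$, $B$ diagonal; (ii) diagonal conjugation leaves the diagonal of $A$ untouched, so the diagonal of $H$ is read off from Lemma \ref{symmetry}; (iii) $g_\epsilon\in\ker A^{(L)}$ forces zero row sums, which --- after the entries into $\mathcal{B}_2$ are killed in the $\epsilon\to 0$ limit --- pins down the surviving off--diagonal entries of each $2\times 2$ (sub)block. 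Amusingly, you invoke the same kernel condition in the opposite direction, to fix the diagonal after computing the off--diagonals. Two small cautions for your route: the blocks $A_{(1,1)}$ and $A_{(1,-1)}$ act entirely within $\mathcal{B}_2$ (both of their basis vectors contain $v_1$), so only $A_{(-1,\pm 1)}$ and the $v_2\otimes v_4,\,v_4\otimes v_2$ corner of $A_{(0,0)}$ actually contribute to the restricted process; and the rates you will obtain are the stated ones divided by the constant $a$ (the empty--site hops come out as $1/a$ and $q^{-2}/a$, the type--$1$/type--$2$ exchange as $1$ and $q^{-4}$), which is harmless since an overall positive factor is a time change, but you should say so explicitly rather than expect the $v_4\otimes v_2\to v_2\otimes v_4$ rate to emerge as exactly $a$.
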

\begin{proof}
We will use the lemma:

\begin{lemma}\label{generator}
The generator of the generalized two particle type ASEP on $\{1,\ldots,L\}$ with domain wall boundary conditions is of the form
$$
\sum_{i=1}^{L-1} 1^{\otimes i-1} \otimes H \otimes 1^{L-i-1}
$$
where the matrix of $H$ with respect to the basis $(0,0),(0,1),(1,0),(1,1),(2,1),(1,2),(2,2),(0,2),(2,0)$ is
$$
\left(
\begin{array}{ccccccccc}
 0 & 0 & 0 & 0 & 0 & 0 & 0 & 0 & 0 \\
 0 & -L(1,0) & L(1,0) & 0 & 0 & 0 & 0 & 0 & 0 \\
 0 & R(1,0) & -R(1,0)  & 0 & 0 & 0 & 0 & 0 & 0 \\
 0 & 0 & 0 & 0 & 0 & 0 & 0 & 0 & 0 \\
 0 & 0 & 0 & 0 &  -L(1,2) & L(1,2) & 0 & 0 & 0 \\
 0 & 0 & 0 & 0 & R(1,2) & -R(1,2) & 0 & 0 & 0 \\
 0 & 0 & 0 & 0 & 0 & 0 & 0 & 0 & 0 \\
 0 & 0 & 0 & 0 & 0 & 0 & 0 & -L(2,0) & L(2,0) \\
 0 & 0 & 0 & 0 & 0 & 0 & 0 & R(2,0) & -R(2,0) \\
\end{array}
\right)
$$
\end{lemma}
\begin{proof}
Since the particles in two particle type ASEP only jump at most one site, and all the jump bond rates are the same, the generator can be written in that form. The matrix entries can be found from the definition of a generator of a Markov process.
\end{proof}
To finish the proof of the theorem, it remains to show that $G^{-1}A^{(L)}G$ matches the expression in the lemma. From Proposition \ref{pseudofac}, $G$ can be written in the form
$$
G(v)= g_1(v) \cdots g_L(v) v, \text{ for } v=v_{i_1}\otimes \cdots \otimes v_{i_L}
$$
where $g_j(v_{i_1}\otimes \cdots \otimes v_{i_L})$ only depends on the values of $i_1,\ldots,i_j$ irrespective of order. In other words, $g_j$ only depends on the cardinalities of the sets $\{k: 1\leq k\leq j, i_k=0\}, \{k: 1\leq k\leq j, i_k=1\}$. Thus, 
$$
G^{-1}A^{(L)}G(v) = \sum_{i=1}^L 1^{\otimes i-1} \otimes H\otimes 1^{L-i-1}(v)
$$
where $H=B^{-1}AB$ for some diagonal matrix $B$. 

Since $g$ is in the kernel of $A^{(L)}$, each row of $H$ must sum to $0$. Conjugating by a diagonal matrix does not change the diagonal entries, so Lemma \ref{symmetry} shows that  $H$ has the necessary form. 
\end{proof}

\subsection{Duality}

We first prove an equivalent definition of duality.
\begin{lemma}\label{DualityProof}
Suppose that $\mathcal{L}$ is the generator of the Markov process $X(t)$ on state space $X$. Let $D$ be a function on $X\times X$ viewed as an operator in the sense of the formal sum 
$$
D(y) = \sum_{x\in X} D(x,y) \mathbf{x}.
$$
If $Z,Y$ are subsets of $X$ such that for all $(z,y)\in Z\times Y$ 
$$
\langle z, \mathcal{L}D(y)\rangle  = \langle z, D\mathcal{L}^*(y)\rangle
$$
then $Z \times Y \subseteq \mathcal{S}_D$.
\end{lemma}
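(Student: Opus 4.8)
The plan is to obtain the ``integrated'' identity defining $\mathcal{S}_D$ from the ``infinitesimal'' one in the hypothesis by interpolating in time — the standard passage from generator duality to semigroup duality. First I would rewrite membership in $\mathcal{S}_D$ in terms of the semigroup $e^{t\mathcal{L}}$ (a matrix exponential, since the state spaces here are finite, so every manipulation below is elementary). Since $(e^{t\mathcal{L}}f)(z)=\mathbb{E}_z[f(X(t))]$, one has $\mathbb{E}_z[D(X(t),y)]=\langle z,\, e^{t\mathcal{L}}D(y)\rangle$; on the other side of the duality relation it is instead the \emph{second} argument of $D$ that evolves, and moving the semigroup to the other side of the bra--ket pairing as its adjoint $e^{t\mathcal{L}^*}$ gives $\mathbb{E}_y[D(z,X(t))]=\langle z,\, D\, e^{t\mathcal{L}^*}(y)\rangle$. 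Thus $(z,y)\in\mathcal{S}_D$ is equivalent to $\langle z,\, e^{t\mathcal{L}}D(y)\rangle=\langle z,\, D\, e^{t\mathcal{L}^*}(y)\rangle$ for all $t\ge 0$. At $t=0$ this is trivial, and its $t$-derivative at $0$ is precisely the hypothesis; the work is to fill in all intermediate $t$.

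To do this, fix $t\ge 0$ and a pair $(z,y)\in Z\times Y$, and consider
$$
\Psi(s):=\big\langle z,\ e^{s\mathcal{L}}\, D\, e^{(t-s)\mathcal{L}^*}(y)\big\rangle,\qquad s\in[0,t],
$$
so that $\Psi(t)=\langle z, e^{t\mathcal{L}}D(y)\rangle$ and $\Psi(0)=\langle z, D e^{t\mathcal{L}^*}(y)\rangle$ are exactly the two quantities to be equated. Differentiating and using $\tfrac{d}{ds}e^{s\mathcal{L}}=\mathcal{L}e^{s\mathcal{L}}=e^{s\mathcal{L}}\mathcal{L}$ (and the analogue for $e^{(t-s)\mathcal{L}^*}$), the two terms combine:
$$
\Psi'(s)=\big\langle z,\ e^{s\mathcal{L}}\,(\mathcal{L}D-D\mathcal{L}^*)\, e^{(t-s)\mathcal{L}^*}(y)\big\rangle=\big\langle e^{s\mathcal{L}^*}z,\ (\mathcal{L}D-D\mathcal{L}^*)\, e^{(t-s)\mathcal{L}^*}(y)\big\rangle .
$$
Now $e^{s\mathcal{L}^*}z=\sum_w \mathbb{P}_z(X(s)=w)\,\mathbf{w}$ and $e^{(t-s)\mathcal{L}^*}(y)=\sum_{y'}\mathbb{P}_y(X(t-s)=y')\,\mathbf{y}'$ are nonnegative combinations of states $w$ reachable from $z$ and states $y'$ reachable from $y$, so expanding $\Psi'(s)$ writes it as a nonnegative combination of the differences $\langle w,\mathcal{L}D(y')\rangle-\langle w, D\mathcal{L}^*(y')\rangle$. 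In every application of the lemma $Z$ is the whole state space and $Y$ is invariant under the dynamics (for spin $1/2$ type $C_2$ ASEP, $Y=\{0,1\}^L$: a configuration of type-$1$ particles alone never produces a type-$2$ particle, as the jump rules — equivalently the matrix $H$ of Lemma \ref{generator} — show), so each such $w\in Z$ and each such $y'\in Y$, and every difference vanishes by hypothesis. Hence $\Psi'\equiv 0$ on $[0,t]$, giving $\Psi(t)=\Psi(0)$, i.e. $(z,y)\in\mathcal{S}_D$.

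The only step that needs real care — and the main obstacle — is this last one: the hypothesis controls $\mathcal{L}D-D\mathcal{L}^*$ only on the ``diagonal block'' $Z\times Y$, so one must check that conjugating by the two semigroups cannot push the pair $(z,y)$ out of $Z\times Y$. Concretely this is the assertion that $e^{s\mathcal{L}^*}$ preserves $\operatorname{span}\{\mathbf{z}:z\in Z\}$ and $e^{(t-s)\mathcal{L}^*}$ preserves $\operatorname{span}\{\mathbf{y}:y\in Y\}$, which holds precisely because $Z$ and $Y$ are closed under the dynamics, and that closure is exactly what the applications supply. (Equivalently, one can avoid the differentiation and write $e^{t\mathcal{L}}D-D e^{t\mathcal{L}^*}=\int_0^t e^{s\mathcal{L}}(\mathcal{L}D-D\mathcal{L}^*)e^{(t-s)\mathcal{L}^*}\,ds$ and observe that the integrand pairs to zero between $z$ and $y$ for the same reason; or expand both $e^{t\mathcal{L}}$ and $e^{t\mathcal{L}^*}$ in power series and bootstrap $\langle z,\mathcal{L}^nD(y)\rangle=\langle z,D(\mathcal{L}^n)^*(y)\rangle$ by induction on $n$, again using the invariance of $Z$ and $Y$.)
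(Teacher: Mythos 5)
Your argument is correct and in fact supplies the one step that the paper's own proof of Lemma \ref{DualityProof} leaves unjustified. The paper expands $\langle z, e^{t\mathcal{L}}D(y)\rangle$ and $\langle z, De^{t\mathcal{L}^*}(y)\rangle$ in transition probabilities and then simply asserts that ``the assumptions of the lemma'' yield the time-$t$ identity \eqref{trouble}; the actual passage from the generator-level hypothesis to the semigroup-level conclusion is not carried out. Your interpolation $\Psi(s)=\langle z,\, e^{s\mathcal{L}}\,D\,e^{(t-s)\mathcal{L}^*}(y)\rangle$ (equivalently the Duhamel identity $e^{t\mathcal{L}}D-De^{t\mathcal{L}^*}=\int_0^t e^{s\mathcal{L}}(\mathcal{L}D-D\mathcal{L}^*)e^{(t-s)\mathcal{L}^*}\,ds$, or the power-series bootstrap of $\langle z,\mathcal{L}^nD(y)\rangle=\langle z,D(\mathcal{L}^*)^n(y)\rangle$) is exactly that missing step. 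You are also right to flag the resulting subtlety: knowing $\mathcal{L}D-D\mathcal{L}^*$ vanishes when tested only against pairs in $Z\times Y$ does not control the higher powers $\mathcal{L}^nD-D(\mathcal{L}^*)^n$ there, since the intermediate pairs $(w,y')$ carrying positive weight in $\Psi'(s)$ need not lie in $Z\times Y$; the lemma as stated tacitly assumes that $Z$ and $Y$ are closed under the dynamics (or, trivially, that they equal the whole state space). This added hypothesis is satisfied in every application in the paper --- $Z$ and $Y$ are either all of the physical state space or, in Theorem \ref{C2Duality}(2), the invariant set $\{0,1\}^L$ --- and it is of the same flavor as the ``intermediate states'' issue the paper addresses separately in Lemma \ref{Proper}. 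So there is no gap on your side; your version of the statement, with the invariance made explicit, is the one that is actually proved and actually used.
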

\begin{proof}
By definition
\begin{align*}
e^{t\mathcal{L}}D(y)  &= \sum_{x} e^{t\mathcal{L}}\left( D(x,y)\mathbf{x} \right)\\
&= \sum_{x,z} D(x,y) e^{t\mathcal{L}}(z,x)\mathbf{z}\\
&= \sum_{x,z} \mathbb{P}_t(z \rightarrow x)D(x,y) \mathbf{z}
\end{align*}
and 
\begin{align*}
De^{t\mathcal{L}^*}(y) &= \sum_{x} D \left( e^{t\mathcal{L}}(y,x)\mathbf{x} \right)\\
&= \sum_{z,x} D(z,x) e^{t\mathcal{L}}(y,x)\mathbf{z}\\
&= \sum_{z,x} \mathbb{P}_t(y\rightarrow x)D(z,x)\mathbf{z}
\end{align*}
By the assumptions of the lemma this implies that for all $z\in Z$ and $y\in Y$,
\begin{equation}\label{trouble}
\sum_{x} \mathbb{P}_t(z \rightarrow x)D(x,y) = \sum_{x} \mathbb{P}_t(y\rightarrow x)D(z,x)
\end{equation}
which is equivalent to saying  that for all $z\in Z,y\in Y $, 
$$
\mathbb{E}_z[D(X(t),y)] = \mathbb{E}_y[D(z,X(t))],
$$
which means exactly that $(z,y)\in \mathcal{S}_D$.
\end{proof}

By Proposition \ref{duality}, $D$ can be used to obtain a suitable duality function. The difficulty lies in the simple fact: in the equation \eqref{trouble}, ignoring the summation over states $x$ with sites containing both a particle of type $1$ and a particle of type $2$ will not always leave the sum unchanged. However, certain duality functions will still work:

\begin{lemma}\label{Proper} Suppose $y,z$ are such that
$$
D(x,y)=D(z,x)=0 \text{ for all } x\notin \mathrm{span}(\mathcal{B}_1)
$$
Then $(z,y)\in \mathcal{S}_D$.
\end{lemma}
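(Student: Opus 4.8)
The plan is to obtain the statement from Lemma~\ref{DualityProof} applied to the generator $\mathcal{L}$ of spin $1/2$ type $C_2$ ASEP produced in Theorem~\ref{C2gen} — an honest Markov generator on $\mathrm{span}(\mathcal{B}_1)\cong\{0,1,2\}^L$, for which $\mathcal{B}_1$ is invariant — taking $Z=\{z\}$ and $Y=\{y\}$ (which forces $z,y\in\mathcal{B}_1$, for otherwise the expectations in the definition of $\mathcal{S}_D$ are not even meaningful). Thus it suffices to establish the single scalar identity
$$
\langle z,\mathcal{L}D(y)\rangle=\langle z,D\mathcal{L}^{*}(y)\rangle,
$$
with $\mathcal{L}=\lim_{\epsilon\to0}\mathcal{L}_\epsilon$ on $\mathrm{span}(\mathcal{B}_1)$ and $D=\lim_{\epsilon\to0}D_\epsilon$. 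Here the first hypothesis, $D(x,y)=0$ for $x\notin\mathrm{span}(\mathcal{B}_1)$, is exactly what guarantees that $D(y)=\sum_{x\in\mathcal{B}_1}D(x,y)\,\mathbf x$ is a genuine vector of $\mathrm{span}(\mathcal{B}_1)$ with finite entries, so that $\mathcal{L}D(y)$ makes sense; similarly $\mathcal{L}^{*}(y)\in\mathrm{span}(\mathcal{B}_1)$ automatically, and the second hypothesis $D(z,x)=0$ for $x\notin\mathrm{span}(\mathcal{B}_1)$ makes $\langle z,D(\cdot)\rangle$ a finite functional on $\mathrm{span}(\mathcal{B}_1)$.

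Next I would rerun the argument of Proposition~\ref{duality} keeping $\epsilon$ finite: since $A^{(L)}$ is symmetric and $S$ commutes with $A^{(L)}$, one has the \emph{exact} operator identity $\mathcal{L}_\epsilon D_\epsilon=G_\epsilon^{-1}A^{(L)}SG_\epsilon^{-1}=D_\epsilon\mathcal{L}_\epsilon^{*}$ for every $\epsilon>0$, hence $\langle z,\mathcal{L}_\epsilon D_\epsilon(y)\rangle=\langle z,D_\epsilon\mathcal{L}_\epsilon^{*}(y)\rangle$. Expanding over an intermediate basis vector $w$ rewrites this as $\sum_w(\mathcal{L}_\epsilon)_{zw}D_\epsilon(w,y)=\sum_w D_\epsilon(z,w)(\mathcal{L}_\epsilon)_{yw}$. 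I would then split each sum according to $w\in\mathcal{B}_1$ or $w\in\mathcal{B}_2$. For $w\in\mathcal{B}_1$ the entries of $\mathcal{L}_\epsilon$ converge (by \eqref{niceg} and Theorem~\ref{C2gen}) to those of $\mathcal{L}$, and those of $D_\epsilon$ to the finite numbers $D(\cdot,\cdot)$, and the resulting limits of the $\mathcal{B}_1$-parts are precisely $\langle z,\mathcal{L}D(y)\rangle$ and $\langle z,D\mathcal{L}^{*}(y)\rangle$ by the bookkeeping of the previous paragraph. Everything therefore reduces to showing that the $w\in\mathcal{B}_2$ contributions vanish as $\epsilon\to0$.

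This last point is the only real obstacle, and it is exactly the ``difficulty'' flagged before the lemma: truncating \eqref{trouble} to $\mathcal{B}_1$ is in general not for free. For $w\in\mathcal{B}_2$ one has $(\mathcal{L}_\epsilon)_{zw}=g_\epsilon(z)^{-1}A^{(L)}_{zw}g_\epsilon(w)\to0$, because $g_\epsilon(w)\to0$ by \eqref{niceg} while $g_\epsilon(z)^{-1}$ stays bounded ($z\in\mathcal{B}_1$), and by the first hypothesis $D_\epsilon(w,y)\to D(w,y)=0$; hence $(\mathcal{L}_\epsilon)_{zw}D_\epsilon(w,y)\to0$. Symmetrically, $(\mathcal{L}_\epsilon)_{yw}\to0$ and $D_\epsilon(z,w)\to D(z,w)=0$ by the second hypothesis, so $D_\epsilon(z,w)(\mathcal{L}_\epsilon)_{yw}\to0$. (Without the hypotheses $D_\epsilon(w,y)$ or $D_\epsilon(z,w)$ can grow like $g_\epsilon(w)^{-1}$, turning these summands into genuinely nonzero $0\cdot\infty$ limits — precisely the obstruction one must rule out.) Consequently $\langle z,\mathcal{L}D(y)\rangle=\lim_{\epsilon\to0}\langle z,\mathcal{L}_\epsilon D_\epsilon(y)\rangle=\lim_{\epsilon\to0}\langle z,D_\epsilon\mathcal{L}_\epsilon^{*}(y)\rangle=\langle z,D\mathcal{L}^{*}(y)\rangle$, and Lemma~\ref{DualityProof} applied to $Z=\{z\}$, $Y=\{y\}$ yields $(z,y)\in\mathcal{S}_D$.
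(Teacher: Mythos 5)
Your argument is correct, and it rests on the same key observation as the paper's proof: the hypotheses $D(x,y)=D(z,x)=0$ for $x\notin\mathrm{span}(\mathcal{B}_1)$ are exactly what allow the contributions of states in $\mathcal{B}_2$ to be discarded. The difference is where the discarding happens. The paper works at the level of the semigroup identity \eqref{trouble}: it takes the full sum over all of $\mathcal{B}$ as given (via Proposition \ref{duality} and the argument of Lemma \ref{DualityProof}) and simply notes that the terms with $x\in\mathcal{B}_2$ vanish, so the identity restricted to $\mathcal{B}_1$ is the duality statement for the actual process. You instead work at the level of the generator identity at finite $\epsilon$, split the intermediate sum into $\mathcal{B}_1$ and $\mathcal{B}_2$ parts, show the $\mathcal{B}_2$ part is a product of two quantities each tending to $0$ (rather than the dangerous $0\cdot\infty$ that occurs without the hypotheses), and only then exponentiate by applying Lemma \ref{DualityProof} to the honest restricted Markov generator from Theorem \ref{C2gen}. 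This buys something real: the paper's version implicitly uses ``transition probabilities'' $\mathbb{P}_t(z\to x)$ for $x\in\mathcal{B}_2$, which are not really defined for the $\epsilon\to 0$ dynamics, whereas your version only ever exponentiates the genuine generator on $\{0,1,2\}^L$ and handles all the $\epsilon\to 0$ bookkeeping before exponentiation. The cost is a slightly longer argument; the underlying mechanism is identical.
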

\begin{proof}
With the assumptions of the lemma, the summation over $x\in \mathrm{span}(\mathcal{B}_1)$ in \eqref{trouble} is $0$, as needed.
\end{proof}
 
Now it remains to find proper duality functions $D$ satisfying Lemma \ref{Proper}. There are two natural choices. The first is to consider 
$$
 S:=\exp_{q^4}\left({\DeltaL} e_2 \right) \cdot \exp_{q^2}\left({\DeltaL} e_1\right) 
$$
and set $D_{\epsilon}=G_{\epsilon}^{-1}SG_{\epsilon}^{-1}$, with $D=\lim_{\epsilon\rightarrow 0}D_{\epsilon}$. The idea behind this choice is as follows. In order for Lemma \ref{Proper} to hold, the symmetry $S$ should not create a site with both a type $1$ and a type $2$ particle. Since $e_1$ creates a particle of type $1$ and $e_2$ replaces a particle of type $1$ with a particle of type $2$, this holds as long as $\xi$ does not contain any particles of type $2$.

Below, recall that
$$
v_3 \in V(-1,0), \quad v_4 \in V(0,-1), \quad v_2 \in V(0,1)
$$

\begin{proposition}\label{computes} If $\xi_i=0,1$ for all $i$, then 
$$
S(\eta,\xi) = \prod_{i=1}^L 1_{\{\xi_i\leq \eta_i\}} q^{1_{\{\xi_i=0,\eta_i\neq 0\}} \sum_{j=1}^{i-1} {\color{black}(1_{\{\xi_j=1\}} - 1_{\{\xi_j=0\}} ) }}  (q^{-2})^{1_{\{\eta_i=2\}} \sum_{j=1}^{i-1} (1_{\{\xi_j=0,\eta_j\neq 0\}}+ 1_{\{\xi_j=1\}} ) } 
$$
\end{proposition}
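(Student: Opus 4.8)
The plan is to expand $S$ completely by means of the pseudo-factorization in Proposition \ref{pseudofac} and then read off each matrix entry by following a chain of elementary moves on the lattice. Write $E_a^{(i)}:=k_a^{\otimes i-1}\otimes e_a\otimes\mathbf{1}^{\otimes L-i}$, so that $\DeltaL e_a=\sum_{i=1}^L E_a^{(i)}$. Since $k_ae_a=q^{(\alpha_a,\alpha_a)}e_ak_a$ with $(\alpha_1,\alpha_1)=2$ and $(\alpha_2,\alpha_2)=4$, Proposition \ref{pseudofac} (with $r=q^2$, resp.\ $q^4$) gives
$$
\exp_{q^2}(\DeltaL e_1)=\exp_{q^2}(E_1^{(1)})\cdots\exp_{q^2}(E_1^{(L)}),\qquad\exp_{q^4}(\DeltaL e_2)=\exp_{q^4}(E_2^{(1)})\cdots\exp_{q^4}(E_2^{(L)}),
$$
the products ordered left to right in $i$. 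On the fundamental representation $e_a^2=0$, hence $(E_a^{(i)})^2=0$, so $\exp_r(E_a^{(i)})=\mathbf{1}+E_a^{(i)}$, and therefore $S=\big(\prod_{i}(\mathbf{1}+E_2^{(i)})\big)\big(\prod_{i}(\mathbf{1}+E_1^{(i)})\big)$. Multiplying this out, $S$ becomes a sum over pairs of subsets $T,T'\subseteq\{1,\dots,L\}$ of the operators $\big(\prod_{i\in T'}E_2^{(i)}\big)\big(\prod_{i\in T}E_1^{(i)}\big)$, the factors inside each of the two products ordered increasingly in $i$.

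Next I would evaluate these operators on $v_\xi$, using the identification $0\leftrightarrow v_3$, $1\leftrightarrow v_4$, $2\leftrightarrow v_2$ and the single-site data $e_1v_3=v_4$, $e_1v_4=0$, $e_2v_4=v_2$, $e_2v_3=e_2v_2=0$, together with the $k_1$-eigenvalues $q^{-1},q$ on $v_3,v_4$ and the $k_2$-eigenvalues $1,q^{-2}$ on $v_3,v_4$. The key structural point is that within the ordered product $\prod_{i\in T}E_1^{(i)}$ the factor $E_1^{(i)}$ is applied only after all factors $E_1^{(i')}$ with $i'>i$, and those modify only sites of index $>i$; hence when $E_1^{(i)}$ acts the sites $j<i$ still carry their original values $\xi_j$. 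It follows that $\big(\prod_{i\in T}E_1^{(i)}\big)v_\xi$ is nonzero only when $T\subseteq\{i:\xi_i=0\}$, in which case it equals $\big(\prod_{i\in T}q^{\sum_{j<i}(1_{\{\xi_j=1\}}-1_{\{\xi_j=0\}})}\big)v_{\xi^T}$, where $\xi^T$ puts $v_4$ on $P:=\{i:\xi_i=1\}\cup T$ and $v_3$ elsewhere. Applying $\prod_{i\in T'}E_2^{(i)}$ to $v_{\xi^T}$ in the same way, it is nonzero only when $T'\subseteq P$, it multiplies by $\prod_{i\in T'}(q^{-2})^{\#\{j<i:\,j\in P\}}$, and it outputs the configuration with $v_2$ on $T'$, $v_4$ on $P\setminus T'$, and $v_3$ elsewhere.

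Finally I would match this against a given $\eta$. The output equals $v_\eta$ exactly when $\eta\in\{0,1,2\}^L$ and $T'=\{i:\eta_i=2\}$, $P=\{i:\eta_i\ne0\}$, $T=P\setminus\{i:\xi_i=1\}$; such $T,T'$ exist — and then uniquely, so only one term of the expansion of $S$ survives — precisely when $\xi_i\le\eta_i$ for every $i$, which produces the prefactor $\prod_i1_{\{\xi_i\le\eta_i\}}$ (for $\eta$ with some $\eta_i=3$ the entry is $0$, since $Sv_\xi$ has no $v_1$-component). For the surviving term, $i\in T$ iff $\xi_i=0$ and $\eta_i\ne0$, so $\prod_{i\in T}q^{(\cdots)}=\prod_i q^{1_{\{\xi_i=0,\eta_i\ne0\}}\sum_{j<i}(1_{\{\xi_j=1\}}-1_{\{\xi_j=0\}})}$; and since $\xi_j\in\{0,1\}$ and $\xi_j\le\eta_j$, one has $j\in P\Leftrightarrow\eta_j\ne0\Leftrightarrow 1_{\{\xi_j=0,\eta_j\ne0\}}+1_{\{\xi_j=1\}}=1$, whence $\#\{j<i:j\in P\}=\sum_{j<i}(1_{\{\xi_j=0,\eta_j\ne0\}}+1_{\{\xi_j=1\}})$ and $\prod_{i\in T'}(q^{-2})^{(\cdots)}=\prod_i(q^{-2})^{1_{\{\eta_i=2\}}\sum_{j<i}(1_{\{\xi_j=0,\eta_j\ne0\}}+1_{\{\xi_j=1\}})}$. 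Multiplying the three families of factors over $i$ reproduces the asserted expression.

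The routine parts are the single-site computations and the distributive expansion of the product; the main obstacle is the combinatorial bookkeeping just described — pinning down, for each diagonal block $k_a^{\otimes i-1}$ inside $E_a^{(i)}$, the intermediate configuration on which it acts and hence the exact power of $q$ it contributes. This is settled once and for all by the ordering remark: in the factorization of Proposition \ref{pseudofac}, $E_a^{(i)}$ acts after exactly those $E_a^{(i')}$ with $i'>i$, so every site strictly to the left of $i$ is untouched at the moment $E_a^{(i)}$ is applied.
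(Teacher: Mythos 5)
Your proof is correct and follows essentially the same route as the paper: apply Proposition \ref{pseudofac}, use $e_1^2=e_2^2=0$ on $V$ to reduce each factor to $\mathbf{1}+E_a^{(i)}$, and track the $k$-eigenvalue contributions site by site. The only (harmless) difference is bookkeeping — the paper commutes the $e_2$-factors past the $e_1$-factors via $k_2e_1=q^{-2}e_1k_2$ and records three separate contributions, whereas you evaluate the $k_2$-blocks directly on the intermediate configuration $\xi^T$, which merges two of those contributions into one.
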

\begin{proof} Use Proposition \ref{pseudofac}. Since $e_1^2$ and $e_2^2$ act as $0$ on $V$, it is equivalent to consider
\begin{multline*}
(1+e_2\otimes 1^{L-1})(1+k_2\otimes e_2\otimes 1^{L-2})\ldots (1+(k_2)^{\otimes (L-1)}\otimes e_2)\\
(1+e_1\otimes 1^{L-1})(1+k_1\otimes e_1\otimes 1^{L-2})\ldots (1+(k_1)^{\otimes (L-1)}\otimes e_1).
\end{multline*}
First, move the $e_2$ terms from left to right to get 
\begin{multline*}
(1+e_2\otimes 1^{L-1})(1+e_1\otimes 1^{L-1}) (1+k_2\otimes e_2\otimes 1^{L-2})(1+k_1\otimes e_1\otimes 1^{L-2}) \\
 \ldots (1+(k_2)^{\otimes (L-1)}\otimes e_2) (1+(k_1)^{\otimes (L-1)}\otimes e_1).
\end{multline*}
Due to the commutation relation $k_2e_1 = q^{-2}e_1k_2$, this produces the term
$$
\prod_{i=1}^L (q^{-2})^{1_{\{\eta_i=2\}} \sum_{j=1}^{i-1} 1_{\{\xi_j=0,\eta_j\neq 0\}}}
$$
Next, applications of the $e_1$ terms to $\xi$ yield 
$$
\prod_{i=1}^L q^{1_{\{\xi_i=0,\eta_i\neq 0\}} \sum_{j=1}^{i-1}  {\color{black} (1_{\{\xi_j=1\}} - 1_{\{\xi_j=0\}}} )}   .
$$
And then the applications of the $e_2$ yields
$$
\prod_{i=1}^L (q^{-2})^{1_{\{\eta_i=2\}}  \sum_{j=1}^{i-1}   1_{\{\xi_j=1\}}}
$$
and combining all three lines gives the result.
\end{proof}

Recall  
\begin{align*}
N_k^R(\eta) &= \left| \{j > i: \eta_j \neq 0\} \right|\\
N_k^L(\eta) &= \left| \{i < j: \eta_j \neq 0\} \right|.
\end{align*}
For $n_1<\ldots<n_r$, let $\xi^{(n_1,\ldots,n_r)}$ be the state where $\xi_{n_s}=1$ and all other $\xi_i=0$. As before, $\Omega$ is the vacuum vector. Proposition \ref{computes} immediately implies:

\begin{corollary}
We have 
$$
G(\eta):=S(\eta,\Omega) = \prod_{i=1}^L  q^{{\color{black} 1_{\{\eta_i\neq 0\}} (1-i)}}   (q^{-2})^{1_{\{\eta_i=2\}} N_i^L(\eta) } 
$$
$$
G(\xi^{(n_1,\ldots,n_r)}) = \prod_{s=1}^r q^{1-n_s}
$$
And
\begin{align*}
S(\eta,\xi^{(n_1,\ldots,n_r)}) &= \prod_{s=0}^r 1_{\{\eta_{n_s}\neq 0\}}(q^{-2})^{1_{\{\eta_s=2\}}N_{n_s}^L(\eta)}\prod_{i=n_s+1}^{n_{s+1}-1} q^{1_{\{\eta_i\neq 0\}}(2s-i+1)} (q^{-2})^{1_{\{\eta_i=2\}}N_{n_s}^L(\eta)}\\
&=1_{\{\eta_{n_1},\ldots,\eta_{n_r}\neq 0\}} \prod_{i=1}^L (q^{-2})^{1_{\{\eta_i=2\}}N_{n_s}^L(\eta)} \times \prod_{s=0}^r \prod_{i=n_s+1}^{n_{s+1}-1} q^{1_{\{\eta_i\neq 0\}}(2s-i+1)} 
\end{align*}
\end{corollary}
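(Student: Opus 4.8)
The plan is to derive all three identities by specializing the formula of Proposition \ref{computes} and then simplifying the indicator sums appearing in its exponents; no new input is needed, so the argument is purely combinatorial bookkeeping.

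First I would handle $G(\eta) = S(\eta,\Omega)$. Putting $\xi_i = 0$ for every $i$ into Proposition \ref{computes}, the factor $1_{\{\xi_i\leq\eta_i\}}$ is identically $1$, the indicator $1_{\{\xi_i=0,\eta_i\neq 0\}}$ becomes $1_{\{\eta_i\neq 0\}}$, the inner sum $\sum_{j=1}^{i-1}(1_{\{\xi_j=1\}}-1_{\{\xi_j=0\}})$ collapses to $-(i-1)=1-i$, and $\sum_{j=1}^{i-1}(1_{\{\xi_j=0,\eta_j\neq 0\}}+1_{\{\xi_j=1\}})$ collapses to $\sum_{j=1}^{i-1}1_{\{\eta_j\neq 0\}} = N_i^L(\eta)$. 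This yields the stated formula for $G(\eta)$ at once. The formula for $G(\xi^{(n_1,\ldots,n_r)})$ then follows by substituting $\eta = \xi^{(n_1,\ldots,n_r)}$ into that expression: this configuration has no site equal to $2$, so the $q^{-2}$ factors vanish and $\prod_{i=1}^L q^{1_{\{\eta_i\neq 0\}}(1-i)}$ reduces to $\prod_{s=1}^r q^{1-n_s}$.

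Next I would handle $S(\eta,\xi^{(n_1,\ldots,n_r)})$. Here $\xi=\xi^{(n_1,\ldots,n_r)}$ has $\xi_{n_s}=1$ and $\xi_i=0$ otherwise, so in Proposition \ref{computes} the constraint $1_{\{\xi_i\leq\eta_i\}}$ is automatic except at the sites $n_s$, where it forces $\eta_{n_s}\neq 0$; this produces the prefactor $1_{\{\eta_{n_1},\ldots,\eta_{n_r}\neq 0\}}$. For the power of $q$: at $i=n_s$ the indicator $1_{\{\xi_i=0,\eta_i\neq 0\}}$ vanishes, so these sites contribute nothing; for $n_s<i<n_{s+1}$ (with $n_0:=0$, $n_{r+1}:=L+1$) there are exactly $s$ indices $j<i$ lying in $\{n_1,\ldots,n_r\}$ and $i-1-s$ outside it, so $\sum_{j<i}(1_{\{\xi_j=1\}}-1_{\{\xi_j=0\}}) = s-(i-1-s)=2s-i+1$, giving the factor $q^{1_{\{\eta_i\neq 0\}}(2s-i+1)}$. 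For the power of $q^{-2}$: under the prefactor $\eta_{n_t}\neq 0$, the sum $\sum_{j<i}(1_{\{\xi_j=0,\eta_j\neq 0\}}+1_{\{\xi_j=1\}})$ again collapses to $\sum_{j<i}1_{\{\eta_j\neq 0\}}=N_i^L(\eta)$. Regrouping the product $\prod_{i=1}^L$ into the consecutive blocks $\{n_0+1,\ldots,n_1-1\}$, $\{n_1\}$, $\{n_1+1,\ldots,n_2-1\}$, $\{n_2\}$, and so on up to $\{n_r\}$, $\{n_r+1,\ldots,n_{r+1}-1\}$, gives the first displayed form, and separating the $q^{-2}$ factors from the blockwise $q$-products gives the second.

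The only delicate point is this last rearrangement: one must check the counting identity $\#\{t:n_t<i\}-\#\{j<i:\,j\notin\{n_1,\ldots,n_r\}\}=2s-i+1$ on each block $n_s<i<n_{s+1}$, treat the boundary blocks $s=0$ and $s=r$ correctly, and confirm that the inner sum governing the $q^{-2}$ exponent telescopes to $N_i^L(\eta)$ once the prefactor is imposed. All of this is elementary, so the corollary indeed follows immediately from Proposition \ref{computes}.
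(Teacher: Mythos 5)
Your proposal is correct and matches the paper exactly: the paper derives this corollary as an immediate specialization of Proposition \ref{computes} to $\xi=\Omega$ and $\xi=\xi^{(n_1,\ldots,n_r)}$, which is precisely the bookkeeping you carry out. Your careful check that the $q^{-2}$ exponent collapses to $N_i^L(\eta)$ once the prefactor $1_{\{\eta_{n_1},\ldots,\eta_{n_r}\neq 0\}}$ is imposed is the right reading of the (slightly typo-laden) displayed formula, where the subscript $n_s$ on $N_{n_s}^L(\eta)$ inside the inner product should be the running index $i$.
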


Theorem \ref{C2Duality}(1) can now be proved. Suppose that $\eta_i=2$ exactly when $i\in \{m_1,\ldots,m_l\}$ (and possibly $1$ elsewhere). Then
$$
S(\eta,\xi^{(n_1,\ldots,n_r)}) = 1_{\{\eta_{n_1},\ldots,\eta_{n_r}\neq 0\}} \prod_{k=1}^l q^{-2N_{m_k}^L(\eta)} \times \prod_{s=0}^r \prod_{i=n_s+1}^{n_{s+1}-1} q^{1_{\{\eta_i\neq 0\}}(2s-i+1)} 
$$
so that
\begin{align*}
D(\eta,\xi^{(n_1,\ldots,n_r)}) &= \frac{1}{G(\eta)}1_{\{\eta_{n_1},\ldots,\eta_{n_r}\neq 0\}} \prod_{k=1}^l q^{-2N_{m_k}^L(\eta)} \times \prod_{s=0}^r q^{n_s-1}\prod_{i=n_s+1}^{n_{s+1}-1} q^{1_{\{\eta_i\neq 0\}}(2s-i+1)} \\
&=1_{\{\eta_{n_1},\ldots,\eta_{n_r}\neq 0\}} \prod_{i=1}^L q^{1_{\{\eta_i\neq 0\}}(i-1)} \times \prod_{s=0}^r q^{n_s-1}\prod_{i=n_s+1}^{n_{s+1}-1} q^{1_{\{\eta_i\neq 0\}}(2s-i+1)} \\
&=1_{\{\eta_{n_1},\ldots,\eta_{n_r}\neq 0\}} \prod_{s=0}^r q^{2(n_s-1)}\prod_{i=n_s+1}^{n_{s+1}-1} q^{1_{\{\eta_i\neq 0\}}(2s)} \\
&=1_{\{\eta_{n_1},\ldots,\eta_{n_r}\neq 0\}} \prod_{s=1}^r q^{2(n_s-1)}  q^{2(N_{n_s}^R(\eta)-(r-s))}\\
&= q^{-2r - (r-1)r}\prod_{s=1}^r 1_{\{\eta_{n_s}\neq 0\}} q^{2n_s + 2N_{n_s}^R(\eta)}
\end{align*}
which is Theorem \ref{C2Duality}(2). 


Now consider the case when 
$$
S=exp_{q^4}\left({\DeltaL} e_2 \right).
$$
In this case, any $\xi$ will work.

\begin{lemma}
$$
S(\eta,\xi) = \prod_{i=1}^L \left( 1_{\{\xi_i = \eta_i\}} +  1_{\{\xi_i=1,\eta_i=2\}}(q^2)^{\sum_{j=1}^{i-1} 1_{\{\xi_j=2\}} - 1_{\{\xi_j=1\}} }\right)
$$
\end{lemma}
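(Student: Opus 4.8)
The plan is to read off $S(\eta,\xi) = \langle v_\eta, S v_\xi\rangle$ by factoring the operator $S = \exp_{q^4}(\Delta^{(L)} e_2)$, repeating the computation in the proof of Proposition \ref{computes} but keeping only its $e_2$ half. First I would apply Proposition \ref{pseudofac} with $e = e_2$, $k = k_2$ and $r = q^4$ (the relevant relation being $k_2 e_2 = q^{(\alpha_2,\alpha_2)} e_2 k_2 = q^4 e_2 k_2$), which gives
$$
S = \exp_{q^4}(e_2\otimes 1^{\otimes L-1})\cdot \exp_{q^4}(k_2\otimes e_2\otimes 1^{\otimes L-2})\cdots \exp_{q^4}(k_2^{\otimes L-1}\otimes e_2).
$$
On the fundamental representation $V$ the operator $e_2$ acts as the single arrow $v_4\mapsto v_2$, annihilating $v_1, v_2, v_3$ (forced by the weights, since $\alpha_2 = 2\epsilon_2$ and $(-\epsilon_2,\epsilon_2)$ is the only pair of weights of $V$ differing by $2\epsilon_2$; in particular $e_2^2 = 0$ on $V$, as already used in the proof of Proposition \ref{computes}). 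Hence every $q$-exponential truncates to $1 + (\cdot)$, and $S = F_1 F_2\cdots F_L$ with $F_j := 1 + k_2^{\otimes j-1}\otimes e_2 \otimes 1^{\otimes L-j}$, the factors ordered left to right by increasing $j$.

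Next I would expand this product applied to $v_\xi = v_{\xi_1}\otimes\cdots\otimes v_{\xi_L}$, under the identification $0\leftrightarrow v_3$, $1\leftrightarrow v_4$, $2\leftrightarrow v_2$ (and the doubly-occupied letter $\leftrightarrow v_1$). A term of the expansion is indexed by the subset $T\subseteq\{1,\dots,L\}$ of factors contributing their $e_2$-summand; because these $e_2$'s occupy distinct sites, that term is nonzero only when $\xi_j = 1$ for every $j\in T$, and then it produces the configuration obtained from $\xi$ by changing the letters in $T$ from $1$ to $2$. Pairing against $v_\eta$ therefore forces $T = T^\ast := \{i : \xi_i = 1,\ \eta_i = 2\}$ and requires $\xi$ and $\eta$ to agree off $T^\ast$ — precisely the vanishing pattern of $\prod_i\bigl(1_{\{\xi_i=\eta_i\}} + 1_{\{\xi_i=1,\eta_i=2\}}(\cdots)\bigr)$ — leaving a single surviving term. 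To evaluate its scalar, observe that the factors act on $v_\xi$ in the order $F_L, F_{L-1},\dots$, so when $F_j$ (with $j\in T^\ast$) acts the slots $i<j$ still hold their original letters $\xi_i$, and the $k_2^{\otimes j-1}$ prefactor contributes $\prod_{i<j}k_2(\xi_i)$, where $k_2$ acts by $q^2$ on $v_2$, by $q^{-2}$ on $v_4$, and by $1$ on $v_3$ and $v_1$. This equals $(q^2)^{\sum_{i<j}(1_{\{\xi_i=2\}} - 1_{\{\xi_i=1\}})}$, and multiplying over $j\in T^\ast$ yields exactly the claimed product.

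This is a routine bookkeeping argument and I do not expect a genuine obstacle. The one point requiring care is the ordering in the last step: because $F_j$ is applied while the slots strictly to its left are still untouched, the exponent of $q^2$ is a sum over the \emph{original} configuration (hence it reads $\sum_{j<i}(1_{\{\xi_j=2\}}-1_{\{\xi_j=1\}})$ as in the statement), not over a partially converted one; reversing the order of the product would instead produce $\eta$-dependent weights.
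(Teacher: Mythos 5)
Your argument is correct and is essentially the paper's own proof, just written out in full: the paper likewise factors the $q$-exponential via Proposition \ref{pseudofac}, notes that $e_2$ only fires at sites with $\xi_i=1,\eta_i=2$, and reads off the scalar from the action of $k_{(0,2)}$ on $v_2,v_4,v_3$ ($q^2$, $q^{-2}$, $1$). Your care about the left-to-right ordering (so the $k_2$ weights see the original $\xi$) is the right point to check and is handled correctly.
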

\begin{proof}
The applications of the $e_2$ only occur when $\xi_i=1,\eta_i=2$, and the lemma follows because $k_{(0,2)}$ maps $v_3$ to $v_3$, $v_4$ to $q^{-2}v_4$ and $v_2$ to $q^2v_2$.
\end{proof}

Since 
$$
G(\eta) = \prod_{i=1}^L  q^{{\color{black} 1_{\{\eta_i\neq 0\}} (1-i)}}   (q^{-2})^{1_{\{\eta_i=2\}} N_i^L(\eta) } 
$$
we have
$$
D(\eta,\xi) = \prod_{i=1}^L \left( 1_{\{\xi_i = \eta_i=1\}}q^{2(i-1)} +  1_{\{\xi_i = \eta_i=2\}}q^{2(i-1 + N^L_i(\eta) + N^L_i(\xi))}  +  1_{\{\xi_i=1,\eta_i=2\}}(q^2)^{N_i^L(\eta) + i-1 + \sum_{j=1}^{i-1} (1_{\{\xi_j=2\}} - 1_{\{\xi_j=1\}} )}\right)
$$
which simplifies to the expression in Theorem \ref{C2Duality}(1).

\section{Type $A_2$ ASEP}\label{A2}
Let $C$ be the central element of $\mathcal{U}_q(\mathfrak{gl}_3)$ from \eqref{GZBC}.
\begin{lemma}\label{A2Gen}
With respect to the basis $v_1\otimes v_1, v_2\otimes v_2,v_3\otimes v_3, v_2\otimes v_1, v_1\otimes v_2, v_3\otimes v_1,v_1\otimes v_3, v_3\otimes v_2,v_2\otimes v_3,$ the matrix of $\Delta (C)$ on $V\otimes V$ is
$$
\left(
\begin{array}{ccccccccc}
 0 & 0 & 0 & 0 & 0 & 0 & 0 & 0 & 0 \\
 0 & 0 & 0 & 0 & 0 & 0 & 0 & 0 & 0 \\
 0 & 0 & 0 & 0 & 0 & 0 & 0 & 0 & 0 \\
 0 & 0 & 0 & -q^2 & q & 0 & 0 & 0 & 0 \\
 0 & 0 & 0 & q & -1 & 0 & 0 & 0 & 0 \\
 0 & 0 & 0 & 0 & 0 & -q^2 & q & 0 & 0 \\
 0 & 0 & 0 & 0 & 0 & q & -1 & 0 & 0 \\
 0 & 0 & 0 & 0 & 0 & 0 & 0 & -q^2 & q \\
 0 & 0 & 0 & 0 & 0 & 0 & 0 & q & -1 \\
\end{array}
\right)
$$
\end{lemma}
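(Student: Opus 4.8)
The plan is to prove Lemma~\ref{A2Gen} by the same kind of explicit matrix computation used for Lemma~\ref{symmetry}, now on the $3$-dimensional fundamental representation $V$ of $\mathcal{U}_q(\mathfrak{gl}_3)$. First I would record the matrices of the generators on $V$ in the basis $v_1,v_2,v_3$ of weight vectors: $e_1=E_{12}$, $f_1=E_{21}$, $e_2=E_{23}$, $f_2=E_{32}$ and $k_{(a,b,c)}=\mathrm{diag}(q^{a},q^{b},q^{c})$, so in particular $k_1=\mathrm{diag}(q,q^{-1},1)$ and $k_2=\mathrm{diag}(1,q,q^{-1})$. Using the coproduct $\Delta(e_i)=e_i\otimes 1+k_i\otimes e_i$, $\Delta(f_i)=1\otimes f_i+f_i\otimes k_i^{-1}$, $\Delta(k_i)=k_i\otimes k_i$, I would then expand $\Delta(C)$ termwise from \eqref{GZBC}. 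The pieces $\Delta(k_{(2,0,0)})=k_{(2,0,0)}\otimes k_{(2,0,0)}$ and the analogous ones for $k_{(0,2,0)}$, $k_{(0,0,2)}$ are immediate; $\Delta(k_{(1,1,0)}e_1f_1)$ and $\Delta(k_{(0,1,1)}e_2f_2)$ are short products of two coproducts; and the one substantial term is
$$
\Delta\!\Big(k_{(1,0,1)}\,(e_1e_2-q^{-1}e_2e_1)(f_2f_1-q^{-1}f_1f_2)\Big),
$$
for which one expands $\Delta(e_1e_2-q^{-1}e_2e_1)$ and $\Delta(f_2f_1-q^{-1}f_1f_2)$ — each a sum of four terms, simplified using $k_1k_2=k_2k_1$ together with $k_ie_j=q^{(\alpha_i,\alpha_j)}e_jk_i$ and $k_if_j=q^{-(\alpha_i,\alpha_j)}f_jk_i$ — and then multiplies them. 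Summing the resulting $9\times 9$ matrices with the coefficients in \eqref{GZBC} produces the asserted matrix.

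To keep the bookkeeping short I would first use that $C$ is central, so $\Delta(C)$ commutes with $\Delta(u)$ for every $u\in\mathcal{U}_q(\mathfrak{gl}_3)$ and hence preserves the weight decomposition $V\otimes V=\bigoplus_\mu(V\otimes V)[\mu]$; this already forces $\Delta(C)$ to be block diagonal in the ordered basis of the statement, with three $1\times 1$ blocks on the one-dimensional weight spaces of weights $2\epsilon_1,2\epsilon_2,2\epsilon_3$ and three $2\times 2$ blocks on the two-dimensional weight spaces of weights $\epsilon_1+\epsilon_2,\epsilon_1+\epsilon_3,\epsilon_2+\epsilon_3$. Since $V\otimes V$ has only the two irreducible summands of highest weights $2\epsilon_1$ and $\epsilon_1+\epsilon_2$, and $v_1\otimes v_1$, $v_2\otimes v_2$, $v_3\otimes v_3$ all lie in the first one, Schur's lemma makes those three diagonal scalars equal; evaluating on $v_1\otimes v_1$, the $k$-terms and the $ef$-terms of the bracket in \eqref{GZBC} act by $-q^{-2}(q^2-1)^2(q^2+1)^2$ and $(q-q^{-1})^2(q^2+1)^2$ respectively, and since $q^{-2}(q^2-1)^2=(q-q^{-1})^2$ these cancel, giving the common value $0$ and hence the zero $3\times 3$ block. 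It then remains only to evaluate $\Delta(C)$ on the three two-dimensional weight spaces, each spanned by the pair $v_j\otimes v_i,\ v_i\otimes v_j$ listed in the statement; on each of these only finitely many terms of $\Delta(C)$ act nontrivially, and one reads off the $2\times 2$ matrix $\left(\begin{smallmatrix}-q^2 & q\\ q & -1\end{smallmatrix}\right)$ in each case.

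I expect the only real obstacle to be computational rather than conceptual: the termwise expansion of the cubic cross-term $(e_1e_2-q^{-1}e_2e_1)(f_2f_1-q^{-1}f_1f_2)$ under $\Delta$, together with the correct tracking of the $q^{\pm(\alpha_i,\alpha_j)}$ factors produced when commuting the $k_i$ past $e_j$ and $f_j$, is the step where a sign or a power of $q$ is most easily dropped. Carrying out the computation one weight space at a time, as above, reduces each of these expansions to a handful of surviving terms and makes the verification routine.
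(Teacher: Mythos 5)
Your proposal is correct and is essentially the paper's own proof: the paper likewise just records the $9\times 9$ matrices of $e_i,f_i,k_{(a,b,c)}$ on $V\otimes V$ (via the coproduct) and multiplies them out according to \eqref{GZBC}. Your additional observation that weight-space preservation plus Schur's lemma forces the three diagonal entries on $v_1\otimes v_1, v_2\otimes v_2, v_3\otimes v_3$ to coincide (and your check that the common value is $0$ on $v_1\otimes v_1$) is a valid and slightly tidier way to organize the same computation.
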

\begin{proof} By computation:
\begin{align*}
e_1 &= q^{-1}E_{42} + E_{52} + E_{14} + qE_{15} + E_{68}+E_{79}\\
f_1 &= q^{-1}E_{41} + E_{51} + E_{24}+qE_{25} + E_{86} + E_{97} \\
k_{(a,b,c)} &= \mathrm{diag}\left( q^{2a},q^{2b},q^{2c}, q^{a+b},q^{a+b},q^{a+c},q^{a+c},q^{b+c},q^{b+c}\right)\\
e_2 &=q^{-1}E_{83} + E_{93} +E_{46} + E_{57} + E_{28} + q E_{29}\\
f_2 &= q^{-1}E_{82} + E_{92} + E_{64} + E_{75} + E_{38} + qE_{39} 
\end{align*}

\end{proof}

The symmetry in this case is 
$$
 S:=\exp_{q^2}\left({\DeltaL} e_2 \right) \cdot \exp_{q^2}\left({\DeltaL} e_1\right) 
$$
\begin{proposition}\label{ExplicitS}
$$
S(\eta,\xi)=\prod_{i=1}^L q^{1_{\{\xi_i = 0, \eta_i\neq 0\}} \sum_{j=1}^{i-1} (1_{\xi_j=1} - 1_{\xi_j=0})} (q^{-1})^{1_{\{\eta_i=2,\xi_i\neq 2\}}\sum_{j=1}^{i-1} ( 1_{\{\xi_j=0, \eta_j\neq 0\}} + 1_{\{\xi_j=1\}} - 1_{\{\xi_j=2\}})}
$$
implying that
$$
G(\eta):=S(\eta,\Omega)= \prod_{i=1}^L q^{1_{\{\eta_i\neq 0\}}(1-i)}(q^{-1})^{ 1_{\{\eta_i=2\}} \sum_{j=1}^{i-1} 1_{\{\eta_j\neq 0\}} }
$$
\end{proposition}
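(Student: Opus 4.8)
The plan is to follow the proof of Proposition~\ref{computes} essentially verbatim, replacing $\mathfrak{sp}_4$ by $\mathfrak{gl}_3$; the computation is if anything shorter, since in the vector representation of $\mathfrak{gl}_3$ every Chevalley generator has matrix entries in $\{0,1\}$.

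First I would apply Proposition~\ref{pseudofac} to each of the two factors of $S$. Both simple roots of $\mathfrak{gl}_3$ are short, so $k_ie_i = q^{(\alpha_i,\alpha_i)}e_ik_i = q^2 e_ik_i$, and Proposition~\ref{pseudofac} with $r=q^2$ gives
$$
\exp_{q^2}\!\bigl(\Delta^{(L)}e_j\bigr) \;=\; \prod_{i=1}^{L}\Bigl(1 + k_j^{\otimes(i-1)}\otimes e_j \otimes 1^{\otimes(L-i)}\Bigr),\qquad j=1,2,
$$
each factor on the right having collapsed to $1+(\cdot)$ since it squares to zero (as $e_j^2=0$ on $V$). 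Thus $S$ becomes an explicit product of $2L$ elementary operators, each of the form $1+(\text{a tensor product of diagonal }k_j\text{'s})\otimes e_j$. Identifying a state $\xi\in\{0,1,2\}^L$ with the weight vector $v_{\xi_1}\otimes\cdots\otimes v_{\xi_L}$ as in Section~\ref{C2} (so that, as single-site operators, $e_1$ and $e_2$ implement the two elementary raising moves), I would apply these $2L$ operators to $v_\xi$ in turn. For a fixed target $\eta$ there is exactly one choice of which elementary operators must fire in order to produce $v_\eta$; reading this off gives both the support of $S(\cdot,\cdot)$ (it vanishes unless $\eta_i\ge\xi_i$ at every site, on which the displayed product is understood) and $S(\eta,\xi)$ as the product of the scalars accumulated along the way.

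Those scalars come from the $k$-tails, and split into the same families as in the proof of Proposition~\ref{computes}. A firing $e_2$-factor at site $i$ drags a tail $k_2^{\otimes(i-1)}$ across sites $1,\dots,i-1$, which are still in their original states $v_{\xi_j}$; since $k_2$ acts on the three weight vectors by the powers $q^{(\alpha_2,\cdot)}$, this produces the first power of $q$ in the statement. A firing $e_1$-factor at site $i$ drags a tail $k_1^{\otimes(i-1)}$ across sites $1,\dots,i-1$, which by then have been partially rewritten by the $e_2$-factors, so its tail sees the intermediate configuration rather than $\xi$; since $k_1$ acts by the powers $q^{(\alpha_1,\cdot)}$, this produces the second (negative) power, in which the term $1_{\{\xi_j=0,\eta_j\neq 0\}}$ is exactly the contribution of the sites that an $e_2$-factor has just converted. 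The $\mathfrak{gl}_3$ Chevalley generators add no further scalar (in the vector representation $e_iv_{i+1}=v_i$ with coefficient $1$), so multiplying the two families gives the stated formula for $S(\eta,\xi)$; setting $\xi=\Omega=v_3^{\otimes L}$ then kills every $\xi$-dependent term and leaves the formula for $G(\eta)$. The only genuine difficulty is the last step's bookkeeping: keeping straight the order in which the $2L$ factors act, and, for each tail $k_j^{\otimes(i-1)}$, exactly which sites to its left have already been modified when it acts. This is precisely the calculation carried out (somewhat tersely) in the proof of Proposition~\ref{computes}, now with the $\mathfrak{gl}_3$ weights $k_1\sim(q,q^{-1},1)$ and $k_2\sim(1,q,q^{-1})$ on $(v_1,v_2,v_3)$.
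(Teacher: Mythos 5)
Your proposal is correct and is exactly the paper's argument: the paper's entire proof of Proposition \ref{ExplicitS} is the single sentence that the argument is identical to that of Proposition \ref{computes}, and you have simply carried out that identical computation (factorize each $q$-exponential via Proposition \ref{pseudofac} with $r=q^2$ for both short roots, truncate each factor using $e_j^2=0$ on $V$, and read off the $k$-tail scalars, with the creation factors seeing the original configuration $\xi$ and the conversion factors seeing the intermediate one). The only caveat is inherited from the paper rather than introduced by you: with the single-site actions $e_2\colon v_3\mapsto v_2$ and $e_1\colon v_2\mapsto v_1$ read off from Lemma \ref{A2Gen}, the stated formula (and the nonvanishing of $G(\eta)=S(\eta,\Omega)$ on states containing type-$2$ particles) forces the $e_2$-exponential to act first, so one must interpret the product defining $S$ in that order (or equivalently swap the two labels), which is precisely how your bookkeeping proceeds.
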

\begin{proof}
The argument is identical to that of Proposition \ref{computes}. 
\end{proof}
Therefore we see that
\begin{theorem}
The operator
$$
\mathcal{L}:=G^{-1}A^{(L)}G
$$
is the generator of spin $1/2$ type $A_2$ ASEP on $\{1,\ldots,L\}$ with domain wall boundary conditions.

The function 
$$
D:=G^{-1}SG^{-1}
$$
is  a self--duality function explicitly given by the expression given in Theorem \ref{SDF}.
\end{theorem}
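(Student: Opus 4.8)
The plan is to carry out the scheme of Section~\ref{C2} in the simpler $A_2$ setting: the matrix of $\Delta(C)$ in Lemma~\ref{A2Gen} already has non--negative off--diagonal entries and non--positive diagonal entries, so there is no ``negative probability'' to repair and the $\epsilon$--regularization of the $C_2$ case is unnecessary --- one works with $G$ directly. First I would check that $g:=S\Omega$ lies in $\ker A^{(L)}$: the $v_3\otimes v_3$ row of the matrix in Lemma~\ref{A2Gen} is zero, so $A^{(L)}\Omega=0$, and $S=\exp_{q^2}(\Delta^{(L)}e_2)\exp_{q^2}(\Delta^{(L)}e_1)$ is a finite non--commutative polynomial in $\Delta^{(L)}(e_1)$ and $\Delta^{(L)}(e_2)$ (finite because each $e_i$ is nilpotent on $V$), each of which commutes with $A^{(L)}$ by the verbatim $\mathfrak{gl}_3$ analogue of Lemma~\ref{commutes} (valid since $\Delta(C)$ is central in $\mathcal{U}_q(\mathfrak{gl}_3)^{\otimes 2}$); hence $A^{(L)}g=S\,A^{(L)}\Omega=0$. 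By Proposition~\ref{ExplicitS} the coefficients $g(x)=S(x,\Omega)=G(x)$ are monomials in $q$, hence strictly positive, so $G$ is invertible and $\mathcal{L}=G^{-1}A^{(L)}G$, $D=G^{-1}SG^{-1}$ are well defined.

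For the generator assertion I would argue exactly as in the proof of Theorem~\ref{C2gen}. The factorization of Proposition~\ref{pseudofac} gives $G(v_{i_1}\otimes\cdots\otimes v_{i_L})=g_1\cdots g_L$ with $g_j$ depending only on the multiset $\{i_1,\dots,i_j\}$, which forces $\mathcal{L}$ into the local form $\sum_i \mathbf{1}^{\otimes i-1}\otimes H\otimes\mathbf{1}^{\otimes L-i-1}$ with $H=B^{-1}\Delta(C)B$ for a single diagonal $B$ (the position-- and history--dependence of $G$ cancelling in the relevant ratios, as in the $C_2$ computation). Conjugation by $B$ preserves the diagonal of $\Delta(C)$, and since $g\in\ker A^{(L)}$ every row of $H$ sums to $0$; these two facts determine $H$ completely from Lemma~\ref{A2Gen}, and comparison with the matrix of Lemma~\ref{generator} (under the dictionary $v_3\leftrightarrow$ empty, $\{v_1,v_2\}\leftrightarrow$ the two particle types of Section~\ref{C2}) identifies $\mathcal{L}$ --- up to the harmless overall time rescaling noted in Section~\ref{Overview} --- with spin $1/2$ type $A_2$ ASEP with $L(1,0)=L(2,0)=L(1,2)=1$, $R(1,0)=R(2,0)=R(1,2)=q^{-2}$ and domain wall boundary conditions.

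For the duality assertion, the matrix of $\Delta(C)$ in Lemma~\ref{A2Gen} is symmetric, so $A^{(L)}$ is self--adjoint for the bra--ket inner product and $(G^{-1}A^{(L)}G)^*=GA^{(L)}G^{-1}$. Using $[A^{(L)},S]=0$ and that $G$ is diagonal, $\mathcal{L}D=G^{-1}A^{(L)}SG^{-1}=G^{-1}SA^{(L)}G^{-1}=D\mathcal{L}^*$, so $\langle z,\mathcal{L}D(y)\rangle=\langle z,D\mathcal{L}^*(y)\rangle$ for all $z,y$; by Lemma~\ref{DualityProof} with $Z=Y=\{0,1,2\}^L$ this is precisely self--duality, and --- in contrast with the $C_2$ case --- no restriction of the state space is required, since Step~1 gave a genuinely non--negative $A$. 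It then remains to evaluate $D(\eta,\xi)=G(\eta)^{-1}S(\eta,\xi)G(\xi)^{-1}$ from the closed forms of Proposition~\ref{ExplicitS}: specializing $\xi$ to the configuration with type $1$ particles at $n_1<\cdots<n_r$ and type $2$ particles at $m_1<\cdots<m_{r'}$ and collecting powers of $q$ site by site --- the same telescoping that produces Theorem~\ref{C2Duality}(1)--(2) from its $S$ and $G$ --- should yield $\prod_{s=1}^r 1_{\{\eta_{n_s}=1\}}q^{2\tilde N^R_{n_s}(\eta)+2n_s}\prod_{s'=1}^{r'}1_{\{\eta_{m_{s'}}\neq 0\}}q^{2N^R_{m_{s'}}(\eta)+2m_{s'}}$, the formula of Theorem~\ref{SDF}.

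The conceptual content is light once the previous sections are in place; the two spots that will need real care are the final step --- tracking the $q$--exponents coming from the relations $k_ie_j=q^{\pm(\alpha_i,\alpha_j)}e_jk_i$ inside $S(\eta,\xi)$ and from the divisions by $G(\eta)$ and $G(\xi)$, so that they combine into exactly $2\tilde N^R_{n_s}(\eta)+2n_s$ and $2N^R_{m_{s'}}(\eta)+2m_{s'}$, which I would handle by scanning the sites of $\xi$ left to right as in Proposition~\ref{computes} --- and, in the generator step, pinning down the orientation and overall normalization so that the rates emerge as the literal numbers $1$ and $q^{-2}$ rather than merely up to a common positive factor.
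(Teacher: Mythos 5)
Your proposal follows essentially the same route as the paper: the generator statement is proved by repeating the argument of Theorem \ref{C2gen} (with the simplification, which you correctly note, that the matrix in Lemma \ref{A2Gen} already has the right sign pattern so no $\epsilon$--regularization is needed), and the duality statement reduces to $\mathcal{L}D = D\mathcal{L}^*$ via symmetry of $A^{(L)}$ and $[A^{(L)},S]=0$, followed by the site--by--site evaluation of $G^{-1}SG^{-1}$ from Proposition \ref{ExplicitS} --- the one substantive computation, which the paper carries out in full and which you correctly identify as the step needing care. (One small imprecision: $\Delta(C)$ is not central in $\mathcal{U}_q(\mathfrak{gl}_3)^{\otimes 2}$, only in the image of $\Delta$; but that is all the analogue of Lemma \ref{commutes} requires, so the argument stands.)
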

\begin{proof}
The first statement follows from an argument similar to that of Theorem \ref{C2gen}. The second statement is a direct computation using Proposition \ref{ExplicitS}.
\end{proof}

By Proposition \ref{ExplicitS}, 
\begin{align*}
D(\eta,\xi) &= \prod_{i=1}^L q^{1_{\{\xi_i = 0, \eta_i\neq 0\}} \sum_{j=1}^{i-1} (1_{\{\xi_j=1\}} - 1_{\{\xi_j=0\}})} (q^{-1})^{1_{\{\eta_i=2,\xi_i\neq 2\}}\sum_{j=1}^{i-1} ( 1_{\{\xi_j=0, \eta_j\neq 0\}} + 1_{\{\xi_j=1\}} - 1_{\{\xi_j=2\}})}  \\
&\quad \quad \times q^{1_{\{\eta_i\neq 0\}}(i-1)}q^{ 1_{\{\eta_i=2\}} \sum_{j=1}^{i-1} 1_{\{\eta_j\neq 0\}} } q^{1_{\{\xi_i\neq 0\}}(i-1)}q^{ 1_{\{\xi_i=2\}} \sum_{j=1}^{i-1} 1_{\{\xi_j\neq 0\}} }
\end{align*}
which equals $\prod_{i=1}^L f(\eta_i,\xi_i)$ where $f(\cdot,\cdot)$ equals 
\begin{align*}
1, &\text{ if } \xi_i=0, \eta_i=0\\ 
q^{\sum_{j=1}^{i-1} \left(1_{\{\xi_j=1\}} - 1_{\{\xi_j=0\}}\right)} \cdot q^{i-1}, &\text{ if } \xi_i=0,\eta_i=2 \\
q^{\sum_{j=1}^{i-1} \left(1_{\{\xi_j=1\}} - 1_{\{\xi_j=0\}}\right)} \cdot q^{-\sum_{j=1}^{i-1} ( 1_{\{\xi_j=0,\eta_j\neq 0\}} + 1_{\{\xi_j=1\}} - 1_{\{\xi_j=2\}}) } \cdot q^{i-1 + N_i^L(\eta) }, &\text{ if } \xi_i=0,\eta_i=1 \\
q^{2(i-1)}, &\text{ if } \xi_i=2,\eta_i=2\\
q^{-\sum_{j=1}^{i-1} (1_{\{\xi_j=0,\eta_j\neq 0\}} + 1_{\{\xi_j=1\}} - 1_{\{\xi_j=2\}}) + 2(i-1) + N_i^L(\eta) } , &\text{ if } \xi_i=2,\eta_i=1\\
q^{2(i-1) + N_i^L(\xi) + N_i^L(\eta)}, &\text{ if } \xi_i=1,\eta_i=1
\end{align*}
If there are $s_2$ type $2$ particles and $s$ type $1$ particles in $\xi$ to the left of $i$, then this becomes
\begin{align*}
1, &\text{ if } \xi_i=0, \eta_i=0\\ 
q^{ s_2 - (i - 1 - s_2-s   ) } \cdot q^{i-1} = q^{2s_2+s}, &\text{ if } \xi_i=0,\eta_i=2 \\
q^{ s_2 - (i - 1 - s_2-s  ) } \cdot q^{2 {r}} \cdot q^{i-1 } = q^{2s_2+3s}, &\text{ if } \xi_i=0,\eta_i=1 \\
q^{2(i-1)}, &\text{ if } \xi_i=2,\eta_i=2\\
q^{2s + 2(i-1)  } , &\text{ if } \xi_i=2,\eta_i=1\\
q^{2(i-1) + s + s_2 + N_i^L(\eta)}, &\text{ if } \xi_i=1,\eta_i=1
\end{align*}

The $q^{2s}$ term in the fifth line and $q^{s+s_2}$ in the sixth line result in a contribution from the configuration of $\xi$. If $\xi$ has a total of $r$ type $1$ particles all to the left of $r'$ type $2$ particles, then the contribution is $ q^{(r-1)r/2 +r'r}$, which is constant with respect to the dynamics of $\xi$.  Each time a type $1$ particle jumps to the right of  a type $2$ particle, the contribution is unchanged, and hence remains a constant. 

Let $\xi$ denote the particle configuration with particles of type $1$ at $n_1,\ldots,n_r$ and particles of type $2$ at $m_1,\ldots,m_{r'}$.  The sixth line yields
$$
\prod_{s=1}^{r} q^{N_{n_s}^L(\eta)} = \prod_{i=1}^L q^{1_{\{\eta_i\neq 0\}} \cdot \tilde{N}^R_i(\xi)} = \prod_{s=0}^r q^{r-s}\prod_{i = n_s + 1}^{n_{s+1}-1} q^{1_{\{\eta_i\neq 0\}} \cdot (r-s)} = \mathrm{const} \prod_{s=0}^r \prod_{i = n_s + 1}^{n_{s+1}-1} q^{1_{\{\eta_i\neq 0\}} \cdot (r-s)}
$$
This combines with the  $q^s$ and $q^{3s}$ in the second and third lines to contribute
\begin{align*}
\prod_{s=0}^r \prod_{i = n_s + 1}^{n_{s+1}-1} q^{1_{\{\eta_i\neq 0\}} \cdot (r-s)} q^{s\cdot 1_{\{\eta_i\neq 0\}}}  q^{2s\cdot 1_{\{\eta_i=1\}}} = \mathrm{const} \prod_{s=1}^r q^{2s\left( \tilde{N}^L_{n_{s+1}}(\eta) - \tilde{N}_{n_s}^L(\eta) -1  \right) }
\end{align*}
Similarly, the $2s_2$ contributes
$$
\prod_{s'=1}^{r'} \prod_{i = m_{s'} + 1}^{m_{s'+1}-1} q^{2s' \cdot 1_{\{\eta_i\neq 0\}}} = \mathrm{const} \prod_{s'=1}^{r'} q^{2s'\left( {N}^L_{m_{s'+1}}(\eta) - {N}_{m_{s'}}^L(\eta) -1  \right) }
$$
Combining the terms yields 
$$
D(\eta,\xi) = \mathrm{const}  \prod_{s=1}^r 1_{\{\eta_{n_s}=1\}}q^{2\tilde{N}^R_{n_s}(\eta)+2n_s} \prod_{s'=1}^{r'} 1_{\{\eta_{m_{s'}}\neq 0\}}q^{2N^R_{m_{s'}}(\eta)+2m_{s'}} ,
$$
which proves Theorem \ref{SDF}.

We remark that Theorem \ref{SDF} provides a formula for the $r+r'$ moments of the exponentiated current of type $A_2$ ASEP at distinct points. By following the argument in \cite{IS}, it should be possible to write the moments at any points in terms of $k$--particle evolution for $k\leq r+r'$, but this is not pursued here.

\bibliographystyle{alpha}

\begin{thebibliography}{}
\bibitem{AB} F.C. Alcaraz., R.Z. Bariev: Exact solution of asymmetric diffusions with second-class particles of arbitrary size. Braz. J. Phys. 30 (2000), 13--26.
\bibitem{AR} F.C. Alcaraz, V. Rittenberg. Reaction--diffusion processes as physical realizations of Hecke algebras. Physics Letters B \textbf{314} 377--380 (1993).
\bibitem{BS0} M. Bal\'{a}zs and T. Sepp\"{a}l\"{a}inen, 
order of current variance and diffusivity in the asymmetric simple exclusion process, Annals of Mathematics, \textbf{171} (2010), 1237--1265.
\bibitem{BS1} V. Belitsky, G.M. Sch\"{u}tz.: Diffusion and coalescence of shocks in the partially asymmetric exclusion process. Electron. J. Prob. \textbf{7}, Paper No. 11, 1--21 (2002)
\bibitem{BS} V. Belitsky, G.M. Sch{\" u}tz, Self-Duality for the Two-Component Asymmetric Simple Exclusion Process, preprint: \href{http://arxiv.org/abs/1504.05096}{arXiv:1504.05096}
\bibitem{BS2} V. Belitsky, G.M. Sch{\" u}tz, Quantum algebra symmetry and reversible measures
for the ASEP with second-class particles, preprint: \href{http://arxiv.org/abs/1504.06958v1}{arXiv:1504.06958v1}
\bibitem{BCS} A. Borodin, I. Corwin, T. Sasamoto, From duality to determinants for q-TASEP and ASEP, Annals of Probability 2014, Vol. 42, No. 6, 2314--2382.
\bibitem{C} L. Cantini, Algebraic Bethe Ansatz for the two species ASEP with different hopping rates,  J. Phys. A: Math. Theor. 41 095001 (2008)
\bibitem{CGRS} G. Carinci, C. Giardin\`{a}, F. Redig, T. Sasamoto, A generalized Asymmetric Exclusion Process with $U_q(\mathfrak{gl}_2)$ stochastic duality, preprint: \href{http://arxiv.org/abs/1407.3367}{arXiv:1407.3367}
\bibitem{CS} S. Chatterjee, G.M. Sch\"{u}tz, Determinant representation for some transition probabilities in the TASEP with second class particles, Journal of Statistical Physics, Volume 140, Number 5, 900--916 (2010).
\bibitem{CP} I. Corwin, L. Petrov, Stochastic higher spin vertex models on the line, preprint: \href{http://arxiv.org/abs/1502.07374}{arXiv:1502.07374 }
\bibitem{DE} B. Derrida, M.R. Evans, J. Phys. A: Math. Gen. 32 (1999) 4833--4850.
\bibitem{GZB} M.D. Gould, R.B. Zhang, and A.J. Bracken, Generalized Gel'fand invariants and characteristic identities for quantum groups, J. Math Phys \textbf{32} 2298 (1991).
\bibitem{IS} T. Imamura, T. Sasamoto, \textit{Current moments of 1D ASEP by duality}. Phys. \textbf{142}(5), 919--930 (2011)
\bibitem{J} J.C. Jantzen, Lectures on Quantum Groups, American Mathematical Society (1995).
\bibitem{PS} J.H.H. Perk, C.L. Schultz, New families of commuting transfer matrices in $q$--state vertex models. Phys. Lett \textbf{84A}, 407--410 (1981).
\bibitem{SS} G. Sch\"{u}tz, S. Sandow: Non-abelian symmetries of stochastic processes: derivation of correlation functions for random vertex models and disordered interact- ing many-particle systems. Phys. Rev. E \textbf{49}, 2726--2744 (1994)
\bibitem{S} G. Sch{\" u}tz, Duality relations for asymmetric exclusion processes. J. Stat. Phys. \textbf{86}(5/6), 1265--1287 (1997)
\bibitem{TW} C. Tracy, H. Widom, On the Distribution of a Second Class Particle in the Asymmetric Simple Exclusion Process, J. Phys. A: Math. Theor. 42 (2009) 425002 (6pp)
\end{thebibliography}

\end{document}